\numberwithin{equation}{section}
\numberwithin{figure}{section}
\newtheorem{theorem}{Theorem}[section]
\newtheorem{corollary}[theorem]{Corollary}
\newtheorem{proposition}[theorem]{Proposition}
\newtheorem{lemma}[theorem]{Lemma}
\theoremstyle{definition}
\newtheorem{definition}[theorem]{Definition}
\newtheorem{example}[theorem]{Example}
\newtheorem{remark}[theorem]{Remark}
\title{Transformations of partial matchings  
}
\author{Inasa Nakamura}
\address{Graduate School of Mathematical Sciences, The University of Tokyo\newline
3-8-1 Komaba, Tokyo 153-8914, Japan}
\email{inasa@ms.u-tokyo.ac.jp}
\address{Current Address: Faculty of Electrical, Information and Communication Engineering, 
Institute of Science and Engineering, 
Kanazawa University \newline
Kakumamachi, Kanazawa, 920-1192, Japan}
\email{inasa@se.kanazawa-u.ac.jp}
\subjclass[2010]{Primary 05C}
\keywords{partial matching; chord diagram; lattice; polytope}
\begin{document}  
\begin{abstract}
We consider partial matchings, which are finite graphs consisting of edges and vertices of  degree zero or one. We consider transformations between two states of partial matchings. We introduce a method of presenting a transformation between partial matchings. We introduce the notion of the lattice presentation of a partial matching, and the lattice polytope associated with a pair of lattice presentations, and we investigate transformations with minimal area. 
\end{abstract}
\maketitle

\section{Introduction}\label{sec1}
A {\it partial matching}, or a {\it chord diagram}, 
is a finite graph consisting of edges and vertices of degree zero or one, with the vertex set $\{1,2,\ldots, m\}$ for a positive integer $m$ \cite{Reidys}. A partial matching is used to present secondary structures of polymeric molecules such as RNAs (see Section \ref{sec2-1}). The aim of this paper is to establish mathematical basics for discussing partial matchings, from a viewpoint of lattice presentations. 

In this paper, we consider transformations between two states of partial matchings. 
In Section \ref{sec2}, we introduce the lattice presentation of a partial matching, and we clarify correspondence between structures of chord diagrams and lattice presentations. In Section \ref{sec3}, we introduce the notion of the lattice polytope associated with a pair of partial matchings, and we discuss the equivalence of lattice polytopes. In Section \ref{sec4}, we introduce transformations of lattice presentations and lattice polytopes, and the area of a transformation. We give a lower estimate of the area of a transformation by using the area of a lattice polytope, and in certain cases we construct  transformations with minimal area (Theorem \ref{thm3-10}). In Section \ref{sec-red}, we introduce the notion of the reduced graph of a lattice polytope, and we show that there exists a transformation of a lattice polytope with minimal area if and only if its reduced graph is an empty graph (Theorem \ref{thm-red}). Section \ref{sec4-3} is devoted to showing lemmas and propositions. In Section \ref{sec5}, we consider simple connected lattice polytopes. We show that when a lattice polytope $P$ is connected and simple, a certain division of $P$ into $n$ rectangles presents a transformation with minimal area, and in certain cases, any transformation with minimal area is presented by such a division of $P$ (Corollary \ref{cor5-2}). 

\section{Partial matchings and our motivation}\label{sec2-1}

Partial matchings present secondary structures of polymeric molecules such as RNAs. An RNA is a single-strand chain of simple units of nucleotides called nucleobases, with a backbone with an orientation from 5'-end to 3'-end, which folds back on itself. The nucleobases consist of 4 types, 
guanine (G), uracil (U), adenine (A), and cytosine (C), and there are interactions between nucleobases: adenine and uracil, guanine and cytosine, which form A-U, G-C  
 base pairs. The secondary structure of an RNA is the information of base pairs. For an RNA strand, regard each nucleobase as a vertex, and label the vertices by integers $1,2,\ldots$ from 5'-end to 3'-end, and connect two vertices forming each base pair with an edge. Then we have a chord diagram presenting the secondary structure. 
Chord diagrams have been studied to investigate RNA secondary structures, which consist of nesting structures formed by \lq\lq parallel'' bonds, and pseudo-knot structures containing \lq\lq cross-serial'' bonds. In particular, a special kind of structure called a \lq\lq $k$-noncrossing structure'' plays an important role and enumerations of $k$-noncrossing structures are investigated \cite{CDDSY, JQR, JR, Reidys}. 
Our motivation of this research was to give a new method of investigating RNA secondary structures. 

Partial matchings are used to predict the most possible forms of RNA secondary structures with optimal free energy, by means of dynamic programming, calculating energy for every possible state of partial matchings, and investigating one partial matching at a time; there are many references, for example see \cite{PM, Reidys, Tinoco-al, Zuker-Sankoff}. 
In this paper, from a different viewpoint, we 
focus on investigating paths between two fixed states of partial matchings. 

\section{Lattice presentations}\label{sec2}

In this section, we introduce the notion of the lattice presentation of a partial matching, and we see the correspondence between structures of chord diagrams and lattice presentations.

We recall the precise definition of a partial matching, or a chord diagram. 
A {\it partial matching}, or a {\it chord diagram},
is a finite graph consisting of edges and vertices of degree zero or one, with the vertex set $\{1,2,\ldots, m\}$ for a positive integer $m$. 
A chord diagram is represented by drawing the vertices $1,2,\ldots, m$ in the horizontal line and the edges in the upper half plane. We denote by $(x,y)$ ($x, y \in \{1,2,\ldots,m\}$, $x < y$) the edge connecting the vertices $x$ and $y$ and call it an {\it arc}, and we call a vertex with degree zero an {\it isolated vertex}. In this paper, we use the term \lq\lq partial matching'' when we consider its lattice presentation, and the term \lq\lq chord diagram'' when we consider the chord diagram itself.

For the $xy$-plane $\mathbb{R}^2$, 
put $C=\{(x,y)\in \mathbb{R}^2 \mid  x, y>0\}$, and 
we denote by $C_+$ (respectively $C_-$) the half plane $\{(x,y)\in C \mid x\leq y\}$ (respectively $\{(x,y)\in C \mid x\geq y\}$). We call a point $(x,y)$ of $C$ such that $x$ and $y$ are integers a {\it lattice point}. 

\begin{definition}
For a partial matching $\delta$, 
the {\it lattice presentation} $\Delta$ of $\delta$ is a set of lattice points in $C \backslash \partial C_+$ such that each element $(x,y)$ of $\Delta$ presents an arc $(x,y)$ when $x<y$ (respectively an arc $(y,x)$ when $y<x$) of $\delta$. Note that for each arc $(x,y)$ of $\delta$, we have two points $(x,y) \in C_+$ and $(y,x) \in C_-$ of $\Delta$. 
\end{definition}

\begin{example}
The left figure of Figure \ref{g-1} is a chord diagram $\delta$ with five arcs $(1, 5)$, $(2,4)$, $(3,7)$, $(6,8)$, $(10, 12)$, and two isolated vertices $9$ and $11$, and the right figure of Figure \ref{g-1} is the lattice presentation of $\delta$. 
\end{example}

\begin{figure}[ht]
\centering
\includegraphics*[height=4cm]{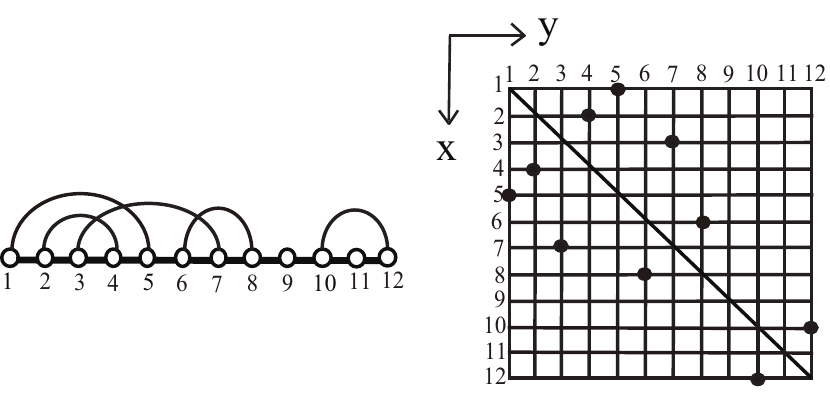}
\caption{A chord diagram (left) and the lattice presentation (right).}
\label{g-1}
\end{figure}

Partial matchings are investigated by considering special structures called $k$-nesting and $k$-noncrossing; for example, see \cite{CDDSY, JQR, JR, Reidys}. We see the correspondence of a partial matching with such a structure and the lattice presentation. 
By definition of lattice presentation of a partial matching, we have the following propositions. 
For two points $v_1, v_2$ of $C$, we denote by $R(v_1, v_2)$ the rectangle in $C$ whose diagonal vertices are $v_1$ and $v_2$.  
For a point $v=(x,y) \in C_+$, put $v^*=(y,x) \in C_-$. 
  
Two arcs $(x_1, y_1)$ and $(x_2, y_2)$ of a chord diagram are said to be {\it separated} if the intervals $[x_1, y_1]$ and $[x_2, y_2]$ in $\mathbb{R}$ are disjoint. Two arcs are said to be {\it non-separated} if they are not separated. 

\begin{proposition}\label{prop2-3}

For a lattice presentation $\Delta=\{ v_1, v_2, v_1^*,  v_2^* \}$ with $v_j \in C_+, v_j^* \in C_-$ $(j=1,2)$, the following conditions are mutually equivalent (see Figure \ref{fig-2}).

\begin{enumerate}[$(1)$]
\item
A lattice presentation $\Delta$ presents separated arcs.
\item
We have $R(v_1, v_2) \not\subset C_+$.
\item
We have $R(v_1, v_2) \cap R(v_1^*,v_2^*) \neq \emptyset$.
\end{enumerate}
\end{proposition}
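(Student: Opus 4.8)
The plan is to reduce all three conditions to a single numerical comparison between the endpoints of the two arcs, and then to observe that this comparison is forced to be strict because the four endpoints are distinct integers. Write the two arcs as $(x_1,y_1)$ and $(x_2,y_2)$ with $x_j<y_j$, so that $v_j=(x_j,y_j)$ and $v_j^*=(y_j,x_j)$. Since the arcs belong to a matching, the four numbers $x_1,y_1,x_2,y_2$ are pairwise distinct. Set $a=\min(x_1,x_2)$, $b=\max(x_1,x_2)$, $c=\min(y_1,y_2)$, $d=\max(y_1,y_2)$; then $R(v_1,v_2)$ is the axis-parallel box $[a,b]\times[c,d]$. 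First I would record the inequalities $a<c$ and $b<d$, which follow immediately from $x_1<y_1$ and $x_2<y_2$, so that the only genuinely unknown order relation among the four coordinates is that between $b$ and $c$, and distinctness gives $b\neq c$.

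Next I would translate each condition into a statement about $b$ and $c$. For condition (1), two closed intervals $[x_1,y_1]$ and $[x_2,y_2]$ are disjoint exactly when $\max(x_1,x_2)>\min(y_1,y_2)$, i.e. $b>c$; this is the usual overlap criterion for intervals, and I would just check the two sub-cases (a \emph{nesting} $x_1<x_2<y_2<y_1$ and a \emph{crossing} $x_1<x_2<y_1<y_2$) to confirm that the non-separated side gives $b<c$. For condition (2), the box $[a,b]\times[c,d]$ fails to lie in $C_+=\{x\le y\}$ precisely when its lower-right corner $(b,c)$ violates $x\le y$, i.e. $b>c$; since the corner realizing the largest $x$ and the smallest $y$ is exactly $(b,c)$, no other point of the box can break containment first.

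For condition (3) I would first note that $R(v_1^*,v_2^*)=[c,d]\times[a,b]$ is the reflection of $R(v_1,v_2)$ across the diagonal $y=x$, obtained by interchanging the two coordinate intervals. Their intersection is therefore $([a,b]\cap[c,d])\times([a,b]\cap[c,d])$, which is nonempty if and only if $[a,b]\cap[c,d]\neq\emptyset$. Using $a<c$ and $b<d$, these two intervals meet if and only if $c\le b$, i.e. $b\ge c$. The final step is the remark that $b$ is an $x$-endpoint and $c$ a $y$-endpoint, so $b\neq c$ and hence $b\ge c\iff b>c$; this collapses the non-strict inequality coming from (3) into the strict inequality of (1) and (2), and the three equivalences follow.

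I do not expect a serious obstacle: the argument is essentially a bookkeeping reduction to the single relation $b>c$. The one point that needs care, and that I would flag explicitly, is the strict-versus-non-strict discrepancy between (1)/(2) and (3); it is harmless only because the endpoints are distinct, and if coincident endpoints were allowed the three conditions would genuinely diverge at the boundary case $b=c$. I would also make clear that the rectangles are taken to be closed, since the intersection in (3) may reduce to a single boundary point.
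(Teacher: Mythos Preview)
Your argument is correct and follows essentially the same approach as the paper: both reduce (1)$\Leftrightarrow$(2) to examining the lower-right corner of $R(v_1,v_2)$ against the diagonal, and both exploit that $R(v_1^*,v_2^*)$ is the reflection of $R(v_1,v_2)$ across $\partial C_+$ for (2)$\Leftrightarrow$(3). Your use of $a,b,c,d$ in place of the paper's WLOG assumption $x_1<x_2$, and your explicit computation of the intersection for (3) rather than the paper's one-line symmetry remark, are minor expository differences; your explicit handling of the strict-versus-non-strict issue at $b=c$ is in fact cleaner than the paper's.
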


\begin{proof}
Put $v_1=(x_1, y_1)$ and $v_2=(x_2, y_2)$. Let us assume $x_1<x_2$. Then, 
two arcs $(x_1, y_1)$ and $(x_2, y_2)$ are separated if and only if $y_1<x_2$. Let us denote the vertices of the rectangle $R(v_1, v_2)$ by $v_1=(x_1, y_1), u_1=(x_1, y_2), u_2=(x_2, y_1)$, and $v_2=(x_2, y_2)$. Since $x_j<y_j$ ($j=1,2$), with the assumption $x_1<x_2$, we see that $v_1, u_1, v_2 \in C_+$, and $y_1<x_2$ if and only if $u_2=(x_2, y_1) \in C_-$, which is equivalent to the condition that $R(v_1, v_2) \not\subset C_+$. Since $R(v_1^*, v_2^*)$ is the mirror reflection of $R(v_1, v_2)$ with respect to the line $\partial C_+$, and $v_1$ and $v_2$ are not in $\partial C_+$,  $R(v_1, v_2) \not\subset C_+$ if and only if $R(v_1, v_2) \cap R(v_1^*,v_2^*) \neq \emptyset$. 
\end{proof}

\begin{figure}[ht]
\centering
\includegraphics*[height=4cm]{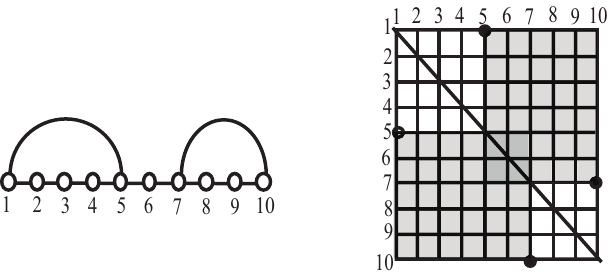}
\caption{Separated arcs of a chord diagram and the lattice presentation, where we shadow rectangles $R(v_1, v_2)$ and $R(v_1^*, v_2^*)$. For this figure, $v_1=(1,5)$ and $v_2=(7,10)$.}
\label{fig-2}
\end{figure}

For a rectangle $R(v_1, v_2)$, we say it is {\it of type I} (respectively {\it of type II, III, IV}) if the vector from $v_1$ to $v_2$, $v_2-v_1\in \mathbb{R}^2$ is in the first (respectively second, third, fourth) quadrant. 

Let $k$ be a positive integer. 
A chord diagram is called {\it $k$-nesting} if its arcs consist of $k$ distinct arcs $(x_1, y_1)$, $(x_2, y_2)$, $\ldots$, $(x_k, y_k)$ such that 
\begin{equation}\label{nest}
x_1<x_2<\cdots<x_k<y_k<y_{k-1}<\cdots<y_1, 
\end{equation}
see Figure \ref{fig-3}.

\begin{proposition}
For a lattice presentation $\Delta=\{ v_1, \ldots, v_k, v_1^*, \dots, v_k^* \}$ with $v_j \in C_+, v_j^* \in C_-$ $(j=1,\dots, k)$, the following conditions are mutually equivalent.

\begin{enumerate}[$(1)$]
\item
A lattice presentation $\Delta$ presents a $k$-nesting chord diagram.

\item
Each rectangle $R(v_{i}, v_{j})$ is of type II or IV for $i,  j=1,\dots, k, i \neq j$. 

\item
Changing the indices if necessary, we have the following: $R(v_{j}, v_{j+1})$ is of type IV for each $j=1,\dots, k-1$.
\end{enumerate}
\end{proposition}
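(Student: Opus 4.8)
The plan is to translate each of the three conditions into elementary inequalities among the coordinates of the points $v_j=(x_j,y_j)$ and then close the cycle of implications $(1)\Rightarrow(3)\Rightarrow(2)\Rightarrow(1)$. The dictionary I would set up first is this: since $v_j\in C_+\setminus\partial C_+$ we have $x_j<y_j$; and by the definition of type, $R(v_i,v_j)$ is of type IV exactly when $v_j-v_i$ lies in the fourth quadrant, i.e.\ $x_i<x_j$ and $y_i>y_j$, while it is of type II exactly when $x_i>x_j$ and $y_i<y_j$. Hence ``type II or IV'' for the pair $\{i,j\}$ is the single symmetric condition $(x_i-x_j)(y_i-y_j)<0$, which I will use repeatedly. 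I would also record at the outset that, because the endpoints of distinct arcs are distinct vertices of $\{1,\dots,m\}$, all of $x_1,\dots,x_k,y_1,\dots,y_k$ are distinct, so every inequality below is strict and the orderings are unambiguous.

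For $(1)\Rightarrow(3)$, assuming the chord diagram is $k$-nesting I reindex the points so that the chain \eqref{nest}, namely $x_1<x_2<\cdots<x_k<y_k<\cdots<y_1$, holds. Then for each $j$ one has $x_j<x_{j+1}$ and $y_j>y_{j+1}$, which by the dictionary says precisely that $R(v_j,v_{j+1})$ is of type IV. For $(3)\Rightarrow(2)$, from the consecutive type-IV conditions $x_j<x_{j+1}$ and $y_j>y_{j+1}$ for all $j$, transitivity of $<$ gives $x_1<\cdots<x_k$ and $y_1>\cdots>y_k$ simultaneously; hence for any pair with, say, $i<j$ we get $x_i<x_j$ and $y_i>y_j$, so $R(v_i,v_j)$ is type IV (and $R(v_j,v_i)$ type II), and every rectangle is of type II or IV.

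For $(2)\Rightarrow(1)$, which I expect to carry the real content, the condition $(x_i-x_j)(y_i-y_j)<0$ says each pair is ``anti-sorted.'' Taking a pair with $x_i<x_j$, anti-sorting gives $y_i>y_j$, and combining with $x_j<y_j$ (which holds because $v_j\in C_+$) yields $x_i<x_j<y_j<y_i$, that is, arc $j$ is nested inside arc $i$. So pairwise anti-sorting coincides with pairwise nesting. The step to verify carefully is that this pairwise relation upgrades to the single global chain required by $(1)$: the containment relation among the arcs is a total order, since any two arcs are comparable by the above, so ordering the arcs from outermost to innermost and reindexing reproduces exactly $x_1<\cdots<x_k<y_k<\cdots<y_1$.

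The main obstacle is thus not any single inequality but making the passage from ``every pair is anti-sorted'' to ``there is one coherent global ordering'' airtight. The crucial input preventing a type-I or type-III configuration (which would correspond to crossing or separated arcs rather than nested ones) from sneaking in is precisely the constraint $x_j<y_j$ for each arc, so I would make that hypothesis explicit at this step and confirm that it rules out the degenerate orderings.
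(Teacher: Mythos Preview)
Your proof is correct and rests on the same coordinate dictionary as the paper's: both arguments reduce each condition to the inequalities $x_i<x_j$, $y_i>y_j$ and invoke $v_j\in C_+$ (i.e.\ $x_j<y_j$) to pass between ``type II/IV'' and the nesting chain. The only organizational difference is that the paper verifies $(1)\Leftrightarrow(2)$ and $(1)\Leftrightarrow(3)$ separately, whereas you close the cycle $(1)\Rightarrow(3)\Rightarrow(2)\Rightarrow(1)$; your extra step $(3)\Rightarrow(2)$ via transitivity is routine, and your ``upgrade to a global chain'' in $(2)\Rightarrow(1)$ can be made completely explicit by simply sorting the $x_i$ (the anti-sorting hypothesis then forces the $y_i$ into the reverse order, and $x_{\sigma(k)}<y_{\sigma(k)}$ supplies the middle inequality).
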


\begin{proof}
Put $v_j=(x_j, y_j)$ ($j=1,\dots, k$). 
The relation (\ref{nest}) is equivalent to $x_{i}<x_{j}<y_{j}<y_{i}$ when $i<j$ (respectively $x_{j}<x_{i}<y_{i}<y_{j}$ when $i>j$). Since $v_i, v_j \in C_+$, this is equivalent to $x_{i}<x_{j}$ and $y_{j}<y_{i}$ when $i<j$ (respectively $x_{j}<x_{i}$ and $y_{i}<y_{j}$ when $i>j$), which is equivalent to the condition that $R(v_{i}, v_{j})$ is of type IV when $i<j$ (respectively of type II when $i>j$). Thus the conditions 1 and 2 are equivalent. 
Again, the relation (\ref{nest}) is equivalent to $x_{j}<x_{j+1}<y_{j+1}<y_{j}$ for $j=1,\dots, k-1$, which is equivalent to the condition that $R(v_{j}, v_{j+1})$ is of type IV for each $j=1,\dots, k-1$. 
Thus the conditions 1 and 3 are equivalent. 
\end{proof}
\begin{figure}[ht]
\centering
\includegraphics*[height=4cm]{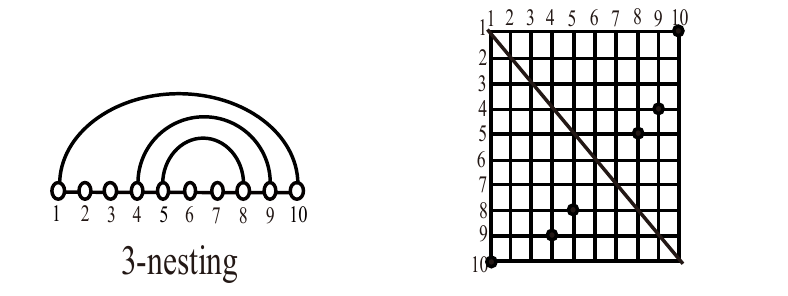}
\caption{A  nesting  chord diagram and the lattice presentation.}
\label{fig-3}
\end{figure}
 
A chord diagram is called {\it $k$-crossing} if its arcs consist of $k$ distinct arcs $(x_1, y_1), (x_2, y_2), \ldots, (x_k, y_k)$ such that 
\begin{equation}\label{crossing}
x_1<x_2<\cdots<x_k<y_1<y_2<\cdots<y_k, 
\end{equation}
see Figure \ref{fig-4}. 
A chord diagram is called {\it $k$-noncrossing} if there exists no $k$-crossing subgraph. 

\begin{proposition}\label{prop2-5}
For a lattice presentation $\Delta=\{ v_1, \ldots, v_k, v_1^*, \dots, v_k^* \}$ with $v_j \in C_+, v_j^* \in C_-$ $(j=1,\dots, k)$, the following conditions are mutually equivalent.

\begin{enumerate}[$(1)$]
\item
A lattice presentation $\Delta$ presents a $k$-crossing chord diagram. 

\item
Each rectangle $R(v_{i}, v_{j})$ is of type I or III and contained in $C_+$ for $i,  j=1,\dots, k, i \neq j$. 

\item
Changing the indices if necessary, we have the following: $R (v_1, v_k)$ is contained in $C_+$ and $R(v_{j}, v_{j+1})$ is of type I for each $j=1,\dots, k-1$.
\end{enumerate}
\end{proposition}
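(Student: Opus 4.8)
The plan is to follow the template of the preceding proposition, translating the chain of inequalities (\ref{crossing}) into statements about the quadrant-type of each rectangle $R(v_i,v_j)$ and its containment in $C_+$. Writing $v_j=(x_j,y_j)$, with $x_j<y_j$ since $v_j\in C_+\setminus\partial C_+$, I would first record the elementary reformulation that, with this indexing, relation (\ref{crossing}) holds precisely when for every pair $i<j$ one has simultaneously $x_i<x_j$, $y_i<y_j$, and $x_j<y_i$. Indeed the orderings $x_1<\cdots<x_k$ and $y_1<\cdots<y_k$ are exactly the first two families of inequalities, while the single condition $x_k<y_1$ separating all left endpoints from all right endpoints is, given those orderings, equivalent to $x_j<y_i$ for all $i<j$.

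For the equivalence of (1) and (2), note that for $i<j$ the vector $v_j-v_i=(x_j-x_i,\,y_j-y_i)$ lies in the first quadrant exactly when $x_i<x_j$ and $y_i<y_j$, i.e. $R(v_i,v_j)$ is of type I; for $i>j$ the reversed vector gives type III. The containment $R(v_i,v_j)\subset C_+$ is governed by the corner $(x_j,y_i)$ of largest abscissa and smallest ordinate, so it is equivalent to $x_j\le y_i$, and hence, using that the endpoints of a chord diagram are distinct, to the strict inequality $x_j<y_i$. Conjoining these observations over all pairs shows that condition (2) is equivalent to the reformulation above, hence to (\ref{crossing}), which is condition (1). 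Conversely one must remember that ``type I or III for all pairs'' forces the $x$-ordering and the $y$-ordering of the points to coincide, so that a single reindexing realises both $x_1<\cdots<x_k$ and $y_1<\cdots<y_k$.

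Finally, for the equivalence of (1) and (3) I would argue that (2) plainly implies (3) after the above reindexing, since consecutive rectangles $R(v_j,v_{j+1})$ are then of type I and $R(v_1,v_k)\subset C_+$. For the reverse implication, the hypothesis that each $R(v_j,v_{j+1})$ is of type I yields $x_j<x_{j+1}$ and $y_j<y_{j+1}$ for all $j$, and telescoping these gives $x_1<\cdots<x_k$ and $y_1<\cdots<y_k$; the assumption $R(v_1,v_k)\subset C_+$ then supplies $x_k\le y_1$, whence $x_k<y_1$ by distinctness, and (\ref{crossing}) follows. The main obstacle I anticipate is not any single computation but the bookkeeping of strictness: the containment statements yield only non-strict inequalities, and one must invoke the fact that a chord diagram has distinct endpoints (degree at most one at each vertex) to upgrade $x_k\le y_1$ to the strict $x_k<y_1$ required by (\ref{crossing}). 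The secondary point requiring care is that the reduction from ``all pairs'' in (2) to ``consecutive pairs plus the extreme rectangle'' in (3) is exactly a transitivity argument, which the telescoping of the consecutive type-I conditions makes transparent.
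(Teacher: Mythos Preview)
Your proof is correct and follows essentially the same approach as the paper's: both unwind (\ref{crossing}) into the pairwise inequalities $x_i<x_j$, $y_i<y_j$, $x_j<y_i$ for $i<j$, identify the first two with the type of $R(v_i,v_j)$ and the third with containment in $C_+$, and then reduce (3) to the consecutive inequalities plus $x_k<y_1$. You are in fact somewhat more careful than the paper on two points---the distinction between $x_j\le y_i$ (containment) and $x_j<y_i$ (distinct endpoints), and the explicit telescoping for (3)$\Rightarrow$(1)---but these are refinements of the same argument rather than a different route.
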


\begin{proof}
Put $v_j=(x_j, y_j)$ ($j=1,\dots, k$). 
The relation (\ref{crossing}) is equivalent to $x_{i}<x_{j}<y_{i}<y_{j}$ when $i<j$ (respectively $x_{j}<x_{i}<y_{j}<y_{i}$ when $i>j$). 
This is equivalent to $x_{i}<x_{j}$, $y_{i}<y_{j}$, and $x_{j}<y_{i}$ when $i<j$ (respectively $x_{j}<x_{i}$, $y_{j}<y_{i}$, and $x_i<y_j$ when $i>j$). When $i<j$, $x_{i}<x_{j}$ and $y_{i}<y_{j}$ if and only if $R(v_{i}, v_{j})$ is of type I, and since $R(v_i, v_j)$ is a rectangle whose vertices are $(x_i, y_i), (x_i, y_j), (x_j, y_i)$ and $(x_j, y_j)$, with the condition $x_i<x_j$ and $y_i<y_j$, we see that $x_j<y_i$ if and only if $R(v_i, v_j)$ is contained in $C_+$. Thus, by the same argument, we see that the condition 1 is equivalent to the condition that $R(v_{i}, v_{j})$ is of type I when $i<j$ (respectively of type III when $i>j$) and contained in $C_+$. Thus the conditions 1 and 2 are equivalent. 
Again, the relation (\ref{crossing}) is equivalent to $x_{j}<x_{j+1}$ and $y_{j}<y_{j+1}$ for $j=1,\dots, k-1$, and $x_k<y_1$.  Then, by the same argument, we see that the conditions 1 and 3 are equivalent. 
\end{proof}

\begin{figure}[ht]\centering
\includegraphics*[height=4cm]{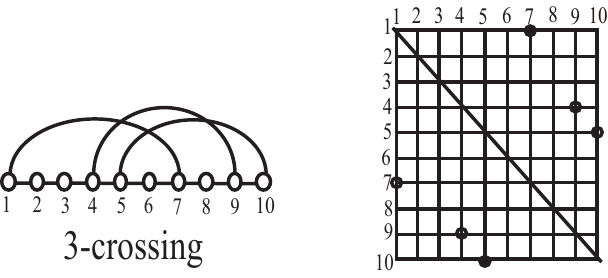}
\caption{A crossing chord diagram and the lattice presentation.}
\label{fig-4}
\end{figure}

\section{Lattice polytopes}\label{sec3}
In this section, we give the definition of the lattice polytope associated with two partial matchings. 
We take the standard basis $\mathbf{e}_1, \mathbf{e}_2$ of the $xy$-plane $\mathbb{R}^2$, where $\mathbf{e}_1=(1,0)$ and $\mathbf{e}_2=(0,1)$. 
For two distinct points $v, w$ of $\mathbb{R}^2$, we denote by $\overline{vw}$ the segment connecting $v$ and $w$. We say a segment $\overline{vw}$ is {\it in the $x$-direction} or {\it in the $y$-direction} if $w=v+x\mathbf{e}_1$ or $w=v+y\mathbf{e}_2$ respectively for some $x,y \in \mathbb{R}$: we say a segment is in the $x$-direction or in the $y$-direction if it is parallel to the $x$-axis or the $y$-axis, respectively. We denote by $R(v, w)$ the rectangle in $\mathbb{R}^2$ whose diagonal vertices are $v$ and $w$. 
For a point $v=(x_1,y_1)$ of $\mathbb{R}^2$, we call $x_1$ (respectively $y_1$) the {\it $x$-component} (respectively the {\it $y$-component}) of $v$. 
For a set of points of $\mathbb{R}^2$, $\{ v_1, \ldots, v_n\}$  with $v_j=(x_j, y_j)$ ($j=1,\dots, n$), we call the set $\{ x_1, \ldots, x_n, y_1,\ldots, y_n\}$, the {\it set of $x$, $y$ components} of $\{ v_1,\ldots, v_n\}$. 
The set of $x$, $y$-components of a lattice presentation $\Delta$ with $n$ arcs is a set $X$ of $2n$ distinct points of $\mathbb{R}$, which, as a multiset, consists of two copies of $X$. 

\begin{definition}
A {\it lattice polytope} is a polytope $P$ consisting of a finite number of vertices of degree zero (called {\it isolated vertices}) with multiplicity $2$, vertices of degree $2$ with multiplicity $1$, edges, and faces satisfying the following conditions. 

\begin{enumerate}[$(1)$]
\item
Each edge is either in the $x$-direction or in the $y$-direction. 

\item
The $x$-components (respectively $y$-components) of isolated vertices and edges in the $x$-direction (respectively $y$-direction)  are distinct. 

\item
The boundary $\partial P$ is equipped with a coherent orientation as an immersion of a union of several circles, where we call the union of edges of $P$ the {\it boundary} of $P$, and denote it by $\partial P$. 
\end{enumerate}
 
The set of vertices are divided to two sets $X_0$ and $X_1$ 
such that each edge in the $x$-direction is oriented from a vertex of $X_0$ to a vertex of $X_1$, and isolated vertices are both in $X_0$ and $X_1$ with multiplicity 1. We denote $X_0$ (respectively $X_1$) by $\mathrm{Ver}_0(P)$ (respectively $\mathrm{Ver}_1(P)$), and call it the set of {\it initial vertices} (respectively {\it terminal vertices}). 
\end{definition}

In graph theory, a lattice polytope is defined as a polytope whose vertices are lattice points \cite{Barvinok2, Diestel}, but in this paper, when we say that $P$ is a lattice polytope, we assume that each edge of $P$ is either in the $x$-direction or in the $y$-direction. 

For a lattice polytope $P$, we denote by $-P^*$ the orientation-reversed mirror reflection of $P$ with respect to the line $x=y$.

\begin{proposition}\label{prop1}
Let $\Delta, \Delta'$ be two lattice presentations with the same set of $x$, $y$-components. 
Then, $\Delta$ and $\Delta'$ form a pair of lattice polytopes $P, -P^*$ satisfying that each edge is either in the $x$-direction or in the $y$-direction such that it  connects a point of $\Delta$ and a point of $\Delta'$. See Figure \ref{fig3-1}. 
\end{proposition}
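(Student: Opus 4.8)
The plan is to build the connecting figure explicitly from the coordinate data, verify the three axioms of a lattice polytope, and then use the reflection across $x=y$ to split it into the mirror pair. The starting observation is numerical: because for each arc $(x,y)$ the lattice presentation contains exactly the two points $(x,y)\in C_+$ and $(y,x)\in C_-$, the multiset of $x$-components of $\Delta$ and the multiset of its $y$-components each coincide with the set $X$ of the $2n$ arc-endpoints, every value occurring once; by hypothesis the same set $X$ arises from $\Delta'$. Hence for every $a\in X$ there is a unique point of $\Delta$ and a unique point of $\Delta'$ with $x$-component $a$, and likewise for $y$-components.

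Second, I would construct the edges. For each $a\in X$, join the unique point of $\Delta$ with $x$-component $a$ to the unique point of $\Delta'$ with $x$-component $a$; this segment is in the $y$-direction. For each $b\in X$, join the unique points of $\Delta$ and $\Delta'$ with $y$-component $b$; this segment is in the $x$-direction. If a point lies in $\Delta\cap\Delta'$ then both of its segments degenerate and it is recorded as an isolated vertex of multiplicity $2$; every other point of $\Delta\cup\Delta'$ receives exactly one $x$-edge and one $y$-edge, hence has degree $2$. Axiom (1) holds by construction, and axiom (2) holds because the indexing value $a$ (resp.\ $b$) ranges over the distinct set $X$: no two $y$-edges share a vertical line, no two $x$-edges share a horizontal line, and the isolated vertices occupy the remaining lines without collision.

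Third, I would fix the global structure and orientation. The figure is bipartite between $\Delta$ and $\Delta'$ and, away from the isolated vertices, is $2$-regular with the two edges at each vertex of opposite directions; therefore each component is a closed rectilinear curve, and its enclosed regions furnish the faces. Orienting every $x$-edge from its $\Delta$-endpoint to its $\Delta'$-endpoint and every $y$-edge from $\Delta'$ to $\Delta$ makes in-degree and out-degree equal to $1$ at every vertex, so the boundary is a coherently oriented immersion of circles (axiom (3)), with $\mathrm{Ver}_0=\Delta$ and $\mathrm{Ver}_1=\Delta'$.

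Finally, I would bring in the reflection $\sigma\colon(x,y)\mapsto(y,x)$. Each of $\Delta,\Delta'$ is $\sigma$-invariant, and $\sigma$ carries the $y$-edge on the line $x=a$ to the $x$-edge on the line $y=a$, so the whole figure is $\sigma$-invariant while $\sigma$ interchanges the two edge-directions and reverses the orientation just chosen. The two points $(x,y),(y,x)$ of a single arc are swapped by $\sigma$, and tracing the traversal shows that they always lie in different components (the first coordinates of the $\Delta$-vertices of one component form only half of the associated alternating cycle of arcs); hence no component is $\sigma$-invariant and the components fall into mirror pairs. Choosing one component from each pair yields a lattice polytope $P$ whose orientation-reversed mirror image is precisely the union of the remaining components, namely $-P^*$, and together they recover the figure, giving the asserted pair. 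I expect the main obstacle to lie in this last step: making the cycle and coherent-orientation argument fully rigorous, and in particular proving that $\sigma$ fixes no component so that the clean splitting into $P$ and $-P^*$ — with the correct multiplicity bookkeeping for the isolated vertices — genuinely exists.
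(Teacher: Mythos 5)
Your proposal is correct and follows essentially the same route as the paper's proof: take $\Delta\cup\Delta'$ as the vertex set, join the unique pair of points sharing each $x$-component and each $y$-component (degenerate pairs becoming isolated vertices), note the resulting graph is $2$-regular with alternating edge directions and hence a coherently oriented union of closed curves, and split it via the reflection across $x=y$. The one step you flag as the main obstacle—that no component is invariant under the reflection, so the components genuinely fall into mirror pairs—is a point the paper's proof passes over in silence, and your alternating-cycle sketch does close it (a component traverses an alternating cycle of arcs choosing exactly one of the two lattice points of each arc, so its mirror image uses the complementary points and is therefore a different component; alternatively, a reflection-invariant component would carry an orientation-reversing automorphism of an even cycle fixing no vertex and stabilizing no edge, which is impossible).
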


\begin{definition}\label{def3}
We call a lattice polytope $P$ as in Proposition \ref{prop1} the {\it lattice polytope associated with lattice presentations} $\Delta$ and $\Delta'$, and denote it by $P(\Delta, \Delta')$. Note that we have a choice of $P$. 

We denote by $\mathrm{Ver}_0(P)$ (respectively $\mathrm{Ver}_1(P)$) the vertices of $P$ coming from $\Delta$ (respectively $\Delta'$), which consist of a half of the elements of $\Delta$ (respectively $\Delta'$). 
We give an orientation of $P$ by giving each edge in the $x$-direction (respectively in the $y$-direction) the orientation from a vertex of $\mathrm{Ver}_0(P)$ to a vertex of $\mathrm{Ver}_1(P)$ (respectively from a vertex of $\mathrm{Ver}_1(P)$ to a vertex of $\mathrm{Ver}_0(P)$). 
\end{definition}

\begin{figure}[ht]
\centering
\includegraphics*[height=4cm]{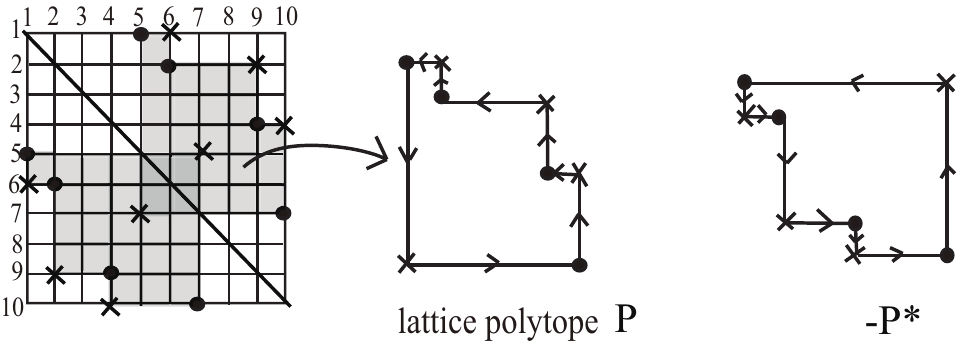}
\caption{The lattice polytope associated with $\Delta$ and $\Delta'$, where $\Delta$ is presented by black circles and $\Delta'$ is presented by X marks.}
\label{fig3-1}
\end{figure}

\begin{proof}[Proof of Proposition \ref{prop1}]
We take the set of points $\Delta$ and $\Delta'$ as the set of vertices. 
Since the set of $x$, $y$-components is a set of $2n$ distinct points in $\mathbb{R}$, for each point $v$ of $\Delta$, $v$ is either an isolated vertex satisfying $v=v'$ for $v' \in \Delta'$, or there are exactly two points $v'$ and $w'$ of $\Delta'$ such that $\overline{vv'}$ and $\overline{vw'}$ is in the $x$-direction and the $y$-direction, respectively. The same argument implies the same thing for each point of $\Delta'$. Thus we have a pair of lattice polytopes $P, -P^*$ satisfying the required conditions. That $-P^*$ is the mirror reflection of $P$ with respect to the line $x=y$ follows from the fact that 
for each point of $\Delta$ or $\Delta'$, its mirror reflection is also a point of $\Delta$ or $\Delta'$.  
\end{proof}

Since the sets of $x$, $y$-components of vertices of $P$ and $-P^*$ are the same, and the union of the vertices of $P\cup (-P^*)$ form $\Delta \cup \Delta'$, the set of $x$, $y$-components of vertices of $P$ is the same with that of $\Delta$ and $\Delta'$. Since each edge of a lattice polytope is either in the $x$-direction or the $y$-direction, the sets of $x$, $y$-components of $\mathrm{Ver}_i(P)$ ($i=0,1$) is the same. Thus the set of $x$, $y$-components of $\mathrm{Ver}_i(P)$ ($i=0,1$) is the same with that of $\Delta$ and $\Delta'$, and we have the following.

\begin{remark}\label{rem3-3}
Let $n$ be the number of the vertices of $\mathrm{Ver}_i(P)$ ($i=0,1$). 
Then, the set $X$ of $x$, $y$-components of $\mathrm{Ver}_i(P)$ ($i=0,1$) is the same with that of $\Delta$ and $\Delta'$, and it consists of $2n$ distinct elements.  
Thus the multiset of $x$, $y$-components of 
the vertices of $P$ is the set of two copies of $X$. 

Conversely, for a pair of sets $V_0$ and $V_1$ of points of $\mathbb{R}^2$ with the same set of $x$, $y$-components consisting of $2n$ distinct elements of $\mathbb{R}$, 
there is a unique lattice polytope $P$ such that $\mathrm{Ver}_i(P)=V_i$ $(i=0,1)$. 
\end{remark}

\begin{remark}
For a lattice polytope $P$, we denote by $-P$ the lattice polytope obtained from $P$ by orientation-reversal. Then, for a lattice polytope $P=P(\Delta, \Delta')$ associated with lattice presentations $\Delta$ and $\Delta'$,  
$-P(\Delta, \Delta')=P(\Delta', \Delta)$. 
Further, since we equipped $P$ with orientation such that each edge in the $x$-direction (respectively in the $y$-direction) is oriented  from a vertex of $\mathrm{Ver}_0(P)$ to a vertex of $\mathrm{Ver}_1(P)$ (respectively from a vertex of $\mathrm{Ver}_1(P)$ to a vertex of $\mathrm{Ver}_0(P)$), the orientation changes when we rotate $P$ by $\pi/2$:  
the $\pi/2$-rotation of unoriented $P$ is as a new polytope $-\rho(P)$, where $\rho(P)$ is the $\pi/2$-rotation of oriented $P$. Similarly,  
the mirror reflection of unoriented $P$ is as a new polytope $-P^*$, the orientation-reversed mirror reflection of $P$. 
\end{remark}

We introduce the equivalence relation among lattice polytopes.  
Let $P$ be a lattice polytope with $2n$ vertices. 
Since the set of $x$, $y$-components of $\mathrm{Ver}_i(P)$ 
($i=0,1$) consists of $2n$ distinct elements, for  $\mathrm{Ver}_i(P)=\{v_1, \ldots, v_n\}$ with $v_j=(x_j, y_j)$ ($j =1,2,\ldots,n$)  satisfying $x_1<x_2 <\ldots<x_n$, 
we take an element $\sigma$ of the symmetric group $\mathfrak{S}_n$ of $n$ elements determined by 
$y_{\sigma^{-1}(1)}<y_{\sigma^{-1}(2)}<\ldots<y_{\sigma^{-1}(n)}$. We denote the element $\sigma$ of $\mathfrak{S}_n$ by $\sigma_i(P)$ $(i=0,1)$. 
Let $\pi$ be a permutation \[
\pi=\begin{pmatrix} 1 & 2 & \ldots & n-1& n \\
n & n-1 & \ldots& 2 & 1\end{pmatrix} \in \mathfrak{S}_n. \]

\begin{proposition}\label{prop3-4}
Let $\sigma$ be an element of $\mathfrak{S}_n$, which will be  identified with a set of points of $\mathbb{R}^2$, $\{ (j,\sigma(j))-(n/2, n/2) \mid j=1,2,\ldots,n\}$. Then, the mirror reflection of $\sigma$ with respect to the line $x=n+1$ is $\pi\sigma$, and the $\pi/2$ rotation of $\sigma$ is $\sigma^{-1}\pi$.
\end{proposition}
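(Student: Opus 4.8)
The plan is to realize $\sigma$ literally as the \emph{permutation-matrix point set} $P_\sigma=\{(j,\sigma(j))-(n/2,n/2)\mid 1\le j\le n\}$ and to compute the image of $P_\sigma$ under each prescribed rigid motion, then recognize that image as $P_\tau$ for the claimed permutation $\tau$. The role of subtracting the center is exactly that a mirror reflection or a $\pi/2$ rotation must carry the set of relevant coordinates to itself; this pins the reflection axis and the rotation center to the common center of the configuration, so that with this normalization the motions act as symmetries of the bounding square and genuinely permute $P_\sigma$. Thus the preliminary step is to record the algebraic identities $(\pi\sigma)(j)=n+1-\sigma(j)$, $(\sigma\pi)(j)=\sigma(n+1-j)$, and $\pi^2=\mathrm{id}$, which will be used to read off $\tau$.

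For the reflection, I would write it as an affine involution and apply it to a generic point $(j,\sigma(j))-(n/2,n/2)$. The mechanism is that reflecting the value coordinate replaces $\sigma(j)$ by $n+1-\sigma(j)$ while fixing the index $j$, which is precisely postcomposition $\sigma\mapsto\pi\sigma$ (whereas reflecting the index coordinate instead reverses $j\mapsto n+1-j$ and realizes the precomposition $\sigma\mapsto\sigma\pi$). After re-expressing the image as an indexed set $\{(k,\tau(k))-(n/2,n/2)\}$ by the substitution $j\mapsto n+1-j$, the identity $(\pi\sigma)(j)=n+1-\sigma(j)$ closes the computation and yields $\tau=\pi\sigma$.

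For the rotation it is cleanest not to brute-force the map $(x,y)\mapsto(-y,x)$ but to factor the $\pi/2$ rotation as a composite of two reflections. Reflection across the diagonal $x=y$ is the transpose of the permutation matrix, hence realizes $\sigma\mapsto\sigma^{-1}$; composing it with the reflection of the previous step gives $\sigma\xrightarrow{\text{value reflection}}\pi\sigma\xrightarrow{\text{diagonal reflection}}(\pi\sigma)^{-1}=\sigma^{-1}\pi^{-1}=\sigma^{-1}\pi$, using $\pi^{-1}=\pi$. Since the value axis and the diagonal meet at $45^\circ$, their composite is a $\pi/2$ rotation, matching the claim. I would cross-check this against the direct coordinate computation $(x,y)\mapsto(-y,x)$ to be sure the sense of the rotation agrees with the convention fixed earlier for $\rho(P)$.

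The main obstacle is bookkeeping rather than depth. Because $P_\sigma$ is an \emph{unordered} set, identifying the image as some $P_\tau$ requires a correct change of index; and one must be careful about (i) which side of $\sigma$ the reversal $\pi$ multiplies and whether an inverse appears, (ii) the precise axis and center forced by the chosen normalization, and (iii) the sense of the $\pi/2$ rotation, so that both the signs and the left/right placement of $\pi$ come out consistent with the stated formulas $\pi\sigma$ and $\sigma^{-1}\pi$.
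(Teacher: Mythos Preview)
Your proposal is correct. For the reflection, you and the paper carry out essentially the same verification, though the paper phrases it via permutation matrices (right multiplication by the matrix of $\pi$ permutes the columns) while you work directly with the point coordinates. One small wrinkle in your write-up: once you reflect the value coordinate $\sigma(j)\mapsto n{+}1-\sigma(j)$ with $j$ fixed, no substitution $j\mapsto n{+}1-j$ is needed to read off $\tau=\pi\sigma$; that reindexing belongs to the index-coordinate reflection, which as you correctly note yields $\sigma\pi$ instead. Keep the two cases separate in the final version.

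For the rotation your route genuinely differs from the paper's. The paper applies $(x,y)\mapsto(-y,x)$ directly and then reindexes by hand: it sets $x_j'=-y_{\sigma^{-1}(n+1-j)}$, $y_j'=x_{\sigma^{-1}(n+1-j)}$, checks $x_1'<\cdots<x_n'$, and extracts the permutation $\rho$ from the relation $y_j'=x_{\sigma^{-1}\pi(j)}$, obtaining $\rho^{-1}=\pi\sigma$ and hence $\rho=\sigma^{-1}\pi$. Your factorization of the $\pi/2$ rotation as the composite of the value-axis reflection ($\sigma\mapsto\pi\sigma$) with the diagonal reflection ($\sigma\mapsto\sigma^{-1}$) bypasses all of that bookkeeping and reduces the claim to $(\pi\sigma)^{-1}=\sigma^{-1}\pi^{-1}=\sigma^{-1}\pi$. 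This is cleaner and makes the group-theoretic structure visible; the paper's direct computation, on the other hand, is self-contained and pins down the sense of rotation explicitly. Your proposed cross-check against $(x,y)\mapsto(-y,x)$ is exactly what is needed to confirm that the order of the two reflections matches the paper's counterclockwise convention for $\rho(P)$.
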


\begin{definition}
Let $P, P'$ be lattice polytopes 
consisting of $2n$ vertices. 
Then, we say $P$ and $P'$ are {\it equivalent} if $(\sigma_0(P), \sigma_1(P))$ and $(\sigma_0(P'), \sigma_1(P'))$, or $(\sigma_0(P), \sigma_1(P))$ and $((\sigma_0(P'))^{-1}, (\sigma_1(P'))^{-1})$, are related by right or left multiplication by the permutation $\pi$.
\end{definition}

\begin{proof}[Proof of Proposition \ref{prop3-4}]
Let $S$ be a matrix presenting $\sigma$. 
The matrix presenting $\pi$, which will be denoted by $P$, is 
\[P=\begin{pmatrix} 0  & \cdots &0 & 1\\
0  & \cdots & 1 & 0\\
 \vdots & \ddots & \vdots\\
1  & \cdots & 0 &0\end{pmatrix}.
\]
Since the right multiplication by $P$ changes the $j$th column of the operated matrix to the $(n-j)$th column $(j=1,2,\ldots,n)$, $SP$  presents the mirror reflection of $\sigma$ with respect to the line $x=n+1$. Since the order of a product of matrices is the reversed order of a product of the presented elements of $\mathfrak{S}_n$, we see that $\pi \sigma$ is the mirror reflection of $\sigma$ with respect to the line $x=n+1$. 

Let $\{(x_j, y_j) \mid j=1,2,\ldots,n\}$ ($x_1<x_2<\ldots<x_n$) be the set of points of $\mathbb{R}^2$ identified with $\sigma$. Then, we have $y_{\sigma^{-1}(1)}<y_{\sigma^{-1}(2)}<\ldots<y_{\sigma^{-1}(n)}$. Let us rotate $\sigma$ by $\pi/2$. Then, the set of points changes to $\{(-y_j, x_j) \mid j=1,2,\ldots,n\}$. 
Put $x_j'=-y_{\sigma^{-1}(n+1-j)}$ and $y_j'=x_{\sigma^{-1}(n+1-j)}$. Let $\rho$ be the element of $\mathfrak{S}_n$ presenting the $\pi/2$ rotation of $\sigma$. Since $-y_{\sigma^{-1}(n)}<-y_{\sigma^{-1}(n-1)}<\ldots<-y_{\sigma^{-1}(1)}$, $x'_1<x'_2<\ldots<x'_n$, hence we see that  $y'_{\rho^{-1}(1)}<y'_{\rho^{-1}(2)}<\ldots<y'_{\rho^{-1}(n)}$. Since $y_j'=x_{\sigma^{-1}(n+1-j)}=x_{\sigma^{-1}\pi(j)}$, $x_j=y'_{\pi \sigma(j)}$ ($j=1,2,\ldots,n$). Thus $\rho^{-1}=\pi \sigma$, and we see $\rho=\sigma^{-1}\pi$.  
\end{proof}

In particular, by Proposition \ref{prop3-4} and by definition, we have the following. 
For a lattice polytope $P$, we call a subgraph of $P$ whose boundary is homeomorphic to an immersed circle a {\it component} of $P$, and 
we say a lattice polytope is {\it connected} if it consists of one component. 

\begin{proposition}
For a lattice polytope $P$,  
$P \sim -P ^*$. 
Thus, a connected lattice polytope $P$ associated with lattice presentations of partial matchings is unique up to equivalence. 
\end{proposition}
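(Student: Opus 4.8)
The plan is to evaluate the pair $(\sigma_0(-P^*),\sigma_1(-P^*))$ and to recognize it as the componentwise inverse of $(\sigma_0(P),\sigma_1(P))$; the equivalence $P\sim -P^*$ then drops out of the second alternative in the definition of equivalence. Thus the whole argument reduces to the single identity $\sigma_i(-P^*)=(\sigma_i(P))^{-1}$ for $i=0,1$.

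First I would pin down the two vertex classes of $-P^*$. Let $r$ be the reflection $(x,y)\mapsto(y,x)$ in the line $x=y$. Because $r$ interchanges the $x$-direction with the $y$-direction, and because the orientation convention ties edge direction to the vertex classes (an $x$-edge runs from $\mathrm{Ver}_0$ to $\mathrm{Ver}_1$, a $y$-edge from $\mathrm{Ver}_1$ to $\mathrm{Ver}_0$), the mirror reflection of $P$ has its two classes interchanged relative to $P$. Reversing the orientation, which by the identity $-P(\Delta,\Delta')=P(\Delta',\Delta)$ also interchanges the two classes, swaps them back. Hence $\mathrm{Ver}_i(-P^*)=r(\mathrm{Ver}_i(P))$ for $i=0,1$. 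I would confirm this by carrying one $x$-edge of $P$ through the reflection and the orientation reversal and checking that a $\mathrm{Ver}_0$-vertex is returned to the $\mathrm{Ver}_0$-class.

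Next I would read off the permutation. Write $\mathrm{Ver}_i(P)=\{v_1,\ldots,v_n\}$ with $v_j=(x_j,y_j)$ and $x_1<\cdots<x_n$, so that $\sigma:=\sigma_i(P)$ is determined by $y_{\sigma^{-1}(1)}<\cdots<y_{\sigma^{-1}(n)}$. Ordering the reflected set $r(\mathrm{Ver}_i(P))=\{(y_j,x_j)\}$ by its first coordinate lists it as $(y_{\sigma^{-1}(k)},x_{\sigma^{-1}(k)})$ for $k=1,\ldots,n$; since $x_1<\cdots<x_n$, the induced order of the second coordinates shows that the associated permutation is $\sigma^{-1}$. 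Therefore $\sigma_i(-P^*)=(\sigma_i(P))^{-1}$. (Equivalently: reflection in the line $x=y$ sends the permutation attached to any configuration of points to its inverse.)

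It remains to invoke the definition of equivalence and the construction of the associated polytope. Applying inversion, $((\sigma_0(-P^*))^{-1},(\sigma_1(-P^*))^{-1})=(\sigma_0(P),\sigma_1(P))$, so these two pairs coincide and the second alternative in the definition of equivalence holds; hence $P\sim -P^*$. For the concluding assertion, by Proposition \ref{prop1} and Definition \ref{def3} the lattice polytope associated with $\Delta$ and $\Delta'$ is determined only up to the noted choice, which for a connected polytope is exactly the choice between $P$ and $-P^*$; as these are equivalent, the associated connected lattice polytope is unique up to equivalence. I expect the one genuine obstacle to be the bookkeeping of the second paragraph—making sure the orientation convention forces $-P^*$ to preserve, rather than swap, the classes $\mathrm{Ver}_0$ and $\mathrm{Ver}_1$; once that is secured, the remaining steps are a short computation and a direct appeal to the definitions.
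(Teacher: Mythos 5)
Your proposal is correct and is essentially the paper's own argument: the paper's proof is a one-line appeal to Proposition \ref{prop3-4} and the definition of equivalence, and the content of that appeal is precisely your identity $\sigma_i(-P^*)=(\sigma_i(P))^{-1}$, which you establish by a direct computation rather than by composing the vertical-line reflection and the $\pi/2$-rotation of Proposition \ref{prop3-4} (either way one gets a pair related to $(\sigma_0(P),\sigma_1(P))$ by inversion and possibly left/right multiplication by $\pi$, so the same conclusion follows). Your explicit check that $-P^*$ preserves, rather than swaps, the classes $\mathrm{Ver}_0$ and $\mathrm{Ver}_1$ fills in a step the paper leaves implicit, and it is genuinely needed, since the definition of equivalence does not allow interchanging $\sigma_0$ and $\sigma_1$.
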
 

\section{Describing transformations of partial matchings by lattice polytopes}\label{sec4}

In this section, we consider transformations of lattice presentations of partial matchings. 
In Section \ref{sec4-1}, we introduce transformations of lattice presentations and lattice polytopes. In Section \ref{sec4-2}, we introduce the area of a transformation and we give a lower estimate of the area of a transformation and in certain cases construct a transformation with minimal area (Theorem \ref{thm3-10}). Lemmas and Propositions are shown in Section \ref{sec4-3}. 

\subsection{Transformations of lattice presentations and lattice polytopes}\label{sec4-1}
For two arcs $a_1=(x_1, y_1)$ and $a_2=(x_2, y_2)$ of a chord diagram, we consider new arcs $a_1'=(x_1', y_1')$ and $a_2'=(x_2', y_2')$, where $\{x_1', x_2', y_1', y_2'\}=\{x_1, x_2, y_1, y_2\}$ with $\{a_1, a_2\} \neq \{a_1', a_2'\}$. We consider a new chord diagram obtained from exchanging $a_1, a_2$ to $a_1', a_2'$. We call the new chord diagram the result of a {\it transformation} between two arcs $a_1$ and $a_2$.

For a lattice presentation of a chord diagram, we define 
a {\it transformation} as the operation presenting a transformation of the chord diagram. 
For two lattice presentations $\Delta$ and $\Delta'$ with the same set of $x, y$-components, we define a {\it transformation from $\Delta$ to $\Delta'$} as a sequence of transformations  such that the initial and the terminal lattice presentations are $\Delta$ and $\Delta'$, respectively. We denote it by $\Delta \to \Delta'$. 

\begin{definition}
Let $\Delta$ be a set of lattice points. For a point $u$ of $\mathbb{R}^2$, we denote by $x(u)$ and $y(u)$ the $x$ and $y$-components of $u$ respectively.  For two distinct points $v, w$ of $\Delta$, 
we consider the rectangle $R(v, w)$ one pair of whose diagonal vertices are $v$ and $w$. Put $\tilde{v}=(x(w), y(v))$ and $\tilde{w}=(x(v), y(w))$, which form the other pair of diagonal vertices of $R(v,w)$. Then, consider a new set of lattice points obtained from $\Delta$, from removing $v,w$ and adding $\tilde{v}, \tilde{w}$. 
We call the new set of lattice points the result of a {\it transformation of $\Delta$ by the rectangle} $R(v, w)$, and denote it by $t( \Delta, R(v, w))$. 

For two sets of lattice points $\Delta$ and $\Delta'$ with the same set of $x, y$-components, we define a {\it transformation from $\Delta$ to $\Delta'$} as a sequence of transformations by rectangles such that the initial and the terminal lattice points are $\Delta$ and $\Delta'$  respectively. We will denote it by $\Delta \to \Delta'$. 
\end{definition}

Recall that for a lattice polytope $P$, we denote by $-P^*$ the orientation-reversed mirror reflection of $P$ with respect to the line $x=y$.

\begin{lemma}\label{lem4-2}
A transformation between two arcs of a chord diagram is presented by a transformation of the presenting lattice presentation $\Delta$ by rectangles $R(v,w)$ and $-R(v,w)^*$  
for $v,w \in \Delta$. 
\end{lemma}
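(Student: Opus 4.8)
The plan is to reduce the statement to a direct, case-by-case verification after observing that a transformation between two arcs is, combinatorially, nothing more than a repartition of four endpoints. First I would fix the two arcs $a_1 = (x_1, y_1)$ and $a_2 = (x_2, y_2)$ with $x_i < y_i$, and record that, since each vertex of a partial matching has degree at most one, the four endpoints $x_1, y_1, x_2, y_2$ are pairwise distinct. In the lattice presentation $\Delta$ these two arcs contribute exactly four points: $p_1 = (x_1, y_1)$ and $p_2 = (x_2, y_2)$ in $C_+$, together with their mirrors $p_1^* = (y_1, x_1)$ and $p_2^* = (y_2, x_2)$ in $C_-$. A transformation replaces the unordered pairing $\{x_1, y_1\}, \{x_2, y_2\}$ by a different pairing of the same four points, and there are precisely two such nontrivial pairings, namely $\{x_1, x_2\}, \{y_1, y_2\}$ and $\{x_1, y_2\}, \{x_2, y_1\}$. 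It therefore suffices to realize each of these two repartitions by a pair of rectangle transformations.

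For the pairing $\{x_1, y_2\}, \{x_2, y_1\}$ I would take $v = p_1$ and $w = p_2$, both in $C_+$. A direct computation of $\tilde v = (x(w), y(v))$ and $\tilde w = (x(v), y(w))$ shows that the transformation of $\Delta$ by $R(p_1, p_2)$ deletes $p_1, p_2$ and inserts $(x_2, y_1)$ and $(x_1, y_2)$. Since $-R(p_1, p_2)^*$ is, as an unoriented rectangle, $R(p_1^*, p_2^*)$, transforming by it deletes $p_1^*, p_2^*$ and inserts their mirrors $(y_1, x_2)$ and $(y_2, x_1)$. The four inserted points split into two mirror pairs, and hence form the lattice presentation of the arcs with endpoint sets $\{x_2, y_1\}$ and $\{x_1, y_2\}$, which is exactly the target transformation. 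For the pairing $\{x_1, x_2\}, \{y_1, y_2\}$ I would instead take $v = p_1 \in C_+$ and $w = p_2^* \in C_-$: the same computation shows that $R(p_1, p_2^*)$ deletes $p_1, p_2^*$ and inserts $(x_1, x_2)$ and $(y_2, y_1)$, while its mirror $-R(p_1, p_2^*)^* = R(p_1^*, p_2)$ deletes $p_1^*, p_2$ and inserts the mirror points $(x_2, x_1)$ and $(y_1, y_2)$, which together form the lattice presentation of the arcs with endpoint sets $\{x_1, x_2\}$ and $\{y_1, y_2\}$.

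Finally I would check that the two rectangle transformations can legitimately be performed in succession: the points deleted by $R(v,w)$ lie on one side of $\partial C_+$ and are disjoint from those deleted by $-R(v,w)^*$, and none of the newly inserted points coincides with a point still to be removed (using $x_i<y_i$ to rule out the degenerate equalities), so the two operations act on disjoint point sets and the composite is a well-defined transformation of $\Delta$. The only real content is the bookkeeping ensuring that combining a rectangle transformation with its mirror restores the mirror symmetry $v \in \Delta \Leftrightarrow v^* \in \Delta$ that characterizes a lattice presentation: a single rectangle transformation breaks this symmetry, and the point of the lemma is that applying $R(v,w)$ together with $-R(v,w)^*$ exactly repairs it. That verification is the crux, and the rest is the elementary enumeration above.
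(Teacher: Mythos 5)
Your proof is correct, and it establishes exactly what the lemma asserts; the difference from the paper lies in how the case analysis is organized and discharged. The paper classifies the pair of arcs by its geometric type --- separated, nesting, or crossing --- reduces to the three transitions separated $\to$ nesting, nesting $\to$ crossing, crossing $\to$ separated, and then verifies each by pointing to Figure \ref{fig4-1}. You instead classify by the combinatorics of the endpoint repartition: the original pairing $\{x_1,y_1\},\{x_2,y_2\}$ can move only to $\{x_1,y_2\},\{x_2,y_1\}$ or to $\{x_1,x_2\},\{y_1,y_2\}$, and you realize the first by $R(p_1,p_2)$ (both diagonal vertices in $C_+$) and the second by $R(p_1,p_2^*)$ (diagonal vertices on opposite sides of $\partial C_+$), with the mirror rectangles handling the $C_-$ halves. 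What your route buys: it is figure-free and fully explicit, it needs only two cases instead of three (each of your pairings covers several geometric transitions at once), and it records the bookkeeping the paper leaves silent --- that the rectangle and its mirror act on disjoint point sets, that the inserted points stay off the diagonal, and that applying $R(v,w)$ together with $-R(v,w)^*$ restores the mirror symmetry characterizing a lattice presentation. What the paper's route buys: by keeping the separated/nesting/crossing trichotomy in the foreground, it stays aligned with Proposition \ref{prop2-3} and with the later use of this lemma in Corollary \ref{cor4-12}, where one needs to know that a used rectangle disjoint from its mirror forces a nesting $\leftrightarrow$ crossing move; that information is immediate from the geometric case split, whereas it would have to be re-extracted from your coordinate computations.
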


\begin{proof}
Since two arcs of a chord diagram are either separated, nesting, or crossing, 
it suffices to consider the following three cases: (1) $a_1, a_2$ are separated and $a_1', a_2'$ are nesting, (2) $a_1, a_2$ are nesting and $a_1', a_2'$ are crossing, and (3) $a_1, a_2$ are crossing and $a_1', a_2'$ are separated. 
Since the transformation is described as in Figure \ref{fig4-1}, we have the required result. 
\end{proof}

\begin{figure}
\centering
\includegraphics*[height=8cm]{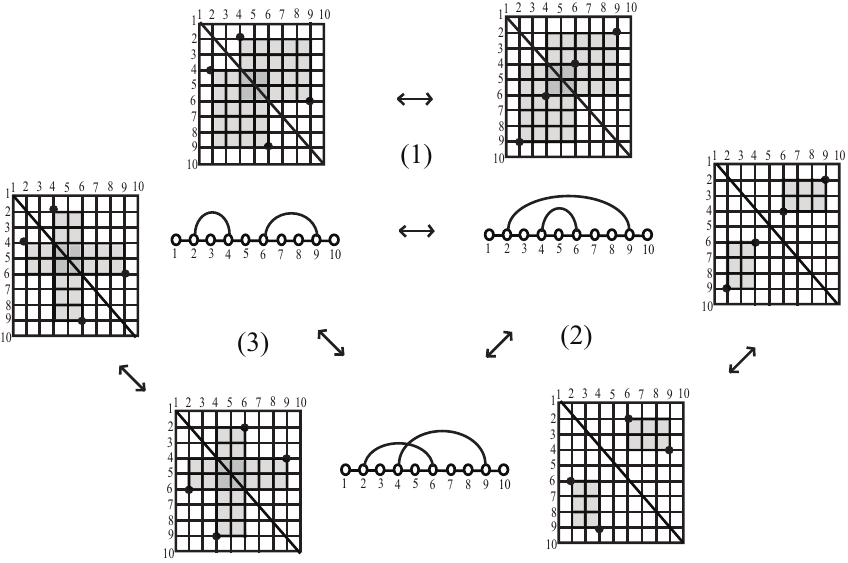}
\caption{Transformations between two arcs and the presenting transformations of  lattice presentations.}
\label{fig4-1}
\end{figure}

Let $P$ be a lattice polytope.  
Then, for $v, w \in \mathrm{Ver}_0(P)$, consider a new set of vertices obtained from $\mathrm{Ver}_0(P)$ by a transformation by $R(v,w)$. 
Since the multiset of $x$, $y$-components are preserved, the  resulting new vertices 
and $\mathrm{Ver}_1(P)$ form a new lattice polytope (see Remark \ref{rem3-3}). 

\begin{definition}
For a lattice polytope $P$ and $v,w \in \mathrm{Ver}_0(P)$, 
we call the lattice polytope $P'$ determined by the lattice points $\mathrm{Ver}_0(P')=t(\mathrm{Ver}_0(P), R(v, w))$ and $\mathrm{Ver}_1(P')=\mathrm{Ver}_1(P)$ 
the result of a {\it transformation of $P$ by the rectangle} $R=R(v, w)$ and denote it by $t(P , R)$. 

For two lattice polytopes $P$ and $P'$ with the same set of $x, y$-components, we define a {\it transformation from $P$ to $P'$} as a sequence of transformations by rectangles $R_j$ such that the initial and the terminal lattice polytopes are $P$ and $P'$, respectively. We denote it by $P \to P'$. 
\end{definition}

For a pair of lattice polytopes $P$ and $-P^*$, and rectangles $R$ and $-R^*$, we denote $t(P , R ) \cup t(-P^*, -R^* )$ by $t(P\cup (-P^*) , R \cup (-R ^*))$, and call it the result of a {\it transformation of $P\cup (-P^*)$ by rectangles} $R \cup (-R ^*)$. 
Note that $t(-P^*, -R^*)=-t(P, R)^*$ and $t(P\cup (-P^*) , R \cup (-R ^*))=t(P , R ) \cup (-t(P, R )^*)$. 

\begin{definition}
For two pairs of lattice polytopes $P, -P^*$ and $P', -P'^*$ with the same set of $x, y$-components, we define a {\it transformation from $P\cup (-P^*)$ to $P'\cup (-P'^*)$} as a sequence of transformations by rectangles $R_j \cup (-R_j^*)$ such that the initial and the terminal lattice polytopes are $P\cup (-P^*)$ and $P'\cup (-P'^*)$, respectively, and denote it by $P\cup (-P^*) \to P'\cup (-P'^*)$. 
\end{definition}

For lattice presentations of partial matchings $\Delta, \Delta'$ with the same set of $x, y$-components, and an associated lattice polytope $P$, 
$\Delta=\mathrm{Ver}_0(P \cup  (-P^*))$, $\Delta'=\mathrm{Ver}_1(P \cup  (-P^*))$. 
Thus, by Lemma \ref{lem4-2}, we have the following. 
When we consider transformations of lattice polytopes, we regard isolated vertices  $\mathrm{Ver}_1(Q)$ as a lattice polytope $Q'$ whose initial and terminal vertices are the isolated vertices: $\mathrm{Ver}_0(Q')=\mathrm{Ver}_1(Q')=\mathrm{Ver}_1(Q)$. 

\begin{proposition}
Let $\Delta$, $\Delta'$ be lattice presentations of partial matchings with the same set of $x, y$-components, and let $P$ be a lattice polytope associated with $\Delta$ and $\Delta'$. 
Then, a transformation $\Delta \to \Delta'$ is described by a transformation of lattice  
polytopes $P \cup (-P^*) \to \mathrm{Ver}_1(P \cup  (-P^*))$. 

In particular, a transformation of a lattice  
polytope $P \to \mathrm{Ver}_1(P)$ presents a transformation $\Delta \to \Delta'$. 
\end{proposition}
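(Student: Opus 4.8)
The plan is to translate a transformation $\Delta \to \Delta'$, which by definition is a sequence of arc transformations of the underlying chord diagrams, into the language of lattice polytopes by applying Lemma \ref{lem4-2} to each step, and then to observe that the resulting sequence is precisely a transformation of $P \cup (-P^*)$ whose terminal object is $\mathrm{Ver}_1(P \cup (-P^*))$. I would begin by recording the two identities that frame everything, both noted just before the statement: $\Delta = \mathrm{Ver}_0(P \cup (-P^*))$ and $\Delta' = \mathrm{Ver}_1(P \cup (-P^*))$. Thus the source $\Delta$ is exactly the initial-vertex set of the pair $P \cup (-P^*)$, and the target $\Delta'$ is exactly its terminal-vertex set.

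Next, given a transformation $\Delta \to \Delta'$, I would expand it as a sequence of transformations between arcs. Lemma \ref{lem4-2} guarantees that each such arc transformation is presented by a transformation of the lattice presentation by a pair of rectangles $R(v,w) \cup (-R(v,w)^*)$ with $v,w \in \Delta$. Applying this step by step yields a sequence of rectangle transformations $R_j \cup (-R_j^*)$. Since each such transformation acts only on the initial vertices and leaves $\mathrm{Ver}_1$ fixed, and since the multiset of $x,y$-components is preserved at every stage, Remark \ref{rem3-3} ensures that every intermediate set of initial vertices, together with the unchanged terminal vertices, determines a lattice polytope. Hence the sequence is a transformation of lattice polytopes in the sense of the preceding definitions, starting at $P \cup (-P^*)$.

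Finally I would verify the endpoint. After the full sequence the initial-vertex set has been carried from $\Delta$ to $\Delta'$, while the terminal-vertex set remains $\Delta' = \mathrm{Ver}_1(P \cup (-P^*))$. Under the convention recorded immediately before the statement, regarding the common vertex set as the degenerate polytope with $\mathrm{Ver}_0 = \mathrm{Ver}_1 = \mathrm{Ver}_1(P \cup (-P^*))$, this is exactly the asserted transformation $P \cup (-P^*) \to \mathrm{Ver}_1(P \cup (-P^*))$. For the \emph{in particular} clause, I would invoke the identity $t(P \cup (-P^*), R \cup (-R^*)) = t(P, R) \cup (-t(P, R)^*)$ noted above: a transformation $P \to \mathrm{Ver}_1(P)$ by rectangles $R_j$ automatically produces the mirrored transformation of $-P^*$ by $-R_j^*$, hence a transformation $P \cup (-P^*) \to \mathrm{Ver}_1(P \cup (-P^*))$, which by the first part presents $\Delta \to \Delta'$.

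There is no deep difficulty here; the content is essentially the bookkeeping that matches the arc-level Lemma \ref{lem4-2} to the polytope-level definitions. The one point requiring care is the endpoint identification: one must check that reaching $\Delta'$ as an initial-vertex set coincides with the target $\mathrm{Ver}_1(P \cup (-P^*))$ under the degenerate-polytope convention, and that every intermediate stage is a genuine lattice polytope, which is precisely where the conclusion of Remark \ref{rem3-3} is needed.
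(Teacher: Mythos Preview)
Your proposal is correct and follows essentially the same approach as the paper: the paper simply notes the identities $\Delta=\mathrm{Ver}_0(P\cup(-P^*))$, $\Delta'=\mathrm{Ver}_1(P\cup(-P^*))$ and then states that the proposition follows from Lemma~\ref{lem4-2}, leaving the bookkeeping implicit. Your version spells out that bookkeeping (the step-by-step application of Lemma~\ref{lem4-2}, the role of Remark~\ref{rem3-3}, the endpoint identification, and the mirrored-rectangle identity for the ``in particular'' clause), but the underlying argument is the same.
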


\begin{example}
We consider lattice presentations $\Delta$ and $\Delta'$ as in Figure \ref{fig3-1}, where we denote the points of $\Delta=\mathrm{Ver}_0(P \cup  (-P^*))$ by black circles and the points of $\Delta'=\mathrm{Ver}_1(P  \cup  (-P^*))$ by X marks.  In Figure \ref{fig4-2}, we give an example of a transformation $\Delta \to \Delta'$, described by a transformation of lattice  
polytopes $P \cup (-P^*) \to \mathrm{Ver}_1(P \cup  (-P^*))$, where we denote by shadowed rectangles the used rectangles. In this case, it suffices to see a transformation $P \to \mathrm{Ver}_1(P)$, which induces a transformation $P \cup (-P^*) \to \mathrm{Ver}_1(P \cup  (-P^*))$. 
 
\begin{figure}[ht]
\centering
\includegraphics*[height=6cm]{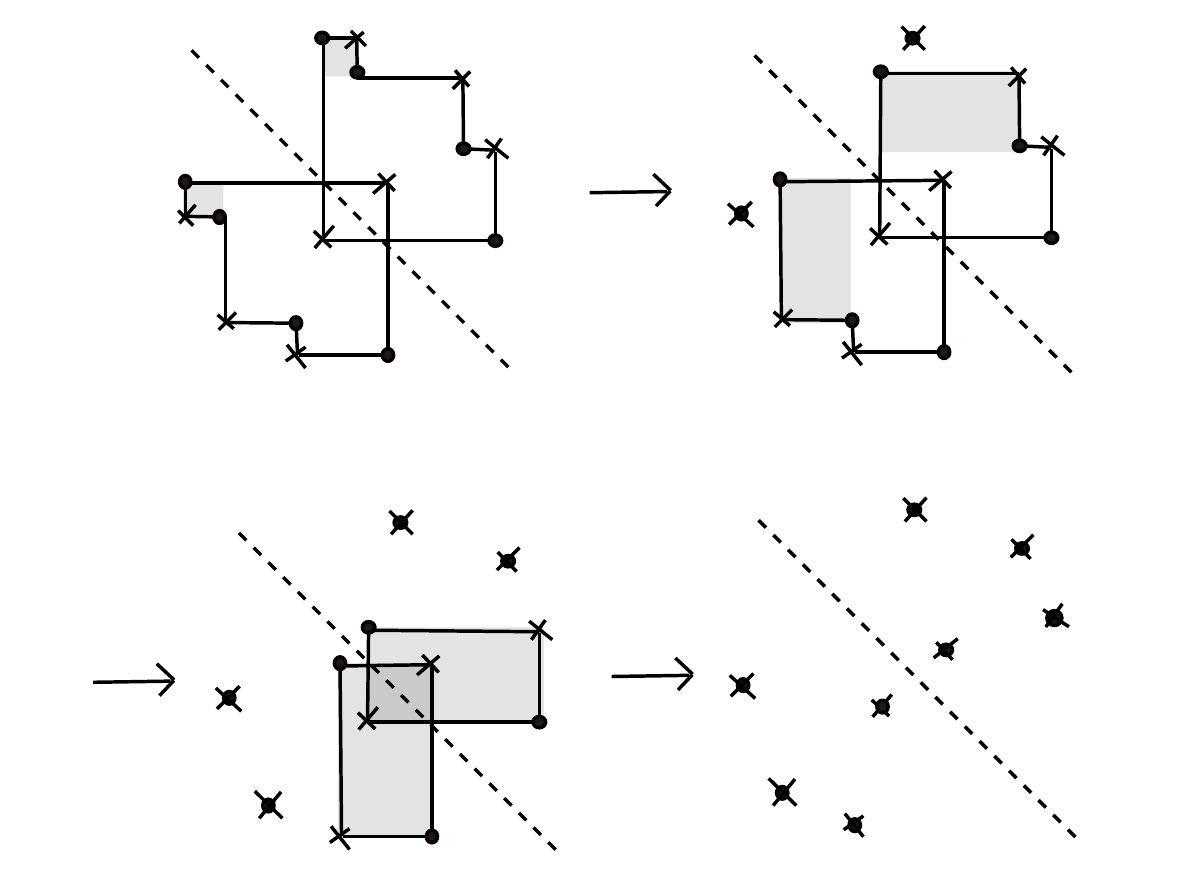}
\caption{Transformation of lattice polytopes $P \cup (-P^*)$ associated with lattice presentations of partial matchings, where the vertices of $\mathrm{Ver}_0(P\cup (-P^*))$ (respectively $\mathrm{Ver}_1(P\cup (-P^*))$) are indicated by black circles (respectively X marks)}
\label{fig4-2}
\end{figure}
\end{example}

\subsection{Areas of transformations}\label{sec4-2}

For a lattice polytope $P$, let us define the area of $|P|$, $\mathrm{Area}|P|$, as follows. 
Recall that for a lattice polytope $P$, 
we give an orientation of $P$ by giving each edge in the $x$-direction (respectively in the $y$-direction) the orientation from a vertex of $\mathrm{Ver}_0(P)$ to a vertex of $\mathrm{Ver}_1(P)$ (respectively from a vertex of $\mathrm{Ver}_1(P)$ to a vertex of $\mathrm{Ver}_0(P)$). Then, $\partial P$ has a coherent orientation as an immersion of a union of several circles. 
The space $\mathbb{R}^2$, which contains $P$, is divided into several regions $A_1,\ldots, A_m$ by $\partial P$. For each region $A_i$ ($i=1,\dots,m$), let $\omega(A_i)$ be the {\it rotation number} of $P$ with respect to $A_i$, which is the sum of rotation numbers of the components 
of $P$, with respect to $A_i$. Here, the {\it rotation number} of a connected lattice polytope $Q$ with respect to a region $A$ of $\mathbb{R}^2 \backslash\partial Q$ is the rotation number of a map $f$ from $\partial Q=\mathbb{R}/2\pi \mathbb{Z}$ to $\mathbb{R}/2\pi\mathbb{Z}$ which maps $x \in \partial Q$ to the argument of the vector from a fixed interior point of $A$ to $x$. Here, the {\it rotation number} of the map $f$ is defined by 
$(F(x)-x)/2\pi$, where $F: \mathbb{R} \to \mathbb{R}$ is the lift of $f$ and $x \in \partial Q$.
We define $\mathrm{Area}(A_i)$ for a region $A_i$ by the area induced from $\mathrm{Area}(R)=|x_2-x_1||y_2-y_1|$ for a rectangle $R$ whose diagonal vertices are $(x_1, y_1)$ and  $(x_2, y_2)$. 
Then, we define the {\it area} of $P$, denoted by $\mathrm{Area}(P)$, by $\mathrm{Area}(P)=\sum_{i=1}^m \omega(A_i)\mathrm{Area}(A_i)$, and the {\it area} of $|P|$, denoted by $\mathrm{Area}|P|$, by $\mathrm{Area}|P|=\sum_{i=1}^m |\omega(A_i)\mathrm{Area}(A_i)|$. 
\begin{remark}
For two lattice polytopes $P_1$ and $P_2$, $\mathrm{Area}|P_1 \cup P_2|=\mathrm{Area}|(-P_1^*)\cup (-P_2^*)|$, but $\mathrm{Area}|P_1\cup P_2|$ is not always equal to $\mathrm{Area}|P_1\cup (-P_2^*)|$. 
Thus, for a lattice polytope $P$ associated with lattice presentations $\Delta$ and $\Delta'$, $\mathrm{Area}|P|$ depends on the choice of the components of $P$.  
\end{remark}
 
\begin{definition}
Let $\Delta, \Delta'$ be two lattice presentations of partial matchings with the same set of $x, y$-components. 
Let us consider a transformation 
$\Delta=\Delta_0 \to \Delta_1\to \cdots \to \Delta_k=\Delta'$,  with $\Delta_j=t(\Delta_{j-1}, R_j\cup (-R_j^*))$ for a rectangle $R_j$ ($j=1,2,\ldots,k$). 
Then, we call $\sum_{j=1}^k |\mathrm{Area}(R_j)|$ the {\it area of a transformation $\Delta \to \Delta'$}.
\end{definition} 

We say that a connected lattice polytope $P$ is {\it simple} if the face of $P$ is homeomorphic to a 2-disk in $\mathbb{R}^2$. 
For a lattice polytope $P$ with regions $A_1, \ldots, A_m$ divided by $\partial P$, 
attach each region $A_i$ with right-handed (resp. left-handed) orientation when $\omega(A_i)$ is positive (resp. negative) $(i=1,\ldots,m)$. Then the union of $|\omega(A_i)|$ copies of the closure of $A_i$ bounds $\partial P$. We call the union the {\it region} of $P$, which will be denoted by the same notation $P$. 
Further, if edges $\overline{uv}$ and $\overline{wz}$ of $P$ have a transverse intersection, then we call the intersection point a {\it crossing}. 

\begin{theorem}\label{thm3-10}
Let $\Delta, \Delta'$ be two lattice presentations of partial matchings with the same set of $x, y$-components. 
We consider a transformation 
$\Delta=\Delta_0 \to \Delta_1\to \cdots \to \Delta_k=\Delta'$,  with $\Delta_j=t(\Delta_{j-1}, R_j\cup R_j^*)$ for a rectangle $R_j$ $(j=1,2,\ldots,k)$. Let $P$ be a lattice polytope associated with $\Delta$ and $\Delta'$. 
Then
\begin{equation}\label{eq0}
\sum _{j=1}^k \mathrm{Area}(R_j) =\frac{1}{2}\mathrm{Area}(P\cup (-P^*)) 
\end{equation}
and 
\begin{equation}\label{eq1}
\sum_{j=1}^k |\mathrm{Area}(R_j)|\geq \frac{1}{2}\mathrm{Area}|P\cup (-P^*)|. 
\end{equation}

Further, when $P$ satisfies either the following condition $(1)$ or $(2)$, 
\begin{equation}\label{eq2}
\sum_{j=1}^k |\mathrm{Area}(R_j)| \geq \mathrm{Area}|P|, 
\end{equation}
and there exists transformations which realize the equality of $(\ref{eq2})$, where the conditions are as follows. 

\begin{enumerate}[$(1)$]
\item
The rotation number $\omega(A)$ is equal to $\epsilon$ for any region $A$ surrounded by an embedded closed path in $\partial P$, where $\epsilon\in \{+1, -1\}$  (see Figure \ref{fig4-3} for example).
\item
The lattice polytopes $P$ and $-P^*$ are disjoint, and, 
when we regard crossings as vertices, $P$ is regarded as the union of simple lattice polytopes $P_1, \ldots, P_m$ and $Q_{i1}, \ldots, Q_{in_i}$ $(i=1,\ldots,m)$ such that $P_i \cap P_j$ $(i \neq j)$ is empty or consists of one crossing in $\partial P_i \cap \partial P_j$ and $Q_{i1}, \ldots, Q_{in_i}$ are mutually disjoint and contained in the interior of $P_i$ (see Figure $\ref{fig4-4}$ for example).
\end{enumerate}
 \end{theorem}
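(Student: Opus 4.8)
The plan is to prove the two identities/inequalities (\ref{eq0})--(\ref{eq1}) first, since they hold unconditionally, and then treat the two special cases (1) and (2) for (\ref{eq2}) separately.

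For (\ref{eq0}), I would argue that each elementary transformation by a rectangle $R_j \cup (-R_j^*)$ contributes exactly $\mathrm{Area}(R_j)$ to the signed area in the following sense. Recall that $\mathrm{Area}(P \cup (-P^*)) = \sum_i \omega(A_i)\,\mathrm{Area}(A_i)$ is the signed area, i.e.\ the integral of the winding number of $\partial(P \cup (-P^*))$ over the plane. The key observation is that the oriented boundary $\partial(P \cup (-P^*))$ is built up from the rectangles used in the transformation: moving a vertex $v,w$ to $\tilde v,\tilde w$ along $R_j$ traces out (part of) the boundary, and the algebraic area swept is precisely the signed area of $R_j$, counted with the orientation convention. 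Summing telescopes, and the factor $\tfrac12$ appears because $P \cup (-P^*)$ is the symmetric double (each physical rectangle $R_j$ in the $\Delta$-picture is paired with its mirror $-R_j^*$, so the total signed area is $2\sum_j \mathrm{Area}(R_j)$). The inequality (\ref{eq1}) then follows from the triangle inequality applied to (\ref{eq0}): $\sum_j |\mathrm{Area}(R_j)| \ge |\sum_j \mathrm{Area}(R_j)| = \tfrac12 |\mathrm{Area}(P \cup (-P^*))|$, combined with the elementary fact that $|\sum \omega(A_i)\mathrm{Area}(A_i)| \le \sum |\omega(A_i)\mathrm{Area}(A_i)| = \mathrm{Area}|P \cup (-P^*)|$; but I would need to check the $\tfrac12$ factor is compatible, using $\mathrm{Area}|P \cup (-P^*)| = 2\,\mathrm{Area}|P|$-type symmetry from the preceding remark, so this should be stated carefully.

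For inequality (\ref{eq2}) under condition (1), the hypothesis $\omega(A) = \epsilon$ for every region bounded by an embedded closed path forces all nonzero winding numbers to have the same sign, so the signed area and the absolute area of $P \cup (-P^*)$ coincide up to the global sign $\epsilon$: $\mathrm{Area}|P \cup (-P^*)| = |\mathrm{Area}(P \cup (-P^*))|$. Feeding this into (\ref{eq1}) gives $\sum_j |\mathrm{Area}(R_j)| \ge \tfrac12 \mathrm{Area}|P \cup (-P^*)| = \mathrm{Area}|P|$, where the last equality is again the symmetric-double identity $\mathrm{Area}|P \cup (-P^*)| = 2\,\mathrm{Area}|P|$. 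For the equality-realizing transformation, I would construct it explicitly: decompose the region $|P|$ into the unit-winding pieces determined by $\partial P$ and peel them off one rectangle at a time, each chosen so that no cancellation occurs (so $|\mathrm{Area}(R_j)|$ adds up exactly to $\mathrm{Area}|P|$). Condition (1) guarantees there is no overlap of opposite orientation to obstruct this.

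For condition (2), the strategy is to use the given simple-polytope decomposition. When $P$ and $-P^*$ are disjoint, the $\tfrac12$ issues simplify and $\mathrm{Area}|P \cup (-P^*)| = 2\,\mathrm{Area}|P|$ holds cleanly. Each simple $P_i$ is homeomorphic to a disk, so it has constant winding number $\pm1$ on its interior, and the nested holes $Q_{i1},\dots,Q_{in_i}$ carve out regions of controlled winding number; the inequality (\ref{eq2}) then reduces to the additivity $\mathrm{Area}|P| = \sum_i (\mathrm{Area}|P_i| - \sum_\ell \mathrm{Area}|Q_{i\ell}|)$ together with case (1) applied to each simple piece. To realize equality I would transform each $P_i$ by a single rectangle (its simplicity makes it a genuine rectangle after resolving crossings) and handle the holes recursively, taking care that crossings shared between $P_i$ and $P_j$ are treated as vertices and do not force extra area. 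I expect the main obstacle to be exactly the bookkeeping at the crossings in case (2): verifying that regarding a crossing as a vertex splits $P$ into the claimed simple pieces without double-counting area or introducing sign cancellations that would break the equality, and confirming the recursive peeling of nested $Q_{i\ell}$ terminates with the area budget summing to $\mathrm{Area}|P|$ exactly.
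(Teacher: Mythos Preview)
Your derivation of (\ref{eq1}) has a genuine gap. From the triangle inequality you obtain
\[
\sum_j |\mathrm{Area}(R_j)| \;\ge\; \Bigl|\sum_j \mathrm{Area}(R_j)\Bigr| \;=\; \tfrac12\,\bigl|\mathrm{Area}(P\cup(-P^*))\bigr|,
\]
and you also note $\bigl|\mathrm{Area}(P\cup(-P^*))\bigr| \le \mathrm{Area}|P\cup(-P^*)|$. But these two inequalities chain in the \emph{wrong direction}: they only give you a lower bound by $\tfrac12|\mathrm{Area}(P\cup(-P^*))|$, which is in general strictly smaller than $\tfrac12\,\mathrm{Area}|P\cup(-P^*)|$. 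The paper's argument (Lemma~\ref{lem4-15}) is different in kind: the oriented rectangles $R_j\cup(-R_j^*)$ actually \emph{tile} $P\cup(-P^*)$ as $2$-chains, so at every point $p$ the winding numbers satisfy $\omega_{P\cup(-P^*)}(p)=\sum_j \omega_{R_j\cup(-R_j^*)}(p)$. Taking absolute values \emph{pointwise} and then integrating gives
\[
\mathrm{Area}|P\cup(-P^*)| \;=\; \int \bigl|\omega_{P\cup(-P^*)}\bigr| \;\le\; \sum_j \int \bigl|\omega_{R_j\cup(-R_j^*)}\bigr| \;=\; \sum_j \mathrm{Area}|R_j\cup(-R_j^*)| \;=\; 2\sum_j |\mathrm{Area}(R_j)|.
\]
This pointwise triangle inequality on winding numbers is the missing ingredient; a global triangle inequality on total signed areas is too weak.

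There is a second, smaller gap in your construction of the equality-realizing transformation under condition~(2). You write that each simple $P_i$ can be handled ``by a single rectangle (its simplicity makes it a genuine rectangle after resolving crossings)''. That is not true: \emph{simple} here means only that the face is homeomorphic to a disk, not that $P_i$ has four vertices. A simple connected lattice polytope with $2n$ initial/terminal vertices requires $n-1$ rectangles, and one must argue inductively (this is the paper's Claim~(a), proved via Proposition~\ref{lem3-15}) that at each stage there exists a rectangle $R\subset P_i$ with $t(P_i,R)$ again simple and with region $P_i\setminus R$. Without this inductive peeling lemma the construction is incomplete; the ``peel off one rectangle at a time'' idea you sketch for case~(1) is correct in spirit but needs exactly this lemma to guarantee that each peel stays inside the polytope and preserves simplicity.
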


\begin{figure}[ht]
\centering
\includegraphics*[height=3cm]{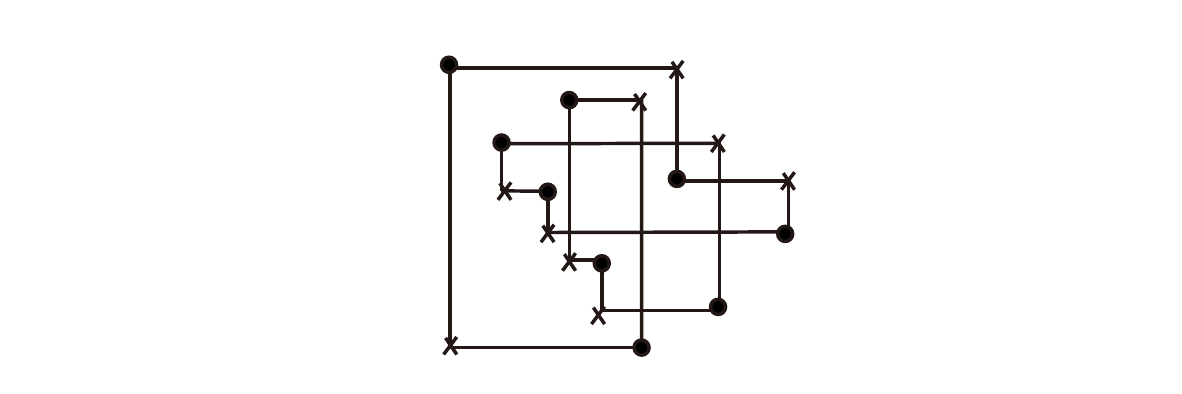}
\caption{Example of a lattice polytope $P$ satisfying the condition (1), where the vertices of $\mathrm{Ver}_0(P)$ (respectively $\mathrm{Ver}_1(P)$) are indicated by black circles (respectively X marks).}
\label{fig4-3}
\end{figure}

\begin{figure}[ht]
\centering
\includegraphics*[height=3.5cm]{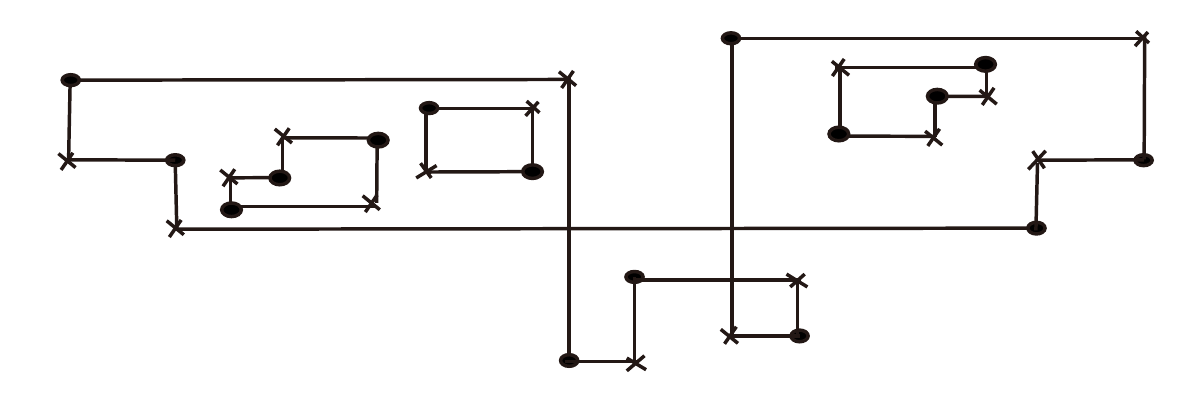}
\caption{Example of a lattice polytope $P$ satisfying the condition (2), where the vertices of $\mathrm{Ver}_0(P)$ (respectively $\mathrm{Ver}_1(P)$) are indicated by black circles (respectively X marks).}
\label{fig4-4}
\end{figure}

\begin{remark}
The condition (1) indicates that each connected component of $P$ is in the form of the projected image into the $xy$-plane of a closed braid in the $xyz$-space in general position with respect to the $z$-axis \cite{Birman}. 
\end{remark}

\begin{proof}[Proof of Theorem \ref{thm3-10}]
For a point $u$ of $\mathbb{R}^2$, we denote by $x(u)$ and $y(u)$ the $x$ and $y$-components of $u$ respectively. 
For a rectangular $R(v,w)$, 
we consider the other pair of diagonal vertices of $R(v,w)$,  
$\tilde{v}=(x(w), y(v))$ and $\tilde{w}=(x(v), y(w))$. 
Then, we assign to $R(v,w)$ an orientation such that the edges are oriented from $v$ to $\tilde{v}$, from $\tilde{v}$ to $w$, from $w$ to $\tilde{w}$, and from $\tilde{w}$ to $v$. 
Then, this induces an orientation of $P$ and $-P^*$ which coincides with the original orientation of $P$ and $-P^*$, 
and by Lemma \ref{lem4-15} we see that 
\[
\sum _{j=1}^k (\mathrm{Area}(R_j \cup (-R_j^*))=\mathrm{Area}(P \cup (-P^*)), 
\]
and 
\[
\sum _{j=1}^k \mathrm{Area}|R_j \cup R_j^*|\geq\mathrm{Area}|P \cup (-P^*)|. 
\]
Let $Q$ be a lattice polytope. Then, since the mirror reflection of an edge of $Q$ in the $x$-direction (respectively $y$-direction) is an edge of $Q^*$ in the $y$-direction (respectively $x$-direction), the orientation of any closed path $C$ in $\partial Q$ and $C^*$ in $\partial Q^*$ coincide as an embedded circle in $\mathbb{R}^2$. Hence 
$\mathrm{Area}(Q)=\mathrm{Area}(Q^*)$, and hence, for $j=1,\ldots, k$, 
\begin{equation*} \mathrm{Area}(R_j\cup (-R_j^*))
=  \mathrm{Area}(R_j)+\mathrm{Area}(-R_j^*)=2  \mathrm{Area}(R_j)
\end{equation*}
and 
\begin{equation*} \mathrm{Area}|R_j\cup (-R_j^*)|
= | \mathrm{Area}(R_j)|+|\mathrm{Area}(-R_j^*)|=2 |\mathrm{Area}(R_j)|.  
\end{equation*}
Hence we have (\ref{eq0}) and (\ref{eq1}). 

When the lattice polytope $P$ satisfies the condition 1 or 2, $\mathrm{Area}(P \cup (-P^*))=\mathrm{Area}(P)+\mathrm{Area}(-P^*)$. Since $\mathrm{Area}(P)=\mathrm{Area}(-P^*)$, we have (\ref{eq2}). In these cases, the boundary $\partial P$ can be divided to a union of boundaries of simple connected lattice polytopes. 
In order to show that there exists a transformation which realizes the equality of (\ref{eq2}),  
first we show the following claims. 

Claim (a). For a simple connected lattice polytope $Q$, there is a transformation $\mathrm{Ver}_0(Q) \to \mathrm{Ver}_1(Q)$ by rectangles $R_j$ $(j=1,\ldots, k)$ such that 
\begin{equation}\label{eq3-2}
\sum _{j=1}^k |\mathrm{Area}(R_j)|=|\mathrm{Area}(Q)|. 
\end{equation}

Claim (b). 
For simple connected lattice polytopes $Q, Q_1, \ldots, Q_l$ such that $Q_1, \ldots, Q_l$ are mutually disjoint and contained in the interior of $Q$, and $Q \cup Q_1 \cup \cdots \cup Q_l$ is a region obtained from a 2-disk $Q$ by removing $Q_1,\ldots, Q_l$, there is a transformation $\mathrm{Ver}_0(Q \cup \cup_{i=1}^l Q_i) \to \mathrm{Ver}_1(Q\cup \cup_{i=1}^l Q_i)$ by rectangles $R_j$ $(j=1,\ldots,k)$ such that 
\begin{equation}\label{eq3-3}
\sum _{j=1}^k |\mathrm{Area}(R_j)|=\mathrm{Area}|Q \cup \cup_{i=1}^lQ_i|=\big||\mathrm{Area}(Q)|-\sum_{i=1}^l|\mathrm{Area}(Q_i)|\big|.  
\end{equation} 

First we show Claim (a). 
By Proposition \ref{lem3-15}, there is a rectangular $R=R(v,w)$ contained in the region $P$ with $v,w \in \mathrm{Ver}_0(Q)$ such that $Q\backslash R$ forms a simple lattice polytope $Q'$. 
This implies that  
$|\mathrm{Area}(R)|+|\mathrm{Area}(Q')|=|\mathrm{Area}(Q)|$. 
Hence, in order to show Claim (a), it suffices to show that there is a transformation $\mathrm{Ver}_0(Q') \to \mathrm{Ver}_1(Q')$ satisfying (\ref{eq3-2}) with $Q=Q'$.  Let $n$ be the number of vertices of $\mathrm{Ver}_0(Q)$. Since $v,w$ of $R=R(v,w)$ are vertices of $\mathrm{Ver}_0(Q)$, we see that if $Q'$ is connected, then the number of vertices of $\mathrm{Ver}_0(Q')$ is $n-1$. If $Q'$ is not connected, then $Q'$ consists of 2 components, and the sum of the number of vertices of $\mathrm{Ver}_0(Q')$ is $n$, thus the number of vertices of $\mathrm{Ver}_0(Q')$ which come from a connected component is less than $n$. If the number of vertices of $\mathrm{Ver}_0(Q')$ is 2, then $Q'$ is a rectangular, and we have (\ref{eq3-2}) with $Q=Q'$. Thus, by induction on the number of vertices of connected components, we can construct a transformation satisfying (\ref{eq3-2}).  

For the other Claim (b), by a similar argument, we can show that there exists a transformation satisfying (\ref{eq3-3}). We can take rectangles $R$ inductively. When the vertices $v,w$ of a used rectangle $R(v,w)$ come from distinct components, 
then the number $l$ of simple connected lattice polytopes is reduced by one. 

Now, we construct a transformation such that $\sum_j |\mathrm{Area}(R_j)|=\mathrm{Area}|P|$, assuming that $P$ satisfies the condition (1) or (2). 
We show the case when $P$ is connected, and $Q_{i1}, \ldots, Q_{in_i}$ $(i=1,\ldots, m)$ of the condition (2) are empty graphs. 
Let us regard $\partial P$ as an immersion of a circle. 

(Step 1) 
Take an interval $I$ of $\partial P$ which starts from a crossing $x$ and comes back to $x$. If there is another interval $J$ in $I$ which starts from a crossing $y$ and comes back to $y$, then take $J$ instead of $I$. Repeating this process, we have an interval $I$ of $\partial P$ which bounds a simple connected lattice polytope $Q_1$ whose vertices consists of the vertices of $P$ and one crossing $x$. 

(Step 2) For a lattice polytope $Q_1$ of Step 1, 
if $x \in \mathrm{Ver}_1(Q_1)$, then put $P_1=Q_1$. Then, by Claim (a), we have a transformation  
$P \to (P\backslash P_1)\cup \mathrm{Ver}_1(P_1)$. 
If $x \in \mathrm{Ver}_0(Q_1)$, then consider $Q_2=P\backslash Q_1$, and repeat Step 1 several times. Since $x \in \mathrm{Ver}_0(Q_1)$ become a vertex in $\mathrm{Ver}_1(Q_2)$, we obtain $Q_n$ such that the crossings are in $\mathrm{Ver}_1(Q_n)$. Put $P_1=Q_n$, and then by Claim (a) we see that there is a transformation $P \to (P\backslash P_1) \cup \mathrm{Ver}_1(P_1)$. 

By repeating this process, we have a transformation $P \to \mathrm{Ver}_1(P)$, which is a transformation $ \Delta  \to  \Delta'$ such that $\sum_j |\mathrm{Area}(R_j)|=\mathrm{Area}|P|$. See Figures \ref{fig4-5} and \ref{fig4-6}. 

The case when  $Q_{i1}, \ldots, Q_{in_i}$ $(i=1,\ldots, m)$ of  the condition 2 may not be empty graphs can be shown by the same argument by using Claim (b). Thus there is a transformation such that $\sum_j |\mathrm{Area}(R_j)|=\mathrm{Area}|P|$ when $P$ satisfies the condition (1) or (2). 
\end{proof}

\begin{figure}[ht]
\centering
\includegraphics*[height=6cm]{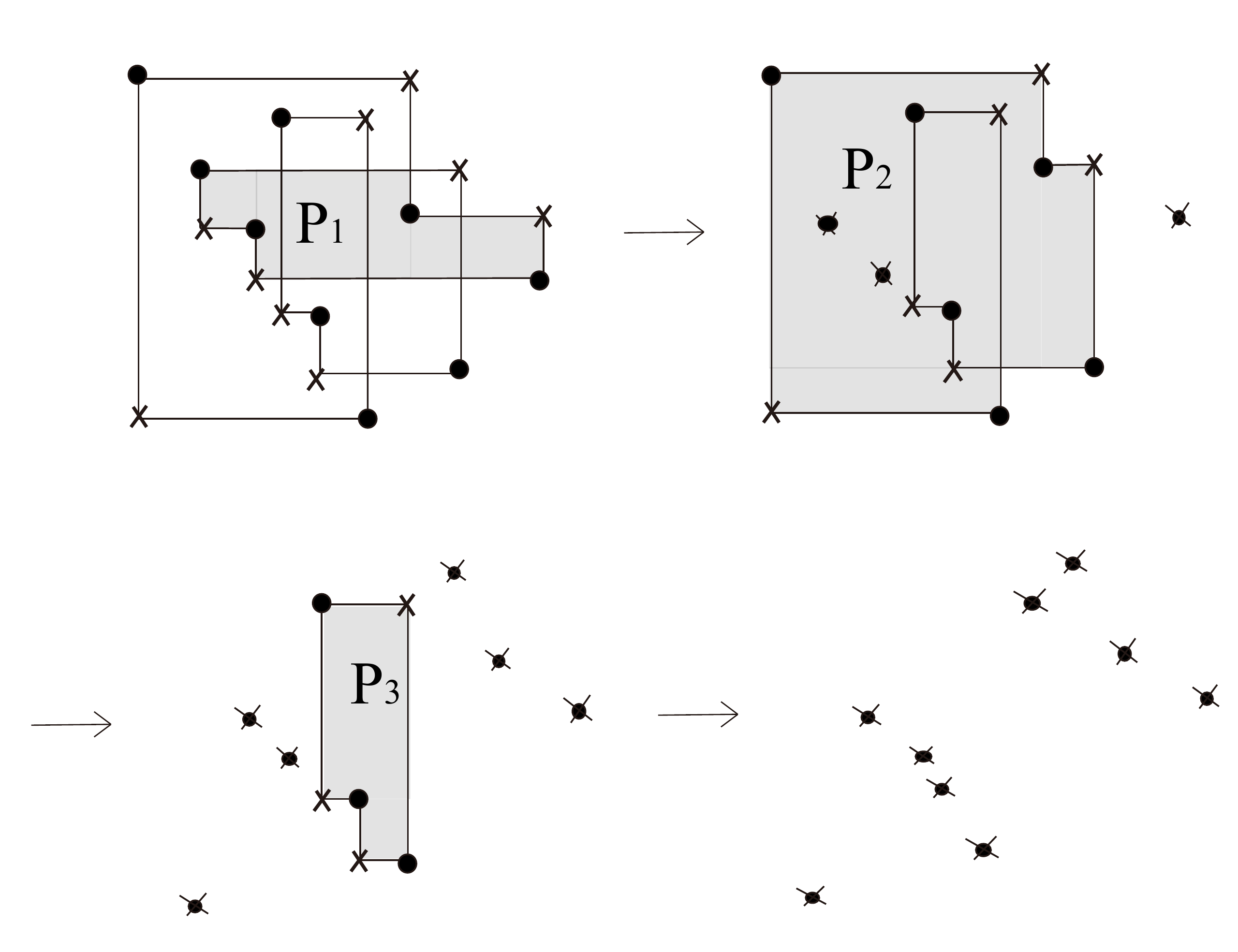}
\caption{Example of a transformation of a lattice polytope $P$ satisfying the condition (1). We have a transformation of $P$ by composite of transformations of $P_1, P_2, P_3$, where $P_1, P_2, P_3$ are indicated by shadowed regions, and the vertices of $\mathrm{Ver}_0(P)$ (respectively $\mathrm{Ver}_1(P)$) are indicated by black circles (respectively X marks). }
\label{fig4-5}
\end{figure}

\begin{figure}[ht]
\centering
\includegraphics*[height=3cm]{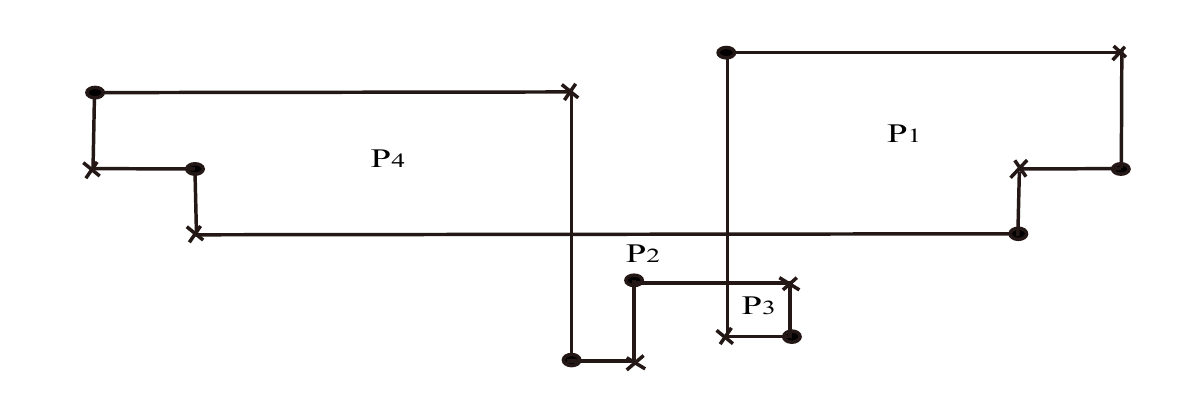}
\caption{Example of a transformation of a lattice polytope $P$ satisfying the condition (2). We have a transformation of $P$ by composite of transformations of $P_1, P_2, P_3, P_4$, where the vertices of $\mathrm{Ver}_0(P)$ (respectively $\mathrm{Ver}_1(P)$) are indicated by black circles (respectively X marks). }
\label{fig4-6}
\end{figure}
  
\begin{proposition}\label{prop3-18}
There exists a lattice polytope $P$ such that for any transformation of $P$ by  rectangles $R_1, \ldots, R_k$, $\sum_{j=1}^k|\mathrm{Area}(R_j)|>\mathrm{Area}|P|$. 
\end{proposition}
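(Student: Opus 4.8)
The plan is to exhibit an explicit counterexample rather than argue in the abstract, and the first move is to recast transformations combinatorially. Fixing the $2n$ distinct $x,y$-components, every configuration occurring in a transformation of $P$ pairs the $x$-values with the $y$-values, so it is the graph of a permutation, and a single rectangle move $R(v,w)$ swaps the $y$-values assigned to $x(v)$ and $x(w)$. Thus a transformation $\mathrm{Ver}_0(P)\to\mathrm{Ver}_1(P)$ is exactly a factorization of $\sigma_1(P)\sigma_0(P)^{-1}$ into transpositions, i.e.\ a path in the Cayley graph of $\mathfrak{S}_n$ generated by all transpositions, where the transposition used at a configuration $\tau$ carries the weight $|\mathrm{Area}(R)|$ of the corresponding rectangle at $\tau$; the area $\sum_{j}|\mathrm{Area}(R_j)|$ is the total weight, and its minimum is the weighted transposition distance from $\sigma_0(P)$ to $\sigma_1(P)$. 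The goal is then to find $\sigma_0(P),\sigma_1(P)$ for small $n$ whose weighted transposition distance strictly exceeds $\mathrm{Area}|P|$.

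I would reduce this to a bound on negatively oriented rectangles. By the signed identity established in the proof of Theorem \ref{thm3-10} (the one-sided form of (\ref{eq0})), every transformation of $P$ satisfies $\sum_{j=1}^k\mathrm{Area}(R_j)=\mathrm{Area}(P)$ as a signed sum, where $\mathrm{Area}(R_j)$ carries the sign of the orientation of $R_j$. Taking $\mathrm{Area}(P)\ge 0$ without loss of generality (replace $P$ by $-P$ otherwise) and writing $W$ for the total area of the negatively oriented rectangles in a given transformation, one gets
\[
\sum_{j=1}^k|\mathrm{Area}(R_j)|=\mathrm{Area}(P)+2W=|\mathrm{Area}(P)|+2W.
\]
Hence $\sum_j|\mathrm{Area}(R_j)|>\mathrm{Area}|P|$ is equivalent to $2W>\mathrm{Area}|P|-|\mathrm{Area}(P)|$, and the right-hand side is positive precisely when $P$ has regions of both signs of rotation number. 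So I would choose $P$ connected with an immersed boundary enclosing a region $A_+$ with $\omega=+1$ and a region $A_-$ with $\omega=-1$ (a figure-eight as the local model), so that neither condition $(1)$ nor condition $(2)$ of Theorem \ref{thm3-10} holds; then $\mathrm{Area}|P|-|\mathrm{Area}(P)|=2\min(\mathrm{Area}(A_+),\mathrm{Area}(A_-))$, and the task becomes showing $W>\min(\mathrm{Area}(A_+),\mathrm{Area}(A_-))$ for every transformation.

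The heart of the argument, and the step I expect to be hardest, is this uniform lower bound on $W$ over all transformations. The subtlety is that one cannot simply \emph{cover $A_-$ once}: a move $R(v,w)$ is admissible only when $v,w$ are points of the current configuration, so the available rectangles change as the configuration evolves, and the oriented region $P$ need not decompose into one positive and one negative rectangle through admissible intermediate states. For the chosen $P$ I would track the configuration along an arbitrary transformation and show that clearing the $-1$ region unavoidably re-covers part of the $+1$ region or forces an auxiliary negatively oriented rectangle, pushing $W$ strictly past $\min(\mathrm{Area}(A_+),\mathrm{Area}(A_-))$.

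Because the configurations are the $n!$ permutations of the fixed $x$-values against the fixed $y$-values and the moves are weighted transpositions, the reachable states and admissible moves form a finite weighted graph, so as a rigorous backstop I would, for the explicit small $P$, enumerate this graph and verify directly that the weighted transposition distance from $\sigma_0(P)$ to $\sigma_1(P)$ strictly exceeds $\mathrm{Area}|P|$. The main obstacle is exactly ruling out every clever cancelling sequence: an upper bound on the minimum is immediate from any single transformation, but the strict inequality for all transformations needs either the monovariant sketched above or this finite enumeration, and I would keep $n$ as small as possible so that the enumeration is short and the mechanism forcing the extra negatively oriented rectangle is transparent.
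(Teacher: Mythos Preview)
Your combinatorial reframing in terms of permutations and the signed identity $\sum_j\mathrm{Area}(R_j)=\mathrm{Area}(P)$ is correct and useful, and the reduction to bounding the total negatively oriented area $W$ is sound. The difficulty is that your candidate example does not work. A figure-eight lattice polytope---one crossing, one lobe with $\omega=+1$ and one with $\omega=-1$, no other interior regions---\emph{does} admit a transformation with $\sum_j|\mathrm{Area}(R_j)|=\mathrm{Area}|P|$. Indeed, splitting at the crossing gives two simple polygons $Q_1,Q_2$ sharing the crossing as a vertex; the orientation convention forces the crossing to lie in $\mathrm{Ver}_1$ of exactly one of them, say $Q_2$. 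Applying Claim~(a) from the proof of Theorem~\ref{thm3-10} to $Q_2$ uses only original initial vertices and moves one of them to the crossing; then the crossing is a current initial vertex and $Q_1$ can be cleared by Claim~(a) as well. The total absolute area is $|\mathrm{Area}(Q_1)|+|\mathrm{Area}(Q_2)|=\mathrm{Area}|P|$. So the ``figure-eight as local model'' with only $\pm1$ regions is precisely a case covered by the existence half of Theorem~\ref{thm3-10}, and your target inequality $W>\min(\mathrm{Area}(A_+),\mathrm{Area}(A_-))$ fails there.

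The paper's proof hinges on a mechanism your plan does not mention: interior regions with rotation number \emph{zero}. By Lemma~\ref{lem4-15}, a transformation realizing $\sum_j|\mathrm{Area}(R_j)|=\mathrm{Area}|P|$ must have every $R_j$ disjoint from each zero region; otherwise that portion contributes positively to $\sum_j|\mathrm{Area}(R_j)|$ but not to $\mathrm{Area}|P|$. The paper exhibits a specific $P$ whose zero regions are placed so that, under this disjointness constraint, exactly one rectangle is admissible at the first step, exactly one at the second, and after those two forced moves every remaining rectangle with both diagonal vertices in the current $\mathrm{Ver}_0$ meets a zero region---so no minimal-area transformation exists. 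Your finite-enumeration backstop would certify this same example, but the point is that the obstruction lives in the zero regions, not merely in the coexistence of $+1$ and $-1$ regions; without that ingredient your search for a counterexample is aimed at the wrong shapes.
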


\begin{proof}
We consider a lattice polytope as illustrated by the leftmost figure of Figure \ref{fig4-7}, where the vertices of $\mathrm{Ver}_0(P)$ (respectively $\mathrm{Ver}_1(P)$) are indicated by black circles (respectively X marks), and the numbers in regions divided by $\partial P$ denote the rotation numbers. 
Assume that there is a transformation of $P$ by rectangles $R_1, \ldots, R_k$ such that $\sum_{j=1}^k|\mathrm{Area}(R_j)|=\mathrm{Area}|P|$. Then, by Lemma \ref{lem4-15}, any $R_j$ is disjoint with any region whose rotation number is zero. Thus, we have only one applicable rectangle $R_1=R(v_1, w_1)$ for $v_1,w_1 \in \mathrm{Ver}_0(P)$ as in the figure. Then, by a transformation of $P$ by $R_1$, we have a lattice polytope $P_1$ as in the middle figure of Figure \ref{fig4-7}. By the same argument, we have only one applicable rectangle $R_2=R(v_2, w_2)$ for $v_2,w_2 \in \mathrm{Ver}_0(P_1)$ as in the figure. Then, by a transformation of $P_1$ by $R_2$, we have a lattice polytope $P_2$ as in the right figure of Figure \ref{fig4-7}. Now, ignoring the isolated vertex, every possible rectangle of $P_2$ has intersection with a region with the rotation number zero. This implies that there is not a transformation of $P$ by rectangles $R_1, \ldots, R_k$ such that $\sum_{j=1}^k|\mathrm{Area}(R_j)|=\mathrm{Area}|P|$, and the required result follows.  
\end{proof}

\begin{figure}[ht]
\centering
\includegraphics*[height=3.5cm]{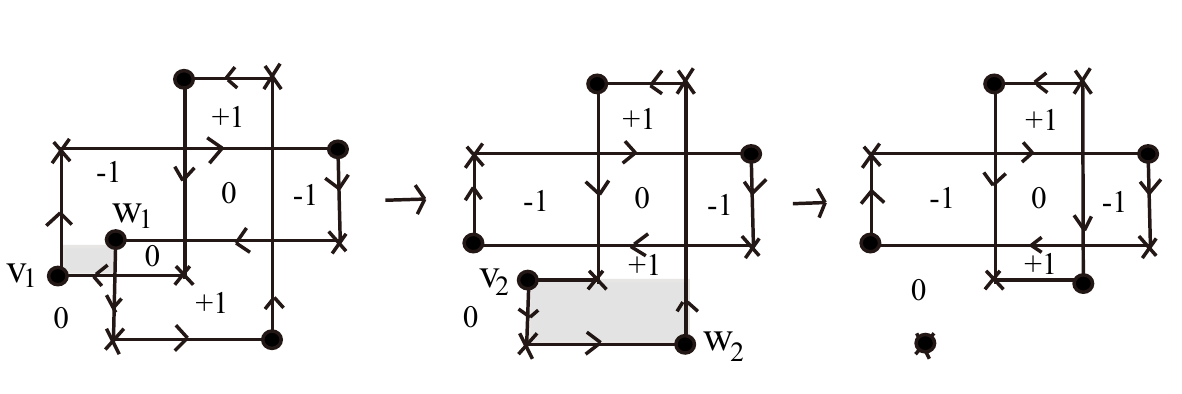}
\caption{Example of a transformation of a lattice polytope $P$ satisfying $\sum_{j=1}^k|\mathrm{Area}(R_j)|>\mathrm{Area}|P|$, where the vertices of $\mathrm{Ver}_0(P)$ (respectively $\mathrm{Ver}_1(P)$) are indicated by black circles (respectively X marks), and the numbers in regions divided by $\partial P$ denote the rotation numbers. }
\label{fig4-7}
\end{figure}

\begin{corollary}
There exist lattice presentations $\Delta, \Delta'$ such that any transformation $\Delta \to \Delta'$ satisfies $\sum_{j=1}^k(|\mathrm{Area}(R_j)|+|\mathrm{Area}(-R_j^*)|)>\frac{1}{2}\mathrm{Area}|P\cup (-P^*)|$, where $R_1, \ldots, R_k$ are used rectangles and $P$ is a lattice polytope associated with $\Delta, \Delta'$. Thus, there exist lattice presentations $\Delta, \Delta'$ such that the minimal area of transformations $\Delta \to \Delta'$ is greater than $\frac{1}{2}\mathrm{Area}|P\cup (-P^*)|$. 
\end{corollary}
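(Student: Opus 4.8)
The plan is to deduce this Corollary directly from Proposition \ref{prop3-18}, translating its statement about a single lattice polytope into one about the associated lattice presentations. First I would take $P$ to be the lattice polytope constructed in Proposition \ref{prop3-18} (the one in Figure \ref{fig4-7}) and set $\Delta=\mathrm{Ver}_0(P\cup(-P^*))$ and $\Delta'=\mathrm{Ver}_1(P\cup(-P^*))$, so that $P$ is a lattice polytope associated with $\Delta$ and $\Delta'$. By the proposition relating transformations in Section \ref{sec4-1}, every transformation $\Delta\to\Delta'$ is presented by a transformation $P\to\mathrm{Ver}_1(P)$ by rectangles $R_1,\ldots,R_k$, whose mirror partners acting on $-P^*$ are $-R_1^*,\ldots,-R_k^*$, and conversely; hence the areas $\sum_{j=1}^k|\mathrm{Area}(R_j)|$ realized by transformations $\Delta\to\Delta'$ are exactly those realized by transformations of $P$.

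Second, Proposition \ref{prop3-18} gives, for this $P$, that every transformation by rectangles satisfies $\sum_{j=1}^k|\mathrm{Area}(R_j)|>\mathrm{Area}|P|$. It then remains only to compare $\mathrm{Area}|P|$ with $\tfrac12\mathrm{Area}|P\cup(-P^*)|$. I would establish $\mathrm{Area}|P|\geq\tfrac12\mathrm{Area}|P\cup(-P^*)|$ by refining $\mathbb{R}^2$ into the regions cut out by $\partial P\cup\partial(-P^*)$ and using that on each such region the rotation number of $P\cup(-P^*)$ equals $\omega_P+\omega_{-P^*}$; the triangle inequality $|\omega_P+\omega_{-P^*}|\leq|\omega_P|+|\omega_{-P^*}|$ weighted by area yields $\mathrm{Area}|P\cup(-P^*)|\leq\mathrm{Area}|P|+\mathrm{Area}|-P^*|$, and the mirror-reflection identity $\mathrm{Area}|-P^*|=\mathrm{Area}|P|$ (reflection across $x=y$ preserves absolute area, as used in the proof of Theorem \ref{thm3-10}) gives $\mathrm{Area}|P\cup(-P^*)|\leq 2\mathrm{Area}|P|$.

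Chaining these, for every transformation $\Delta\to\Delta'$,
\[
\sum_{j=1}^k|\mathrm{Area}(R_j)|>\mathrm{Area}|P|\geq\tfrac12\mathrm{Area}|P\cup(-P^*)|,
\]
and since $|\mathrm{Area}(-R_j^*)|=|\mathrm{Area}(R_j)|$ this upgrades to $\sum_{j=1}^k(|\mathrm{Area}(R_j)|+|\mathrm{Area}(-R_j^*)|)=2\sum_{j=1}^k|\mathrm{Area}(R_j)|>\tfrac12\mathrm{Area}|P\cup(-P^*)|$, the displayed inequality. Taking the minimum over all transformations then gives the final sentence.

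I do not expect a serious obstacle, as the statement is essentially a repackaging of Proposition \ref{prop3-18}. The one point needing care is $\mathrm{Area}|P|\geq\tfrac12\mathrm{Area}|P\cup(-P^*)|$: one must be sure that the rotation numbers of the union genuinely add region-by-region, so that the triangle inequality applies and no cancellation lets $\mathrm{Area}|P\cup(-P^*)|$ exceed $2\mathrm{Area}|P|$; additivity of rotation numbers under superposition of the two boundary immersions guarantees this. Should one wish to avoid even this, it suffices to choose $P$ with $P$ and $-P^*$ disjoint as regions, whereupon $\mathrm{Area}|P\cup(-P^*)|=2\mathrm{Area}|P|$ holds outright.
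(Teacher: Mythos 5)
Your reduction to Proposition \ref{prop3-18} has a genuine gap at its first step. You claim that, by the proposition of Section \ref{sec4-1}, every transformation $\Delta \to \Delta'$ is presented by a transformation $P \to \mathrm{Ver}_1(P)$ ``and conversely,'' so that the realizable areas for $\Delta \to \Delta'$ coincide with those for transformations of $P$. That proposition says something weaker: a transformation $\Delta \to \Delta'$ corresponds to a transformation of the \emph{union} $P \cup (-P^*)$, and only the converse direction (a transformation of $P$ alone induces one of $\Delta$) holds in general. A transformation of $\Delta$ may use a rectangle $R(v,w)$ with $v$ a vertex coming from $P$ and $w$ a vertex coming from $-P^*$; such ``mixed'' moves are not transformations of $P$, and they are not covered by Proposition \ref{prop3-18}. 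This is not a technicality: the paper's Proposition \ref{prop4-13} exhibits minimal-area transformations of $\Delta$ whose first rectangle is exactly of this mixed type, and in its Case (2) \emph{no} minimal-area transformation is presented by a transformation of $P$. So the asserted equality of realizable areas is false in general, and the application of Proposition \ref{prop3-18} to an arbitrary transformation of $\Delta$ collapses. (Your remaining steps are fine: the triangle inequality for rotation numbers giving $\mathrm{Area}|P\cup(-P^*)| \le 2\,\mathrm{Area}|P|$, the identity $|\mathrm{Area}(-R_j^*)|=|\mathrm{Area}(R_j)|$, and the passage to the minimum.)

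The paper's own proof closes precisely this hole by choosing the presentations so that $P \cap (-P^*) = \emptyset$, with $P$ the polytope of Proposition \ref{prop3-18}. Disjointness is what makes the reduction legitimate: by Lemma \ref{lem4-15} the rectangles used in a transformation form regions whose boundaries are $\partial(P \cup (-P^*))$, and a transformation attaining the lower bound of (\ref{eq1}) can use no rectangle meeting a region of rotation number zero; since the region separating $P$ from $-P^*$ has rotation number zero, no mixed rectangle can occur (this is the argument in the proofs of Proposition \ref{prop3-18} and Lemma \ref{lem3-14}), so an extremal transformation of $\Delta$ would split into a transformation of $P$ and its mirror, contradicting Proposition \ref{prop3-18}. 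Note that your closing remark does introduce disjointness, but only to get $\mathrm{Area}|P\cup(-P^*)| = 2\,\mathrm{Area}|P|$; you would still need the exclusion of mixed rectangles before Proposition \ref{prop3-18} can be invoked, and without it your argument does not go through.
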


\begin{proof}
Take lattice presentations $\Delta, \Delta'$ presented by lattice polytopes $P, -P^*$ such that $P\cap (-P^*)=\emptyset$ and $P$ is the lattice polytope given in Proposition \ref{prop3-18}. Then $\Delta, \Delta'$ are the required presentations. 
\end{proof}

The transformation with minimal area $\mathrm{Area}|P|$ which we constructed in the proof of Theorem \ref{thm3-10} 
is described by a transformation of $P$, but we have the following proposition. 

\begin{proposition}
There exist lattice presentations of partial matchings $\Delta, \Delta'$ and a transformation $\Delta \to \Delta'$ by rectangles $R_j \cup (-R_j^*)$ $(j=1,\ldots,k)$ such that 
$\sum_{j=1}^k |\mathrm{Area}(R_j)|=\mathrm{Area}|P|$  
but $R_1=R(v,w)$ satisfies $v \in  \mathrm{Ver}_0(P)$ and $w \in  \mathrm{Ver}_0(-P^*)$. Thus, there exists a transformation $\Delta \to \Delta'$ with minimal area $\mathrm{Area}|P|$ which is not presented by a transformation of $P$. We have examples satisfying one of the following conditions. 

\begin{enumerate}[$(1)$]\label{prop4-13}
\item 
The lattice polytope $P$ is connected and simple. In this case, there is also a transformation $\Delta \to \Delta'$ with minimal area which is presented by a transformation of $P$. 

\item
The lattice polytope $P$ is connected but not simple. In this example, any transformation $\Delta \to \Delta'$ with minimal area is not presented by a transformation of $P$. 

\end{enumerate}
\end{proposition}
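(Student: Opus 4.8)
The plan is to prove the proposition by explicit construction, giving one family of examples for each of the two listed conditions. Since the assertion is purely existential, it suffices to exhibit lattice presentations $\Delta, \Delta'$ together with the required transformations and to verify the relevant area identities; no general theory beyond Theorem \ref{thm3-10}, Proposition \ref{prop3-18}, and Lemma \ref{lem4-15} is needed. In both cases I would choose $\Delta, \Delta'$ so that the associated lattice polytope $P$ is connected and so that $P$ and $-P^*$ are disjoint. Disjointness gives $\mathrm{Area}(P\cup(-P^*))=\mathrm{Area}(P)+\mathrm{Area}(-P^*)$, and since $\mathrm{Area}(P)=\mathrm{Area}(-P^*)$ (shown in the proof of Theorem \ref{thm3-10}), the lower bound (\ref{eq2}) reads $\sum_j|\mathrm{Area}(R_j)|\geq \mathrm{Area}|P|$, so that $\mathrm{Area}|P|$ is the target minimal area. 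The feature to engineer into each example is a vertex $v\in\mathrm{Ver}_0(P)$ and a vertex $w\in\mathrm{Ver}_0(-P^*)$ for which the mirror pair $R(v,w)\cup(-R(v,w)^*)$ is a legitimate first step $R_1$ of a transformation $\Delta\to\Delta'$.

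Next I would exhibit the mixing transformation step by step. Starting from $R_1=R(v,w)$ with $v\in\mathrm{Ver}_0(P)$ and $w\in\mathrm{Ver}_0(-P^*)$, and continuing with rectangles $R_2,\ldots,R_k$, I compute $\sum_{j=1}^k|\mathrm{Area}(R_j)|$ directly from the coordinates and check that it equals $\mathrm{Area}|P|$; since $R_1$ has one endpoint in each copy, this transformation is by definition not presented by a transformation of $P$ alone. This settles the first assertion of the proposition for both examples at once.

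For case (1), where $P$ is additionally simple, the existence of a \emph{second}, non-mixing minimal transformation follows from Theorem \ref{thm3-10}: a connected simple $P$ satisfies condition (2) with $m=1$, $P_1=P$, and all inner graphs $Q_{1j}$ empty, so the construction of Claim (a) in the proof of that theorem yields a transformation $P\to\mathrm{Ver}_1(P)$ with $\sum_j|\mathrm{Area}(R_j)|=|\mathrm{Area}(P)|=\mathrm{Area}|P|$. Thus in case (1) minimal transformations exist both within $P$ and across the two copies.

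The substantive part is case (2), where $P$ is connected but not simple. Here I would take $P$ to carry a region of rotation number zero, as in Proposition \ref{prop3-18}, and argue in the spirit of that proof that any transformation staying within $P$ — that is, $P\to\mathrm{Ver}_1(P)$ using only rectangles $R(v,w)$ with $v,w\in\mathrm{Ver}_0(P)$ — is forced by Lemma \ref{lem4-15} to use rectangles disjoint from every zero-rotation region; tracking the resulting forced choices stage by stage, one shows the process jams before reaching $\mathrm{Ver}_1(P)$, so no within-$P$ transformation attains $\mathrm{Area}|P|$. The mixing rectangle $R(v,w)$ with $w\in\mathrm{Ver}_0(-P^*)$, reaching across the line $\partial C_+$, bypasses exactly this obstruction and yields the minimal transformation already constructed above, which therefore cannot be presented by a transformation of $P$. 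The main obstacle I anticipate is precisely this jamming argument: one must verify that \emph{every} admissible within-$P$ rectangle sequence gets stuck, which requires the same kind of finite case analysis of applicable rectangles at each stage that is carried out in the proof of Proposition \ref{prop3-18}.
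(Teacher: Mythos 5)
Your high-level plan (explicit examples, direct area computation, Theorem \ref{thm3-10} for the within-$P$ transformation in case (1)) matches the paper's proof in spirit, but it contains a structural error that makes your examples impossible: you insist that $P$ and $-P^*$ be disjoint, and disjointness is incompatible with the existence of a \emph{minimal} transformation whose first rectangle mixes the two copies. Indeed, a rectangle $R_1=R(v,w)$ with $v\in \mathrm{Ver}_0(P)$ strictly above the line $x=y$ and $w\in \mathrm{Ver}_0(-P^*)$ strictly below it always meets that line in a nondegenerate segment: writing $R_1=[x_{\min},x_{\max}]\times[y_{\min},y_{\max}]$, the only ways to miss the line are $x_{\max}<y_{\min}$ (forcing $x(w)<y(w)$) or $y_{\max}<x_{\min}$ (forcing $y(v)<x(v)$), both contradictions. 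On the other hand, if $P\cap(-P^*)=\emptyset$ then $P\cup(-P^*)$ misses the line $x=y$ (a point of $P$ on that line equals its own mirror image, hence lies in $-P^*$), so the whole line lies in the unbounded complementary region, whose rotation number is zero. By Lemma \ref{lem4-15} the oriented rectangles of any transformation cover each region of $\mathbb{R}^2\setminus\partial(P\cup(-P^*))$ with signed multiplicity equal to its rotation number, so $2\sum_j|\mathrm{Area}(R_j)|$, the total unsigned area of all $R_j$ and $-R_j^*$, is at least $\mathrm{Area}|P\cup(-P^*)|$, with equality only if no rectangle meets a zero-rotation region; this is exactly how the paper argues in the proof of Proposition \ref{prop3-18}. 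Since your $R_1$ covers an open set of zero-rotation points, any transformation containing it has area strictly greater than $\tfrac12\mathrm{Area}|P\cup(-P^*)|=\mathrm{Area}|P|$, so it is neither of area $\mathrm{Area}|P|$ nor minimal. In case (1) the contradiction is even more immediate: a connected simple $P$ satisfies condition (2) of Theorem \ref{thm3-10}, so with disjointness you are inside the hypotheses of Lemma \ref{lem3-14} and Corollary \ref{cor4-12}, which state that every minimal transformation \emph{is} presented by a transformation of $P$. In case (2), reusing the polytope of Proposition \ref{prop3-18} fails for an additional reason: by the corollary following that proposition, for that polytope (with disjoint copies) no transformation at all, mixing or not, attains the lower bound, so there is nothing for your mixing rectangle to ``bypass''.

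What makes the proposition true, and what the paper's examples (Figures \ref{fig4-8a}, \ref{fig4-9a}, \ref{fig4-8}, \ref{fig4-9}) actually exploit, is that $P$ and $-P^*$ \emph{overlap}: then regions straddling the diagonal have rotation number $\pm2$ for $P\cup(-P^*)$, which is exactly what a mixing pair $R_1\cup(-R_1^*)$ (covering a neighbourhood of the diagonal twice with one sign) needs in order to be compatible with the no-cancellation condition for minimality. A minimal instance of case (1): take $\Delta$ the separated arcs $(1,2),(3,4)$ and $\Delta'$ the nested arcs $(1,4),(2,3)$; then $P=[1,3]\times[2,4]$ and $-P^*=[2,4]\times[1,3]$ overlap in $[2,3]^2$, $\mathrm{Area}|P|=4=\tfrac12\mathrm{Area}|P\cup(-P^*)|$, the one-step within-$P$ transformation by $R((1,2),(3,4))$ has area $4$, and the two-step transformation starting with the mixing rectangle $R_1=R((1,2),(4,3))$ (area $3$, passing through the crossing diagram) followed by $R_2=R((1,3),(2,4))$ (area $1$) also has area $4$. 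The genuine case-(2) mechanism is likewise overlap-based: $P$ is a connected non-simple polytope whose boundary consists of two overlapping, oppositely oriented loops; by Lemma \ref{lem4-15} any within-$P$ transformation must cover each loop entirely, so its area is at least the sum of the two loop areas (counting the overlap twice), which strictly exceeds $\mathrm{Area}|P|$ (which counts the zero-rotation overlap not at all), while a transformation reaching across to $-P^*$ realizes $\mathrm{Area}|P|$. Your ``jamming'' argument is the right kind of tool for showing within-$P$ transformations fail, but without abandoning disjointness you cannot produce the minimal mixing transformation that the proposition asserts to exist.
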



\begin{proof}
Case (1). We consider lattice presentations with associated lattice polytopes as illustrated  in Figure \ref{fig4-8a}, where the shadowed polytope is $P$, and the vertices of $\mathrm{Ver}_0(P \cup (-P^*))$ (respectively $\mathrm{Ver}_1(P\cup (-P^*))$) are indicated by black circles (respectively X marks), and the numbers in regions divided by $\partial (P\cup (-P^*))$ denote the rotation numbers. Then, the transformation as shown in Figure \ref{fig4-9a} satisfies $\sum_j|\mathrm{Area}(R_j)|=\mathrm{Area}|P|$ but $R_1=R(v,w)$ satisfies $v \in  \mathrm{Ver}_0(P)$ and $w \in  \mathrm{Ver}_0(-P^*)$. The latter statement is obvious, and also follows from Theorem \ref{thm3-10}. 

Case (2). 
We consider lattice presentations with associated lattice polytopes as illustrated  in Figure \ref{fig4-8a} and the transformation as shown in Figure \ref{fig4-9a} satisfies the required condition. The latter statement is obvious, since the minimal area of the lattice polytope $P$ is the sum of the two areas sorrounded by $\partial P$ by Theorem \ref{thm3-10}. 
\end{proof}

\begin{figure}[ht]\centering
\includegraphics*[height=3cm]{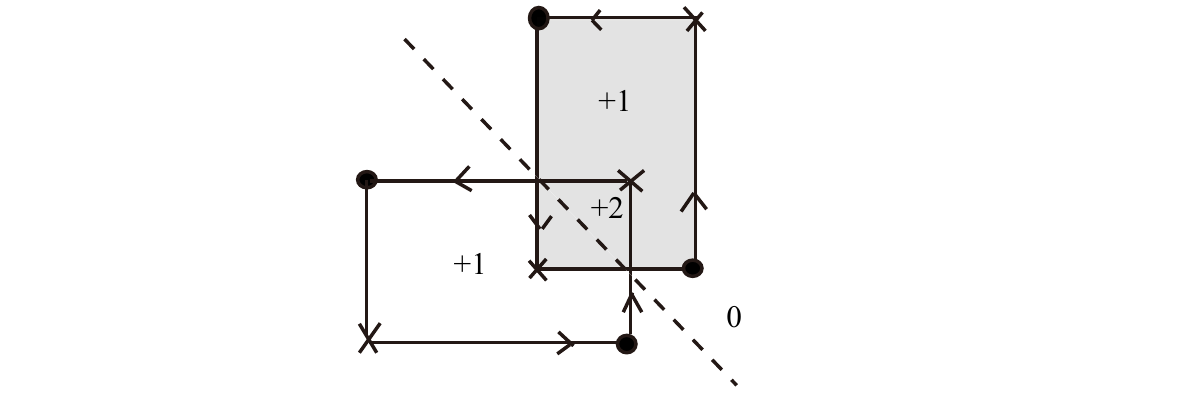}
\caption{Example of lattice polytopes $P \cup (-P^*)$ of Proposition \ref{prop4-13} Case (1), where the vertices of $\mathrm{Ver}_0(P\cup (-P^*))$ (respectively $\mathrm{Ver}_1(P\cup (-P^*))$) are indicated by black circles (respectively X marks), and the numbers in regions divided by $\partial P$ denote the rotation numbers.}
\label{fig4-8a}
\end{figure}

\begin{figure}[ht]
\centering
\includegraphics*[height=2.5cm]{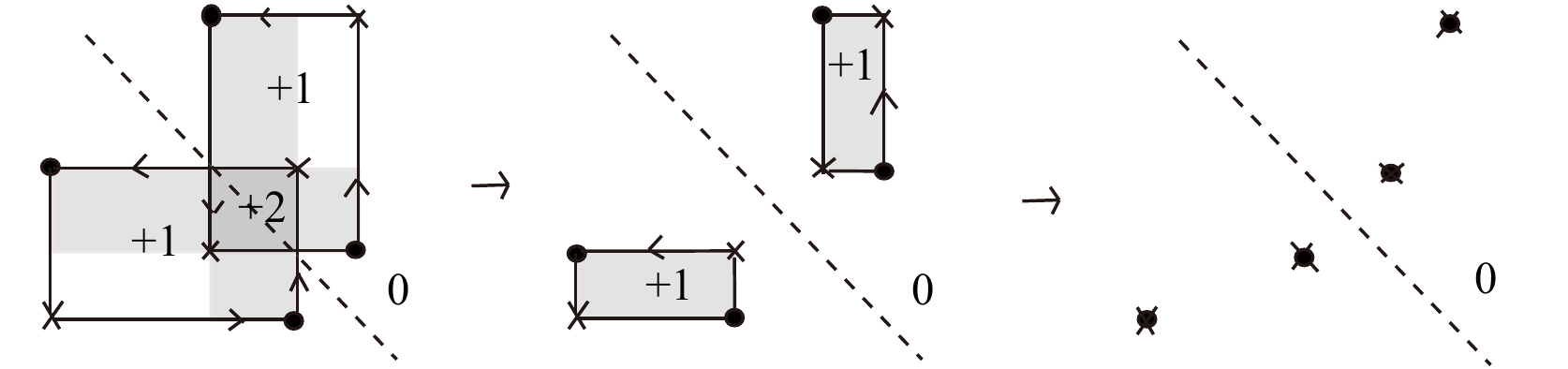}
\caption{Example of a transformation of lattice polytopes $P \cup (-P^*)$ given by Figure \ref{fig4-8a}, which has a minimal area but is not presented by a transformation of $P$, where the vertices of $\mathrm{Ver}_0(P\cup (-P^*))$ (respectively $\mathrm{Ver}_1(P\cup (-P^*))$) are indicated by black circles (respectively X marks).}
\label{fig4-9a}
\end{figure}

\begin{figure}[ht]\centering
\includegraphics*[height=4cm]{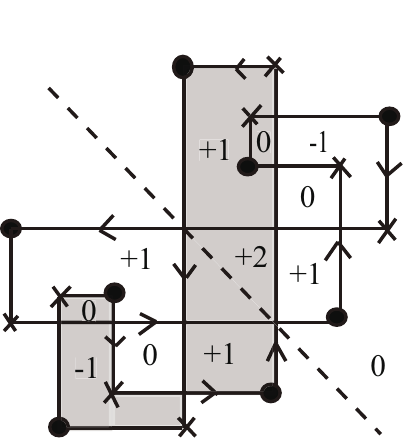}
\caption{Example of lattice polytopes $P \cup (-P^*)$ of Proposition \ref{prop4-13} Case (2), where the vertices of $\mathrm{Ver}_0(P\cup (-P^*))$ (respectively $\mathrm{Ver}_1(P\cup (-P^*))$) are indicated by black circles (respectively X marks), and the numbers in regions divided by $\partial P$ denote the rotation numbers.}
\label{fig4-8}
\end{figure}

\begin{figure}[ht]
\centering
\includegraphics*[height=7cm]{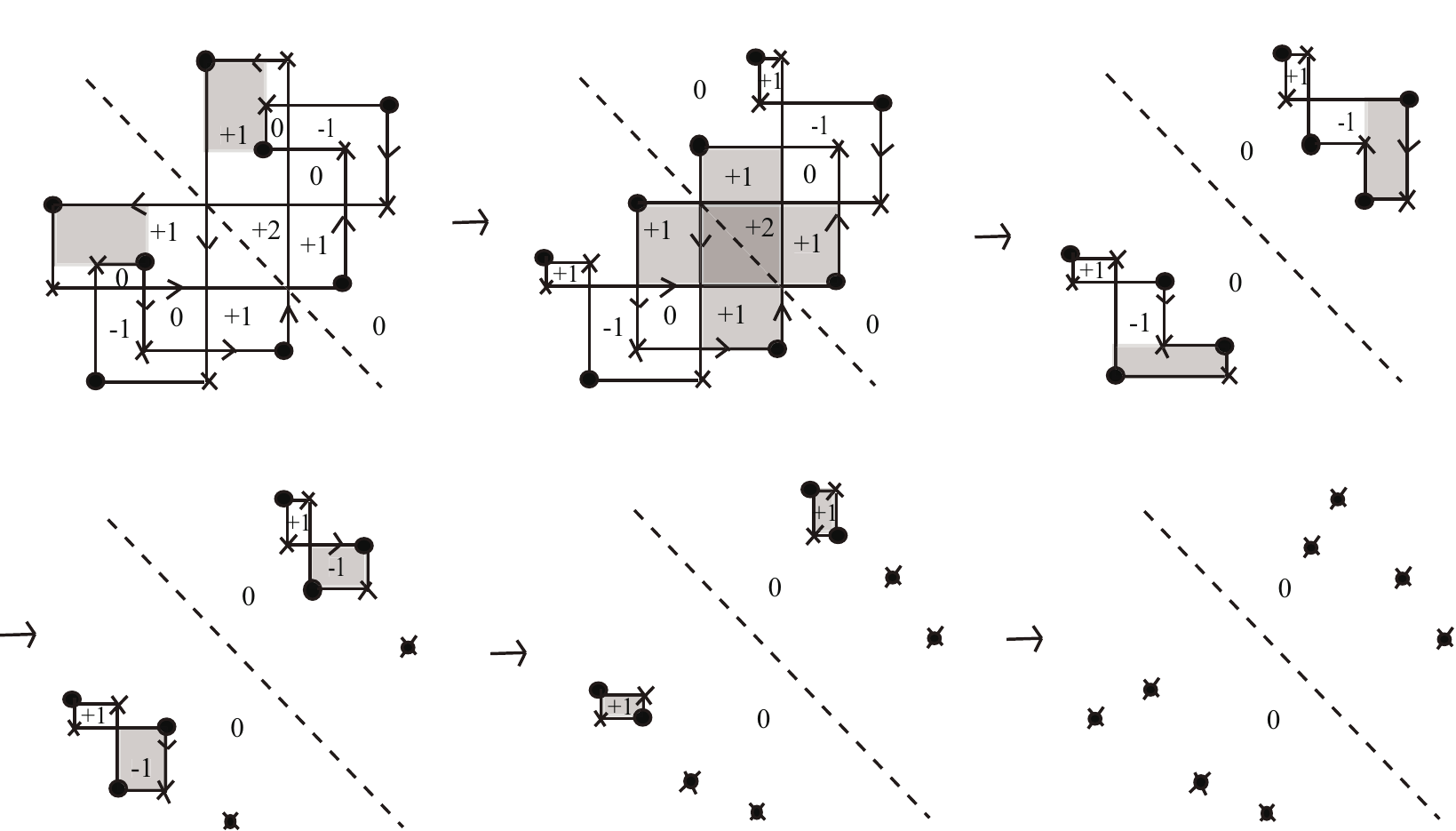}
\caption{Example of a transformation of lattice polytopes $P \cup (-P^*)$ given by Figure \ref{fig4-8}, which has a minimal area but is not presented by a transformation of $P$, where the vertices of $\mathrm{Ver}_0(P\cup (-P^*))$ (respectively $\mathrm{Ver}_1(P\cup (-P^*))$) are indicated by black circles (respectively X marks).}
\label{fig4-9}
\end{figure}

\begin{corollary}\label{cor4-12}
Let $\Delta, \Delta'$ be two lattice presentations of partial matchings. 
We consider a transformation $\Delta \to \Delta'$ with minimal area. Then, if $P$ and $-P^*$ are disjoint and $P$ satisfies the condition $(1)$ or $(2)$ of Theorem \ref{thm3-10}, then $\Delta \to \Delta'$ is described by a transformation of $P$. 
Further, in this case, the transformations consist of those which, as chord diagrams, changes nesting arcs to crossing arcs and vice versa. 
\end{corollary}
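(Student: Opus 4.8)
The plan is to read off both assertions from Theorem \ref{thm3-10} and its proof, the essential input being that the disjointness of $P$ and $-P^*$ makes the lower bound exact. First I would note that when $P\cap(-P^*)=\emptyset$ the regions of $\mathbb{R}^2\setminus\partial(P\cup(-P^*))$ of nonzero rotation number split into those coming from $P$ and those coming from $-P^*$ with no overlap, so that $\mathrm{Area}|P\cup(-P^*)|=\mathrm{Area}|P|+\mathrm{Area}|{-P^*}|=2\,\mathrm{Area}|P|$, using $\mathrm{Area}|P|=\mathrm{Area}|{-P^*}|$ from the proof of Theorem \ref{thm3-10}. Then (\ref{eq1}) reads $\sum_j|\mathrm{Area}(R_j)|\ge\mathrm{Area}|P|$, in agreement with (\ref{eq2}), and since condition $(1)$ or $(2)$ guarantees a transformation realizing equality, the minimal area of a transformation $\Delta\to\Delta'$ is exactly $\mathrm{Area}|P|$.

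For the first assertion I would invoke the transformation constructed in the proof of Theorem \ref{thm3-10}: under condition $(1)$ or $(2)$ the steps there transform $P$ alone, $P\to\mathrm{Ver}_1(P)$, using only rectangles $R_j$ whose two vertices lie in $\mathrm{Ver}_0$ of the current $P$-side polytope, and they achieve $\sum_j|\mathrm{Area}(R_j)|=\mathrm{Area}|P|$. Because $P$ and $-P^*$ are disjoint, this transformation of $P$ never meets $-P^*$ and, together with its mirror image, induces $P\cup(-P^*)\to\mathrm{Ver}_1(P\cup(-P^*))$; since $\Delta=\mathrm{Ver}_0(P\cup(-P^*))$ and $\Delta'=\mathrm{Ver}_1(P\cup(-P^*))$, this is a minimal transformation $\Delta\to\Delta'$ described by a transformation of $P$, which is the first claim (consistent with the existence statement in Proposition \ref{prop4-13} Case $(1)$).

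For the \emph{further} clause I would argue as in the proof of Proposition \ref{prop3-18}: by the equality case of Lemma \ref{lem4-15}, in a minimal transformation every rectangle $R_j$ must be disjoint from every region of rotation number zero. Since $P$ and $-P^*$ are disjoint, a neighborhood of the line $x=y$ separating them carries rotation number zero, so no $R_j$ crosses that line; each $R_j=R(v,w)$ therefore lies strictly on one side of $x=y$, say in $C_+$. By Proposition \ref{prop2-3} this forces the two arcs presented by $v$ and $w$ to be non-separated, hence nesting or crossing, and the transformation by $R_j$ exchanges the two diagonals of $R_j$, interchanging a type I rectangle with a type IV rectangle; by Proposition \ref{prop2-5} together with the analogous characterization of nesting, this is exactly the exchange of crossing and nesting arcs. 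The main obstacle is precisely this last step — ruling out the separated-arc moves of Lemma \ref{lem4-2} — and the cleanest route is the zero-rotation-number argument above, which confines every rectangle of a minimal transformation to the region of $P$ and hence to one side of the diagonal.
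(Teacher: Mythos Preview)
Your reading of the first assertion is too weak. The sentence ``We consider a transformation $\Delta\to\Delta'$ with minimal area'' takes an \emph{arbitrary} minimal-area transformation, and the conclusion is that this particular transformation is already described by a transformation of $P$. (This universal reading is what is used in Corollary~\ref{cor3-15}, where one concludes $f(\Delta,\Delta')=f(P)$.) What you do---exhibit the transformation built in the proof of Theorem~\ref{thm3-10} and observe it is a transformation of $P$---only gives existence. The paper closes this gap by invoking Lemma~\ref{lem3-14}: under the hypotheses, for \emph{any} minimal-area sequence the sets $V_j$ can be defined at every step, i.e.\ at each stage both diagonal vertices of the rectangle lie on the $P$-side. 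That lemma is the real content of the first assertion, and your argument does not supply it.

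Your zero-rotation-number idea in the ``further'' clause is headed in the right direction but does not fill this gap either. Even granting that points on the line $x=y$ have rotation number zero for $P\cup(-P^*)$ and that a minimal rectangle avoids all zero-rotation regions, this does not force the diagonal vertices $v_j,w_j$ of $R_j$ to lie in $V_{j-1}$ rather than in its mirror $(V_{j-1})^*$: the lattice polytope $P$ need not sit entirely in $C_+$, and ``$R_j$ lies on one side of $x=y$'' is not the same as ``$R_j$ is a move of the $P$-side sequence''. The paper's order of argument is the reverse of yours: first Lemma~\ref{lem3-14} gives that any minimal transformation is a transformation of $P$; then, since such rectangles (by Lemma~\ref{lem4-15} and the area equality) fill exactly the region of $P$, each $R$ is disjoint from $-R^*$; Proposition~\ref{prop2-3} then forces the two arcs involved to be non-separated both before and after the move, so each step is a nesting--crossing exchange. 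Your final step (interchanging type I/III with type II/IV) is fine once that disjointness is in hand.
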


\begin{proof}
By Lemma \ref{lem3-14} and the proof of Theorem \ref{thm3-10}, if $P$ and $-P^*$ are disjoint and $P$ satisfies the condition $1$ or $2$ of Theorem \ref{thm3-10}, then $\Delta \to \Delta'$ is described by a transformation of $P$. 

Since $P$ and $-P^*$ are disjoint, any used rectangle $R$ is disjoint with $-R^*$, and it follows from Proposition \ref{prop2-3} that the two arcs of presented chord diagram are non-separated before and after the transformation, and hence we see that the transformation changes nesting arcs  to crossing arcs and vice versa.  
\end{proof}

Let $f(\Delta, \Delta')$ (respectively $f(P(\Delta, \Delta'))$) be the number of transformations $\Delta \to \Delta'$ with minimal area (respectively the number of transformations of $P=P(\Delta, \Delta')$ with minimal area). Note that by definition,  $f(\Delta, \Delta')=f(P\cup (-P^*))$. 

By Lemma \ref{lem3-14} and the existence of a transformation with minimal area by Theorem  \ref{thm3-10}, we have the following corollary. 
\begin{corollary}\label{cor3-15}
 If a lattice polytope $P=P(\Delta, \Delta')$ satisfies the condition $1$ or $2$ of Theorem \ref{thm3-10}, then  
the number of transformation with minimal area $f(\Delta, \Delta')$ is equal to $f(P)$. Further, by definition of equivalence, for equivalent lattice polytopes $P$ and $P'$, $f(P)=f(P')$. 
Thus, for pairs of lattice presentations $\Delta_i$, $\Delta_i'$ $(i=1,2)$ such that their lattice polytopes $P_i=P(\Delta_i, \Delta_i')$ $(i=1,2)$ satisfies that $P_i$ and $-P_i^*$ are disjoint and $P_i$ satisfies the condition $1$ or $2$ of Theorem \ref{thm3-10} $(i=1,2)$, 
$f(\Delta_1, \Delta_1')=f(\Delta_2, \Delta_2')$ if $P_1$ and $P_2$ are equivalent. 
\end{corollary}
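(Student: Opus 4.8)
The plan is to establish the three assertions in turn, reducing the final identity $f(\Delta_1, \Delta_1') = f(\Delta_2, \Delta_2')$ to the two displayed equalities $f(\Delta, \Delta') = f(P)$ and $f(P) = f(P')$, and then combining them.

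First, for $f(\Delta, \Delta') = f(P)$, I would start from the definitional identity $f(\Delta, \Delta') = f(P \cup (-P^*))$ recorded just before the statement: a transformation $\Delta \to \Delta'$ is by definition a transformation of the pair $P \cup (-P^*)$ by rectangles $R_j \cup (-R_j^*)$, so counting minimal-area transformations on the two sides gives the same number. It then remains to match minimal-area transformations of the pair with minimal-area transformations of the single polytope $P$. This is exactly what Lemma \ref{lem3-14} provides (and is reflected in Corollary \ref{cor4-12}): under condition $(1)$ or $(2)$ of Theorem \ref{thm3-10}, every minimal-area transformation of the pair is described by a transformation of $P$ alone, the mirror transformation of $-P^*$ by $-R_j^*$ being forced. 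Since a transformation of $P$ by $R_j$ conversely induces a transformation of the pair by $R_j \cup (-R_j^*)$ of the same area, and Theorem \ref{thm3-10} guarantees that minimal-area transformations actually exist, the assignment sending the family $\{R_j\}$ to $\{R_j \cup (-R_j^*)\}$ is a bijection between the two sets of minimal-area transformations, whence $f(\Delta, \Delta') = f(P)$.

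Second, for the invariance $f(P) = f(P')$ under equivalence, I would exploit the geometric description of equivalence given by Proposition \ref{prop3-4} together with the proposition that $P \sim -P^*$. The equivalence relation is generated by right or left multiplication by $\pi$ and by passage to inverses, which by Proposition \ref{prop3-4} are realized respectively by the mirror reflection in the line $x = n+1$, the $\pi/2$-rotation, and the reflection in the diagonal $x = y$ underlying $-P^*$. Each of these is an isometry or reflection of $\mathbb{R}^2$ that carries lattice polytopes to lattice polytopes, carries each rectangle $R(v,w)$ to a rectangle while preserving $|\mathrm{Area}(R)|$, and carries a transformation sequence to a transformation sequence. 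I would therefore check that each generator induces a bijection on the set of minimal-area transformations: an orientation-preserving move (preserving the roles of $\mathrm{Ver}_0$ and $\mathrm{Ver}_1$) sends a minimal-area transformation $\mathrm{Ver}_0(P) \to \mathrm{Ver}_1(P)$ directly to one of $P'$, whereas an orientation-reversing move (such as $-P^*$, or a move interchanging the $x$- and $y$-directions) sends it to the reversed sequence, obtained by replacing each $R(v,w)$ by $R(\tilde v, \tilde w)$ and reversing the order. Because the total area $\sum_j |\mathrm{Area}(R_j)|$ is preserved by all of these operations, minimal-area transformations correspond to minimal-area transformations, so the counts agree and $f(P) = f(P')$. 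The last assertion is then immediate: the first step gives $f(\Delta_i, \Delta_i') = f(P_i)$ for $i = 1, 2$, and when $P_1 \sim P_2$ the second step gives $f(P_1) = f(P_2)$, whence $f(\Delta_1, \Delta_1') = f(\Delta_2, \Delta_2')$.

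I expect the main obstacle to be the second step, namely the verification that passage to $-P^*$ and the $\pi/2$-rotation genuinely induce bijections on minimal-area transformations. The delicate point is the orientation bookkeeping: these moves interchange the $x$- and $y$-directions and\slash or the roles of initial and terminal vertices, so a forward transformation is sent to a reversed one, and one must confirm that reversing a transformation sequence — reading the rectangles backwards, each $R(v,w)$ replaced by the rectangle $R(\tilde v,\tilde w)$ that restores $v,w$ — is again a legitimate transformation realizing the same minimal total area. Once this is in place, invariance of $f$ under each generator, and hence under the whole equivalence relation, follows formally.
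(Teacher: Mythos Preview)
Your proposal is correct and follows essentially the same approach as the paper, which dispatches the corollary in a single line by citing Lemma~\ref{lem3-14} and the existence part of Theorem~\ref{thm3-10} for the identity $f(\Delta,\Delta')=f(P)$, and then declares $f(P)=f(P')$ to hold ``by definition of equivalence.'' Your write-up is more detailed on the second step---unpacking the generators of the equivalence relation via Proposition~\ref{prop3-4} and checking that each induces a bijection on minimal-area transformation sequences---but this is exactly the content the paper leaves implicit, and your identification of the orientation-reversal bookkeeping as the only delicate point is accurate.
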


\section{Reduced graphs}\label{sec-red}
In this section, we introduce the notion of the reduced graph of a lattice polytope. Then we can determine whether or not a lattice polytope has a transformation with minimal area by studying its reduced graph (Theorem \ref{thm-red}). 

\begin{definition}\label{def-graph}
For a lattice polytope $P$, let $\partial P$ be the boundary of $P$ equipped with the initial vertices $\mathrm{Ver}_0(P)$, which are one of the two types $\mathrm{Ver}_0(P)$ and $\mathrm{Ver}_1(P)$, and with orientations of edges, and labels assigned to regions divided by $\partial P$ denoting the rotation number of each region. We regard $\partial P$ as a graph of a finite numebr of immersed oriented circles with transverse intersection points and equipped with several vertices and integral labels for each divided region. 
Then, we consider the following deformations, where an {\it arc} is  a connected component of $\partial P$ minus the intersection points. 

\begin{enumerate}[(I)]
\item
Reduce several vertices on an arc to one vertex on the arc (see Figure \ref{2017-0516-01} (I)).
\item
The local move illustrated in Figure \ref{2017-0516-01} (II).
\item
The local move illustrated in Figure \ref{2017-0516-01} (III).
\item
Remove a closed arc bounded with a 2-disk $E$ whose interior is disjoint with the graph and the label assigned to $E$ is not zero (see Figure \ref{2017-0516-01} (IV)).
\begin{figure}[ht]
\centering
\includegraphics*[height=4cm]{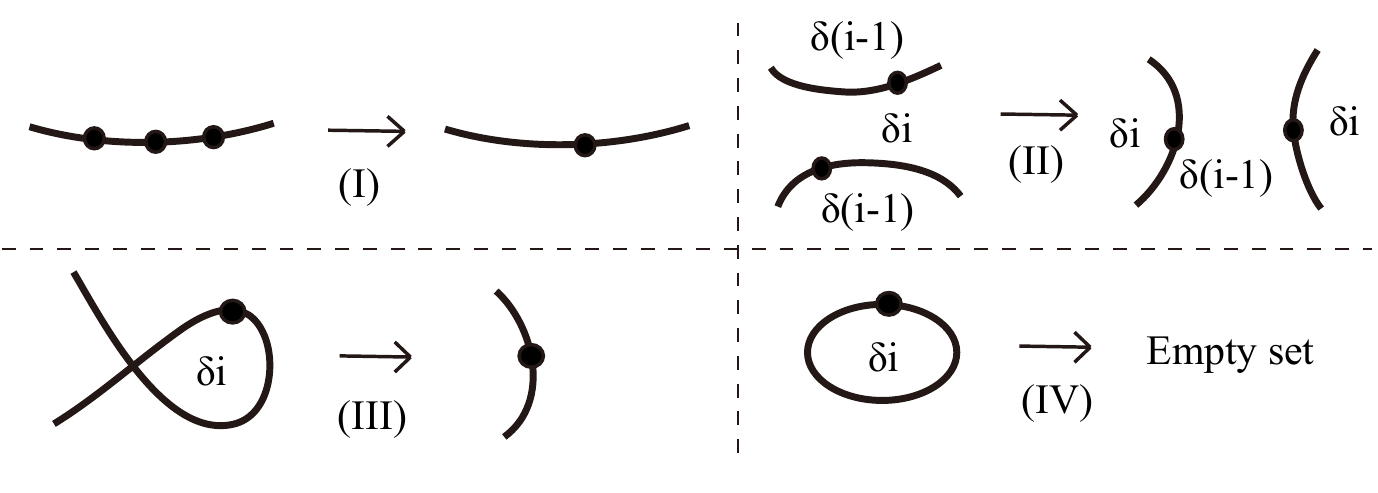}
\caption{The local deformations (I)--(IV), where $\delta \in \{+1, -1\}$ and $i$ is a positive integer, and we omit the orientationsof the arcs and some of the labels of the regions. The deformation (II) is applicable only if the resulting graph admits the induced orientation.}
\label{2017-0516-01}
\end{figure}
\end{enumerate}

We consider a graph obtained from the deformations (I)--(IV) such that no more deformations can be applied. It is unique up to an ambient isotopy of $\mathbb{R}^2$ by Lemma \ref{lem0515}. We call the graph the {\it reduced graph} of a lattice polytope $P$. 
\end{definition}

For example, we obtain in Figure \ref{2017-0517-01} the reduced graph of the lattice polytope given in the leftmost figure of Figure \ref{fig4-7} in Proposition \ref{prop3-18}. 
\begin{figure}[ht]
\centering
\includegraphics*[width=13cm]{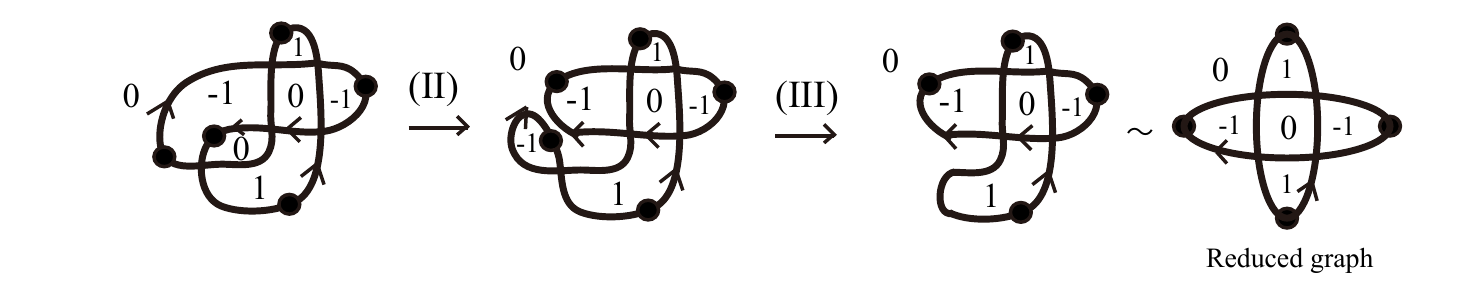}
\caption{Obtaining the reduced graph of the lattice polytope given in the leftmost figure of Figure \ref{fig4-7}.}
\label{2017-0517-01}
\end{figure}

\begin{lemma}\label{lem0515}
The reduced graph of a lattice polytope is unique up to an ambient isotopy of $\mathbb{R}^2$.
\end{lemma}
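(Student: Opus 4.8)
The plan is to treat the reduction from $\partial P$ to its reduced graph as an abstract rewriting system whose objects are the decorated planar graphs (immersed oriented circles with transverse crossings, marked vertices, and integral region labels) considered up to ambient isotopy of $\mathbb{R}^2$, and whose one-step rewrites are the local deformations (I)--(IV). To prove that the terminal object is unique it suffices, by Newman's lemma (the diamond lemma), to verify two things: that the system is \emph{terminating} (there is no infinite sequence of deformations) and that it is \emph{locally confluent} (whenever two deformations apply to a single graph $G$, producing $G_1$ and $G_2$, there is a graph $G_3$ obtainable from both $G_1$ and $G_2$ by further deformations). Termination together with local confluence gives confluence, and a confluent terminating system has a unique normal form, which is exactly the assertion that the reduced graph is well defined up to ambient isotopy.

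For termination I would introduce a monovariant $c(G)$ taking values in a well-ordered set and check that each of (I)--(IV) strictly decreases it. A natural candidate is a lexicographically ordered tuple such as $c(G)=(t(G),\,v(G),\,r(G))$, where $t(G)$ is the number of transverse intersection points, $v(G)$ the number of marked vertices, and $r(G)$ an auxiliary count (for instance the number of bounded regions, or of immersed circles) calibrated so that any move which does not obviously drop the first two coordinates reduces a later one. Move (I) strictly lowers $v(G)$, the bigon-type move (II) lowers $t(G)$, and the disk-removal (IV) lowers $r(G)$ together with $t$ or $v$; the care needed is to choose the weights so that the triangle-type move (III), which may preserve the crossing count, still strictly decreases $c$ in the direction in which it is permitted to be applied (recall that (II), and hence the other oriented moves, are applicable only when the result admits the induced orientation). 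Exhibiting such a measure shows every chain of deformations stops, so reduced graphs exist.

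The heart of the argument, and the step I expect to be the main obstacle, is local confluence, which I would prove by a critical-pair analysis. Two applicable deformations are supported in two disks $D_1,D_2\subset\mathbb{R}^2$; if $D_1\cap D_2=\emptyset$ the deformations act on disjoint portions of the graph and trivially commute, so $G_1$ and $G_2$ rewrite to a common $G_3$ in one further step each. It therefore remains to enumerate the finitely many essentially different ways the supports of (I)--(IV) can overlap --- several vertices interacting with a single arc, two reducible loops sharing an arc, a disk as in (IV) abutting the site of an (II) or (III) move, and so on --- and to exhibit, for each such overlap, a common graph reached from the two outcomes. The delicate points are the bookkeeping of the region labels and of the orientation condition guarding move (II), since a resolution counts only if every intermediate graph genuinely admits the induced orientation and carries the correct labels; I would organize the cases by the unordered pair of move-types involved and, wherever possible, reduce each overlap to a disjoint or already-resolved configuration. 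Once every critical pair is resolved, Newman's lemma applies and yields the uniqueness up to ambient isotopy of $\mathbb{R}^2$.
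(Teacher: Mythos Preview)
Your approach is correct and is, at its core, the same one the paper uses: a critical-pair analysis showing that overlapping applications of (I)--(IV) can be resolved to a common result. The difference is one of framing and completeness. You cast the problem explicitly as an abstract rewriting system, invoke Newman's lemma, and therefore separate out a termination argument (via a lexicographic monovariant) from the local-confluence argument; the paper's proof does not mention termination at all and simply checks a handful of overlap cases by hand --- that (I) commutes with (II), that (II) and (III) can never both be applicable at the same spot because of the label arithmetic, and that when (II) and (IV) overlap the two outcomes agree after an application of (I). Your version is more systematic and would, if carried out, cover the remaining pairs (such as (I)--(III), (I)--(IV), (III)--(IV), and same-type overlaps) that the paper tacitly treats as obvious. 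The paper's observation that (II) and (III) are never simultaneously applicable is a useful shortcut you might incorporate: it eliminates one critical pair outright rather than requiring a joint resolution. Conversely, your explicit termination argument fills a gap the paper leaves implicit; without it, Newman's lemma does not apply and the ad hoc confluence checks alone do not formally yield uniqueness of the normal form.
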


\begin{proof}
It is obvious that the deformation (I) commutes the deformation (II). Consider a local graph as illustrated in the left figure of Figure \ref{2017-0517-02}. Let $\delta i$ be the label of the region encircled by an arc whose closure is  a circle, where $\delta \in \{+1, -1\}$ and $i$ is a positive integer. Then, the label of the adjoining region is $\delta (i-1)$. 
Hence there does not exists a local graph where both deformations (II) and (III) are applicable; see  Figure \ref{2017-0517-02}. 
Consider a local graph where both deformations (II) and (IV) are applicable. Then, the result by the deformation (II) is equal to the result by deformation (IV) after applying the deformation (I) as illustrated in Figure \ref{2017-0517-03}. 
Thus the reduced graph is unique. 
\end{proof}

\begin{figure}[ht]
\centering
\includegraphics*[height=2cm]{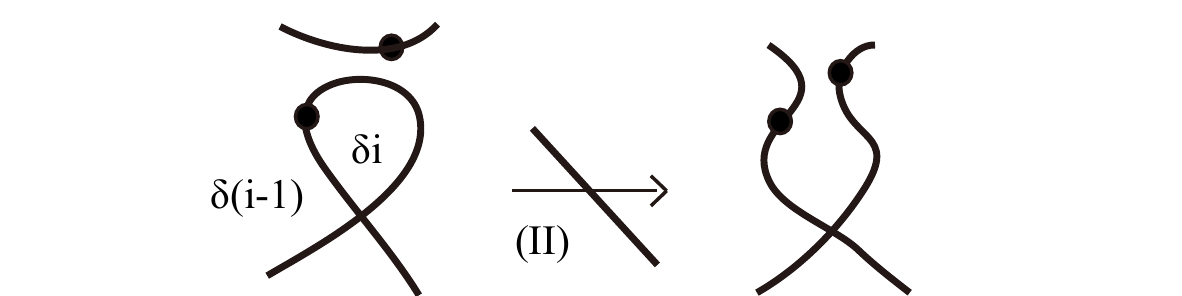}
\caption{The deformation (II) cannot be applied.}
\label{2017-0517-02}
\end{figure}
\begin{figure}[ht]
\centering
\includegraphics*[height=3cm]{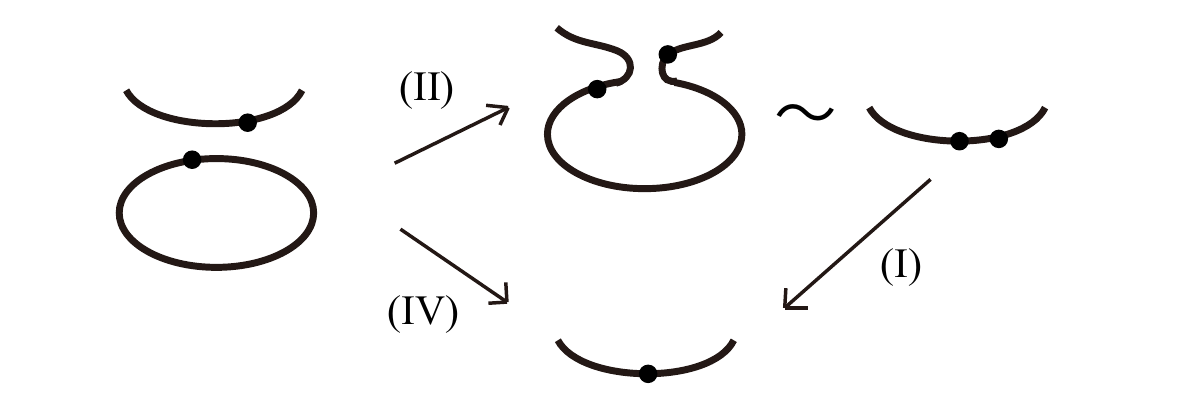}
\caption{The relation between deformations (II) and (IV).}
\label{2017-0517-03}
\end{figure}

By the proof of Theorem \ref{thm3-10}, we have the following theorem. We give the proof at the end of the next section. 
\begin{theorem}\label{thm-red}
Let $P$ be a lattice polytope. 
Then, the reduced graph of $P$ is an empty graph if and only if there exists a transformation of $P$ with the minimal area $\mathrm{Area}|P|$. 
\end{theorem}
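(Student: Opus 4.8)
The plan is to establish both directions of the equivalence by connecting the reduced graph to the obstruction structure that already appeared in the proof of Theorem \ref{thm3-10} and Proposition \ref{prop3-18}. First I would set up the key invariance: the deformations (I)--(IV) preserve the property \emph{\lq\lq there exists a transformation of $P$ with minimal area $\mathrm{Area}|P|$.\rq\rq} The essential content is that $\sum_j|\mathrm{Area}(R_j)|\geq \mathrm{Area}|P|$ always holds (this is the inequality underlying (\ref{eq2}) and is controlled by Lemma \ref{lem4-15}), with equality exactly when every used rectangle $R_j$ avoids the regions of rotation number zero, and when the rectangles can be chosen so that their signed areas add up coherently without cancellation. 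So the reduced graph should be read as a bookkeeping device recording precisely which regions force unavoidable cancellation.

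The forward direction (empty reduced graph $\Rightarrow$ minimal transformation exists) I would prove by reverse induction on the deformations. The base case is the empty graph, for which $\mathrm{Area}|P|=0$ and the trivial transformation works. Then I would check that each deformation (I)--(IV), read backwards, can be realized by inserting rectangles whose absolute areas sum exactly to the area increment, so that a minimal transformation of the reduced (simpler) graph lifts to a minimal transformation of $P$. Deformation (IV), which removes a disk $E$ with nonzero label and interior disjoint from the graph, corresponds exactly to peeling off a simple connected lattice polytope as in Claim (a) of the proof of Theorem \ref{thm3-10}; deformations (II) and (III) are the local moves that let one redistribute vertices and crossings so that Claim (a)/(b) becomes applicable. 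The work here is to verify that the orientation condition flagged in the caption of Figure \ref{2017-0516-01} (that (II) is applicable only when the resulting graph admits the induced orientation) is exactly the condition ensuring the inserted rectangles carry a consistent sign, so no area cancellation occurs.

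For the converse (minimal transformation exists $\Rightarrow$ empty reduced graph), I would argue contrapositively: suppose the reduced graph is nonempty. By construction the reduced graph has no disk-region of nonzero label that is removable, so it must contain a configuration like the one isolated in Proposition \ref{prop3-18}, namely a region of rotation number zero that is unavoidably adjacent to every applicable rectangle. Using the observation (from the proof of Proposition \ref{prop3-18}, via Lemma \ref{lem4-15}) that in any minimal transformation each $R_j$ must be disjoint from the zero-rotation-number regions, I would show that a nonempty reduced graph forces some region of rotation number zero to be crossed by every candidate rectangle, whence $\sum_j|\mathrm{Area}(R_j)|>\mathrm{Area}|P|$ strictly. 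The crux is that the reduced graph being nonempty is precisely the persistence of such an obstructing zero-region after all simplifications, which is why Figure \ref{2017-0517-01} exhibits exactly the nonempty reduced graph of the obstructed polytope of Proposition \ref{prop3-18}.

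The main obstacle I anticipate is the converse direction: I must show that a nonempty irreducible graph \emph{always} contains a genuine obstruction, not merely that the obvious reduction steps have stopped. This requires a structural classification of irreducible graphs, arguing that any immersed-circle graph with coherent orientation and region labels that admits none of the moves (I)--(IV) must retain a zero-label region wedged between nonzero-label regions in a way that blocks every sign-coherent rectangle. Establishing that this local obstruction is unavoidable, rather than an artifact of a particular reduction order, is where I would spend most of the effort, leaning on the uniqueness of the reduced graph from Lemma \ref{lem0515} so that the obstruction is a well-defined invariant of $P$ rather than of the chosen reduction sequence.
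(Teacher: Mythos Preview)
Your forward direction (empty reduced graph $\Rightarrow$ minimal transformation exists) is essentially the paper's argument: each deformation (I)--(IV), read backwards, corresponds to applying an area-minimal rectangle (this is the content of Claims (a) and (b) in the proof of Theorem~\ref{thm3-10}), so a reduction sequence to the empty graph unrolls into a minimal transformation.

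Where you diverge from the paper is the converse. You propose to argue contrapositively by classifying nonempty irreducible graphs and extracting a Proposition~\ref{prop3-18}--style obstruction from each. The paper avoids this entirely: it simply observes that the correspondence between deformations and area-minimal rectangles runs in \emph{both} directions. If a minimal transformation $P\to\mathrm{Ver}_1(P)$ by rectangles $R_1,\ldots,R_k$ exists, then each step $R_j$ (being disjoint from the zero-rotation-number regions, by Lemma~\ref{lem4-15}) realizes one of the deformations (I)--(IV) on the graph of $\partial P$, and the sequence terminates at the empty graph. Hence the reduced graph is empty. No structural classification of irreducible graphs is needed.

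Your proposed route for the converse is not wrong in spirit, but the step you flag as the ``main obstacle''---showing that every nonempty irreducible graph carries a genuine zero-region obstruction---is a real gap: you have no mechanism for it beyond analogy with the single example of Proposition~\ref{prop3-18}, and irreducible graphs could in principle be complicated. The paper's observation that a minimal transformation \emph{is} a reduction sequence short-circuits this completely. You already have the needed correspondence from your forward direction; just run it the other way.
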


\section{Lemmas and Propositions}\label{sec4-3}
For a point $u$ of $\mathbb{R}^2$, we denote by $x(u)$ and $y(u)$ the $x$ and $y$-components of $u$, respectively. 

\begin{lemma}\label{lem4-15}
For a transformation of a lattice polytope $P$ by rectangles $R_j$, 
the union of rectangles $R_j$ form regions whose boundaries are the boundary of $P$. 
\end{lemma}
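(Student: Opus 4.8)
The plan is to realize the transformation as a sequence of intermediate lattice polytopes and to telescope their boundaries. Write the transformation $P\to \mathrm{Ver}_1(P)$ as $P=P_0,P_1,\ldots,P_k$, where $P_j$ is the lattice polytope with $\mathrm{Ver}_0(P_j)=t(\mathrm{Ver}_0(P_{j-1}),R_j)$ and $\mathrm{Ver}_1(P_j)=\mathrm{Ver}_1(P)$; each $P_j$ is well defined by Remark \ref{rem3-3}. The terminal object has $\mathrm{Ver}_0(P_k)=\mathrm{Ver}_1(P_k)=\mathrm{Ver}_1(P)$, so $\partial P_k$ consists only of length-zero edges and contributes nothing. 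The whole lemma then reduces to a single local identity, $\partial P_{j-1}-\partial P_j=\partial R_j$ as oriented $1$-chains, where $\partial R_j$ carries the orientation $v\to\tilde v\to w\to\tilde w\to v$ fixed in the proof of Theorem \ref{thm3-10}; summing this identity telescopes to $\sum_{j}\partial R_j=\partial P_0-\partial P_k=\partial P$.

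The first thing I would record is that a transformation by a rectangle preserves the set of $x$-components and the set of $y$-components \emph{separately}, since $t$ sends $\{x(v),x(w)\}$ and $\{y(v),y(w)\}$ to themselves. Writing $X^x,X^y$ for these two disjoint sets of $n$ values each, every $\mathrm{Ver}_0(P_j)$ has $x$-components exactly $X^x$ and $y$-components exactly $X^y$, and the same holds for $\mathrm{Ver}_1(P)$. Hence in each $P_j$ the $x$-direction edges are indexed by the heights $a\in X^y$ (the unique $\mathrm{Ver}_0$-vertex and unique $\mathrm{Ver}_1$-vertex at height $a$, joined horizontally and oriented $\mathrm{Ver}_0\to\mathrm{Ver}_1$), and the $y$-direction edges by $b\in X^x$ (oriented $\mathrm{Ver}_1\to\mathrm{Ver}_0$). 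This explicit indexing is what makes the local comparison tractable.

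For the local identity, put $v=(b_1,a_1)$, $w=(b_2,a_2)$, so that $b_1\neq b_2$, $a_1\neq a_2$, $\tilde v=(b_2,a_1)$, and $\tilde w=(b_1,a_2)$. Passing from $P_{j-1}$ to $P_j$ changes only the four edges on the lines $y=a_1$, $y=a_2$, $x=b_1$, $x=b_2$, since only the $\mathrm{Ver}_0$-vertices at those heights and verticals move; all other edges are identical and cancel in $\partial P_{j-1}-\partial P_j$. On each of these four lines the two competing edges share their $\mathrm{Ver}_1$-endpoint, so as $1$-chains on the line their difference collapses to exactly one side of the rectangle: for instance on $y=a_1$ one gets $[(b_1,a_1)\to(b_2,a_1)]$ (the shared tail past the common endpoint cancels), and similarly on the other three lines, reproducing precisely the four oriented sides of $\partial R_j$. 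This line-by-line bookkeeping is the technical heart of the argument and the step most prone to sign and orientation slips, so I would check each of the four contributions against the fixed rectangle orientation.

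Telescoping then yields $\sum_{j=1}^k\partial R_j=\partial P$, so the $2$-chain $\sum_j R_j$ formed by the rectangles has boundary $\partial P$, which is the assertion of the lemma. I would close with the remark that, because any compactly supported $2$-cycle in $\mathbb{R}^2$ vanishes (adjacent regions must carry equal multiplicities and the unbounded region has multiplicity $0$), the identity of boundaries in fact upgrades to an identity of regions $\sum_j R_j=P$ with matching rotation numbers, which is the form actually needed for the area identities in Theorem \ref{thm3-10}. The main obstacle is exactly the chain-and-orientation bookkeeping in the local step, together with the degenerate cases in which a transformed vertex coincides with its $\mathrm{Ver}_1$-counterpart (an isolated vertex); these only produce length-zero edges and hence do not disturb the telescoping.
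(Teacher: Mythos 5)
Your proof is correct, but it follows a genuinely different route from the paper's. The paper argues by a forward induction on the accumulated union of rectangles: it sets $P_0=R_1$, takes the region $P_j=P_{j-1}\cup R_j$, and shows at each step that gluing the next rectangle onto the union again yields the region of a lattice polytope; the heart of that argument is a case analysis on whether one or both of $v,w$ lie in $\mathrm{Ver}_1(P')$, a description of the intersection $\partial P'\cap\partial R(v,w)$ (points versus intervals), and the observation that the overlap intervals carry opposite orientations and hence cancel in the union. You instead telescope over the intermediate states $P_j$ of the transformation itself (current state to the fixed terminal state $\mathrm{Ver}_1(P)$), prove the local $1$-chain identity $\partial P_{j-1}-\partial P_j=\partial R_j$ by the line-by-line bookkeeping on the four lines through $v$ and $w$, sum, and then upgrade the boundary identity to an identity of regions with multiplicities via the vanishing of compactly supported $2$-cycles in $\mathbb{R}^2$. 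Your version has real advantages: it needs no reduction to connected $P$, no case analysis, it treats isolated-vertex degenerations uniformly, and it avoids the implicit assumption in the paper's induction that each new rectangle meets the union built so far (the paper justifies this only by connectivity of $P$, which does not by itself control the order in which rectangles are applied); moreover it directly produces the rotation-number (multiplicity) identity $\sum_j\epsilon_j[R_j]=\sum_i\omega(A_i)[A_i]$ that Theorem \ref{thm3-10} and Proposition \ref{prop3-18} actually use. What the paper's construction buys in exchange is structural: it exhibits each partial union of rectangles as the region of an honest lattice polytope, a fact that is reused later (for instance in the proof of Lemma \ref{lem3-14}, where the used rectangles are grouped into sets bounding simple lattice polytopes), whereas your chain-level argument does not by itself provide that intermediate polytope structure.
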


\begin{proof}
It suffices to show the case $P$ is connected. 
Put $P_0=R_1$ and let $P_j$ be a lattice polytope with region $P_j=P_{j-1} \cup R_j$
and $\mathrm{Ver}_1(P_j)=t(\mathrm{Ver}_1(P_{j-1})\cup \{v,w\}, R(v,w))$. For $P_0=R_1$, $R_1$ forms a region of $P_0$. Hence, by induction, it suffices to show that $P' \cup R(v,w)$ forms a region of a lattice polytope $P$, for a lattice polytope $P'$ and a rectangle $R(v,w)$ such that $\mathrm{Ver}_1(P)=t(\mathrm{Ver}_1(P')\cup \{v,w\}, R(v,w))$. Since we assume that $P$ is connected, at least one of $v,w$ is in $\mathrm{Ver}_1(P')$. We have two cases: (Case 1) $v \in \mathrm{Ver}_1(P')$ and $w \not\in \mathrm{Ver}_1(P')$, and (Case 2) $v,w \in \mathrm{Ver}_1(P')$. 
In both cases, $I=\partial P' \cap \partial R(v,w)$ is either $\{v\}$, or $\{v,w\}$ or an interval or intervals of $\partial R(v,w)$ containing $v$ or $w$. The orientation of edges of $\partial P'$ in the $x$-direction (respectively $y$-direction) is toward $v$ or $w$ (respectively from $v$ or $w$), and the orientation of edges of $\partial R(v,w)$ in the $x$-direction (respectively $y$-direction) is from $v$ or $w$ (respectively toward $v$ or $w$). Hence the orientations of the intervals of $I \subset \partial P'$ and $I \subset \partial R(v,w)$ are opposite, and the intervals are canceled when we take a union of rectangles, to form one region. Further, put $\tilde{v}=(x(w), y(v))$ and $\tilde{w}=(x(v), y(w))$, the other diagonal vertices of $R(v,w)$. Then, the region $P' \cup R(v,w)$ forms a lattice polytope $P$ with $\mathrm{Ver}_0(P)=\mathrm{Ver}_0(P') \cup \{w\}$ and $\mathrm{Ver}_1(P)=(\mathrm{Ver}_1(P')\backslash \{v\}) \cup \{\tilde{v}, \tilde{w}\}=t(\mathrm{Ver}_1(P')\cup \{v,w\}, R(v,w))$ for Case (1) (see Figure \ref{fig4-10}), and $\mathrm{Ver}_0(P)=\mathrm{Ver}_0(P')$ and $\mathrm{Ver}_1(P)=(\mathrm{Ver}_1(P')\backslash \{v,w\}) \cup \{\tilde{v}, \tilde{w}\}=t(\mathrm{Ver}_1(P')\cup \{v,w\}, R(v,w))$ for Case (2) (see Figure \ref{fig4-11}). Thus we have the required result. 
\end{proof}

\begin{figure}[ht]
\centering
\includegraphics*[height=8cm]{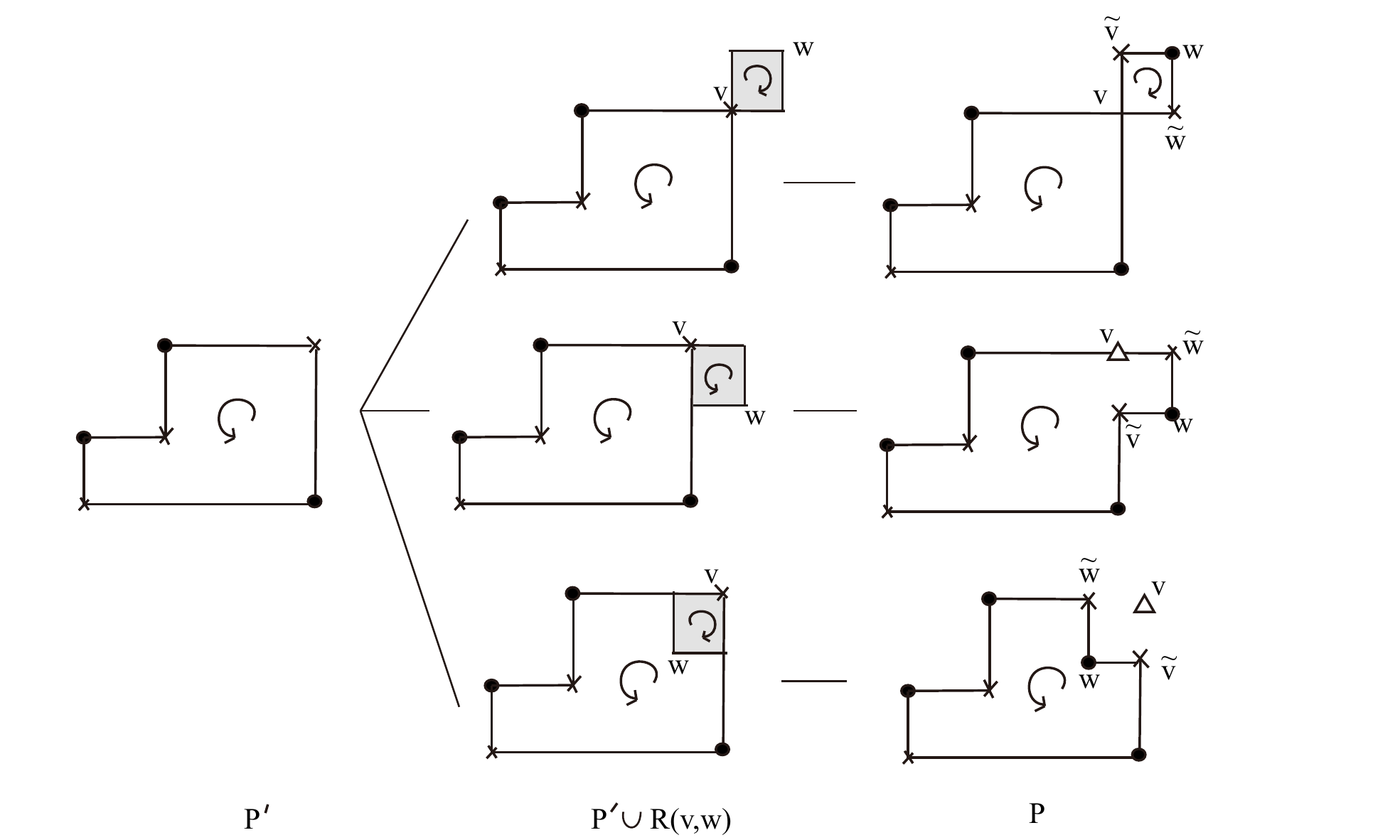}
\caption{Examples of Case (1) of lattice polytopes $P'$ and $P$ with region $P=P' \cup R(v,w)$ and $\mathrm{Ver}_1(P)=t(\mathrm{Ver}_1(P')\cup \{v,w\}, R(v,w))$, with $v \in \mathrm{Ver}_1(P')$ and $w \not\in \mathrm{Ver}_1(P')$, 
where the vertices of $\mathrm{Ver}_0(P')$, $\mathrm{Ver}_0(P)$ (respectively $\mathrm{Ver}_1(P')$, $\mathrm{Ver}_1(P)$) are indicated by black circles (respectively X marks). }
\label{fig4-10}
\end{figure}

\begin{figure}[ht]
\centering
\includegraphics*[height=6cm]{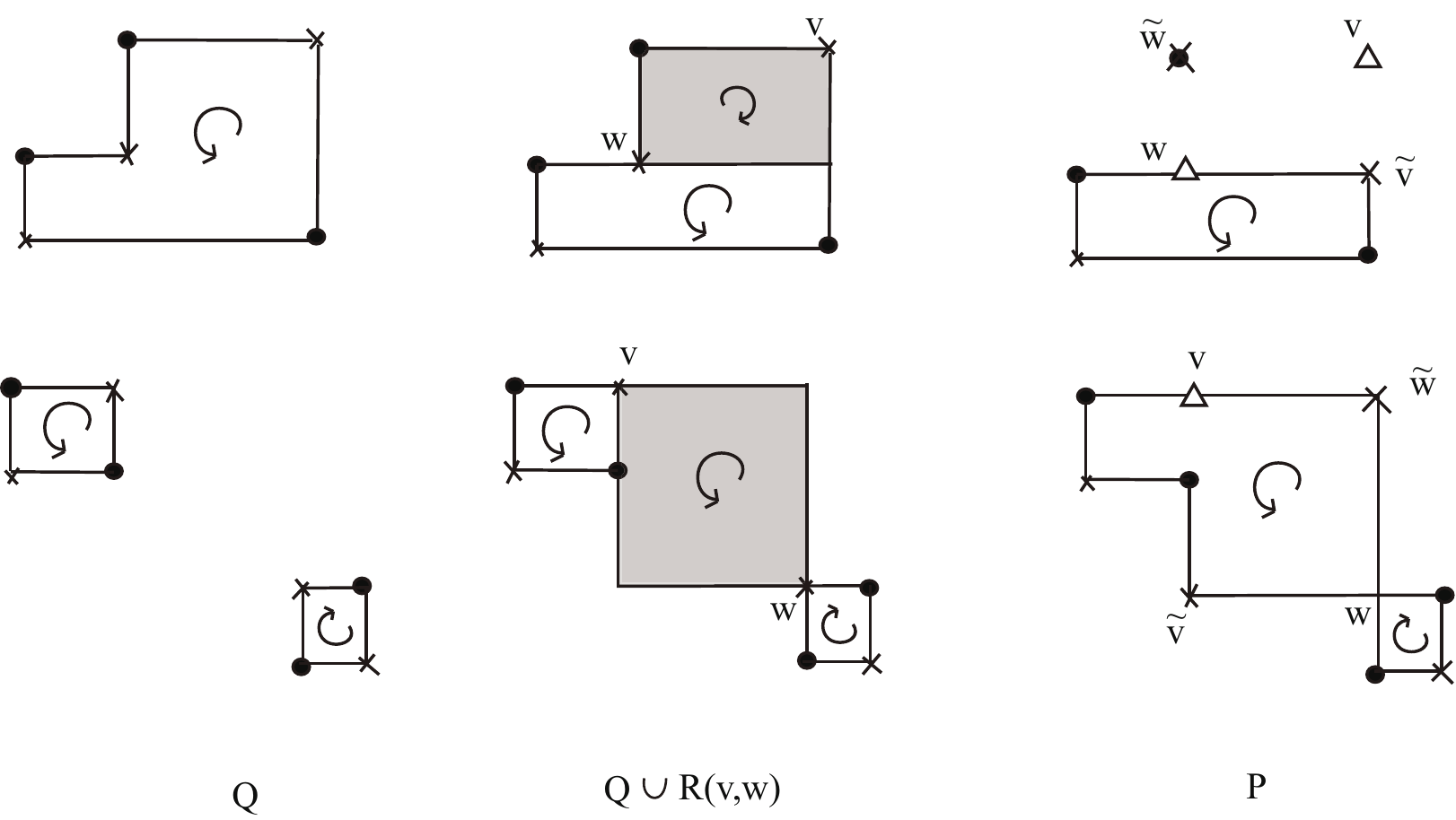}
\caption{Examples of Case (2) of lattice polytopes $P'$ and $P$ with region $P=P' \cup R(v,w)$ and $\mathrm{Ver}_1(P)=t(\mathrm{Ver}_1(P')\cup \{v,w\}, R(v,w))$, with $v,w \in \mathrm{Ver}_1(P')$, 
where the vertices of $\mathrm{Ver}_0(P')$, $\mathrm{Ver}_0(P)$ (respectively $\mathrm{Ver}_1(P')$, $\mathrm{Ver}_1(P)$) are indicated by black circles (respectively X marks). }
\label{fig4-11}
\end{figure}

\begin{proposition}\label{lem3-15}
Let $P$ be a simple connected lattice polytope. 
Then, there exists a rectangle $R(v,w)$ $(v,w \in \mathrm{Ver}_0(P))$ contained in $P$ such that $t(P, R(v,w))$ is a simple lattice polytope with region $P\backslash R(v,w)$. 
\end{proposition}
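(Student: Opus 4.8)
\emph{The plan} is to exhibit the rectangle by cutting a single ``corner slab'' off the disk-region $P$, anchored at an extreme convex initial vertex, and then to verify the cut with the orientation-cancellation mechanism already used in Lemma \ref{lem4-15}.

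First I would fix the local picture. Since $P$ is simple and connected, $\partial P$ is an embedded rectilinear circle bounding the $2$-disk $P$, and its vertices alternate between $\mathrm{Ver}_0(P)$ and $\mathrm{Ver}_1(P)$ as one traverses $\partial P$ (each edge in the $x$-direction joins a vertex of $\mathrm{Ver}_0(P)$ to one of $\mathrm{Ver}_1(P)$, and each edge in the $y$-direction does the reverse), so at every initial vertex the boundary enters along a $y$-edge and leaves along an $x$-edge. To anchor the construction I would take the horizontal edge of $\partial P$ of minimal $y$-component; both of its endpoints are convex corners of the polygon, and exactly one of them, call it $v$, lies in $\mathrm{Ver}_0(P)$. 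The possibility that the $x$-edge at $v$ leaves to the left is entirely symmetric, so I assume it leaves to the right; then $P$ occupies the first quadrant based at $v$ and the $y$-edge enters $v$ from above.

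Next I would choose the second vertex $w$. I look for $w \in \mathrm{Ver}_0(P)$ lying strictly to the northeast of $v$ (that is $x(w)>x(v)$ and $y(w)>y(v)$) with $R(v,w)\subseteq P$ and $\mathrm{int}\,R(v,w)\cap\partial P=\emptyset$, and I would single it out extremally: \emph{among all initial vertices northeast of $v$ whose rectangle $R(v,\cdot)$ is contained in $P$, take $w$ of minimal area $\mathrm{Area}(R(v,w))$.} Nonemptiness of this set I would get by growing a square from $v$ into the first quadrant until it first meets $\partial P$ and snapping to the obstructing vertex; minimality then forces $\mathrm{int}\,R(v,w)$ to miss $\partial P$, since a boundary vertex inside the open rectangle would, on following the edges of $\partial P$, produce an initial vertex strictly northeast of $v$ and inside $R(v,w)$, contradicting the choice of $w$. \textbf{This is the step I expect to be the main obstacle}: one must show the extremal rectangle is genuinely empty, which amounts to ruling out terminal vertices and reflex corners of $\partial P$ protruding into it. It is precisely this phenomenon that makes the naive choice of the \emph{maximal} empty rectangle at $v$ fail (its far corner need not be a vertex at all), and that forces $w$ to be, in general, a reflex initial corner of $\partial P$.

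Finally I would verify that $R(v,w)$ does the job. Since $\mathrm{int}\,R(v,w)$ misses $\partial P$ and $R(v,w)\subseteq P$, the rectangle meets $\partial P$ only along (sub-)edges. Orienting $R=R(v,w)$ as in the proof of Theorem \ref{thm3-10}, by $v\to\tilde v\to w\to\tilde w\to v$, and using $v,w\in\mathrm{Ver}_0(P)$, along every shared edge the orientations of $\partial R$ and $\partial P$ are opposite; hence, by the same cancellation computation as in Lemma \ref{lem4-15}, they cancel and $t(P,R(v,w))$ --- which is a lattice polytope by Remark \ref{rem3-3} --- has region exactly $P\setminus R(v,w)$, with $\mathrm{Ver}_0=(\mathrm{Ver}_0(P)\setminus\{v,w\})\cup\{\tilde v,\tilde w\}$ and $\mathrm{Ver}_1=\mathrm{Ver}_1(P)$. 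As $P$ is a $2$-disk and $R$ is a sub-disk of $P$ meeting $\partial P$ in one or two boundary arcs, the region $P\setminus R(v,w)$ is one or two $2$-disks; thus every component of $t(P,R(v,w))$ is simple, which is the assertion, and the dichotomy matches the connected/two-component analysis that Claim (a) in the proof of Theorem \ref{thm3-10} relies on.
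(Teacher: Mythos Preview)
Your outline follows the same two-step shape as the paper's proof, which simply defers to Lemmas~\ref{lem3-18} and~\ref{lem3-19}: produce a rectangle $R(v,w)\subset P$ with $v,w\in\mathrm{Ver}_0(P)$, then check that $\partial R\cap\partial P$ consists of one or two arcs so that $P\setminus R$ is one or two disks. Where you restrict $v$ to an extreme convex corner and select $w$ by a minimality principle, the paper allows \emph{any} $v\in\mathrm{Ver}_0(P)$ and builds $w$ constructively: slide from $v$ along its $x$-edge as far as possible inside $P$ to a point $u$, then thicken in the $y$-direction until the far side of the slab lies on a horizontal boundary edge $\overline{ww'}$; the initial endpoint of that edge is the desired $w$.

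There is, however, a misdiagnosis of where the difficulty lies. Once $R(v,w)\subseteq P$ as closed regions, $\mathrm{int}\,R(v,w)\cap\partial P=\emptyset$ is automatic (any interior point of $R$ has a neighborhood inside $R\subset P$ and so cannot lie on $\partial P$), so the ``main obstacle'' you flag evaporates and the minimal-area selection of $w$ is doing no work. The genuine gap is the step you pass over lightly: nonemptiness of your candidate set. ``Grow a square from $v$ and snap to the obstructing vertex'' does not secure an \emph{initial} vertex $w$ with $R(v,w)\subset P$: the first contact of the growing square with $\partial P$ will typically be mid-edge, and the nearer endpoint of that edge may well be a terminal vertex, or be positioned so that $R(v,\cdot)\not\subset P$. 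The slab construction of Lemma~\ref{lem3-18} is exactly what fills this hole, and once you have it, any such $w$ already works---your extremal choice is superfluous, since the convexity of $P$ at $v$ guarantees that $\partial R(v,w)$ contains a nondegenerate interval of $\partial P$ near $v$.

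Your verification paragraph is essentially Lemma~\ref{lem3-19}. The assertion that $R$ meets $\partial P$ in ``one or two boundary arcs'' is correct but rests on the fact (used explicitly there) that an edge of $P$ with a given $x$-component, or a given $y$-component, is unique; this is what forces each side of $\partial R$ to meet $\partial P$ in a single sub-interval through its endpoint $v$ or $w$. One small correction: with the orientation $v\to\tilde v\to w\to\tilde w\to v$ from Theorem~\ref{thm3-10}, the orientations of $\partial R$ and $\partial P$ \emph{agree} on shared edges (both have $x$-edges leaving $v,w\in\mathrm{Ver}_0$), so the relevant cancellation is in $\partial P-\partial R$, not $\partial P+\partial R$; this is the correct sign for cutting $R$ out rather than gluing it on as in Lemma~\ref{lem4-15}.
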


\begin{proof}
By Lemma \ref{lem3-18} and \ref{lem3-19}, we have the required result. 
\end{proof}

\begin{lemma}\label{lem3-18}
Let $P$ be a simple connected lattice polytope.  
Then, there exists a rectangle $R(v,w)$ $(v,w \in \mathrm{Ver}_0(P))$ contained in $P$. \end{lemma}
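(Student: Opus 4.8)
The plan is to exploit the rectilinear structure of a simple connected lattice polytope and to locate the desired rectangle at an extremal corner. Since $P$ is simple and connected, the region $P$ is a topological $2$-disk whose boundary $\partial P$ is an embedded closed polygonal curve all of whose edges are in the $x$- or $y$-direction, and all of whose $x$-components (resp.\ $y$-components) are distinct. Because the two directions alternate along $\partial P$, and horizontal edges are oriented from $\mathrm{Ver}_0(P)$ to $\mathrm{Ver}_1(P)$ while vertical edges are oriented from $\mathrm{Ver}_1(P)$ to $\mathrm{Ver}_0(P)$, the vertices of $\partial P$ alternate between $\mathrm{Ver}_0(P)$ and $\mathrm{Ver}_1(P)$. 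I may assume that $\partial P$ is positively oriented, so the interior lies to the left of the direction of travel; the opposite orientation is symmetric, with the roles of the northeast/southwest and southeast/northwest corners interchanged.

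The key tool I would establish first is a classification of the corners of $\partial P$ by local type. At each vertex exactly one horizontal and one vertical edge meet, and a vertex of $\mathrm{Ver}_0(P)$ is entered by a vertical edge and left by a horizontal edge (and conversely for $\mathrm{Ver}_1(P)$). Comparing the incoming and outgoing directions against the left turn (convex corner) or right turn (reflex corner) of a positively oriented boundary, a short case check shows: a convex vertex lies in $\mathrm{Ver}_0(P)$ if and only if the interior occupies its northeast or its southwest quadrant (a ``SW corner'' or a ``NE corner''), whereas the convex vertices of $\mathrm{Ver}_1(P)$ are exactly the ``SE'' and ``NW'' corners; dually, a reflex vertex lies in $\mathrm{Ver}_0(P)$ iff its exterior quadrant is NW or SE, and in $\mathrm{Ver}_1(P)$ iff its exterior quadrant is SW or NE. This classification is what links the combinatorial labels to the geometry of the rectangle I am hunting for.

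With this in hand I would start from an extremal corner. Let $e$ be the horizontal edge of $P$ of smallest $y$-component; it is unique and its interior lies above it, so both endpoints are convex, and by the classification its left endpoint $v$ is a SW corner, whence $v\in\mathrm{Ver}_0(P)$. I then grow an axis-parallel rectangle $[x(v),X]\times[y(v),Y]$ with fixed south-west corner $v$, enlarging it upward and to the right while it stays inside $P$. The aim is to steer this sweep so that it terminates with its opposite corner at a NE corner $w\in\mathrm{Ver}_0(P)$, which yields $R(v,w)\subseteq P$ with $v,w\in\mathrm{Ver}_0(P)$, as required.

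The delicate point, which I expect to be the main obstacle, is that a rectangle enlarged greedily to be maximal does \emph{not} in general have its far corner in $\mathrm{Ver}_0(P)$: by the classification, a rectangle wedged into a reflex corner meets it at a vertex whose exterior quadrant is NE, and such a vertex lies in $\mathrm{Ver}_1(P)$, not $\mathrm{Ver}_0(P)$. The sweep must therefore be controlled rather than greedy. Each time the top or right side of the growing rectangle first meets a reflex vertex of $\mathrm{Ver}_1(P)$ (an intruding notch of the exterior), I retract the corresponding side to the coordinate of that vertex and continue the sweep along the edge issuing from it; since each such event consumes a distinct reflex vertex and there are finitely many, the process terminates, and the classification forces the final blocking corner to be a convex NE vertex, hence an element of $\mathrm{Ver}_0(P)$. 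Checking that no other part of $\partial P$ intrudes into the rectangle produced at termination, and that the symmetric situation in which the growth is first capped on the left (by a convex NW vertex of $\mathrm{Ver}_1(P)$) is disposed of by the same retraction, is the technical heart of the argument, and packaging it cleanly is where most of the work lies. The same bookkeeping can equivalently be organized as an induction on the number of vertices of $P$, with the lowest edge $e$ furnishing the base reduction to a single rectangle.
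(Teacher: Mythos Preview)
Your approach is workable in spirit but considerably more elaborate than the paper's, and it contains a gap in the termination analysis. The paper's proof is a direct two-step slide from an \emph{arbitrary} $v\in\mathrm{Ver}_0(P)$: first go horizontally from $v$ along its incident edge $\overline{vv'}$ as far as possible inside $P$, reaching a point $u\in\partial P$; then move $u$ vertically to a point $u'$ with $R(v,u')\subset P$ at a height where the horizontal line through $u'$ contains a horizontal edge $\overline{ww'}$ of $\partial P$ (uniqueness of this edge uses that $y$-components of horizontal edges are distinct). The $\mathrm{Ver}_0$-endpoint $w$ of that edge is the desired second corner. No extremal choice of $v$, no corner-type classification, and no iterated retraction are needed; the distinctness hypothesis on edge levels does all the work.

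The specific gap in your argument is the claim that the sweep must terminate at a \emph{convex} NE corner. Consider a staircase-shaped $P$ that has a reflex vertex $p$ with exterior quadrant NW; by your own classification such a $p$ lies in $\mathrm{Ver}_0(P)$, not $\mathrm{Ver}_1(P)$, so your retraction rule (triggered only on reflex vertices of $\mathrm{Ver}_1(P)$) does not fire there. Concretely, take the polygon with vertices $(0,0),(6,0),(6,2),(4,2),(4,5),(2,5),(2,3),(0,3)$: starting from $v=(0,0)$ and growing as you describe, the first retraction (at the reflex $\mathrm{Ver}_1$-vertex $(4,2)$) brings the rectangle to $[0,4]\times[0,2]$; growing upward then stops at $Y=3$ with the blocking vertex $(2,3)$, which is reflex with exterior NW and lies in $\mathrm{Ver}_0(P)$. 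The final corner is not a convex NE vertex. The conclusion $w\in\mathrm{Ver}_0(P)$ happens to survive here, but your stated reason for it does not; repairing the scheme means allowing reflex $\mathrm{Ver}_0$-vertices with exterior NW or SE as terminal corners too, at which point you are essentially reproving what the paper's two-slide argument obtains directly.
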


For a vertex $v \in \mathrm{Ver}_0(P)$, we denote by $v'$ the vertex of $\mathrm{Ver}_1(P)$ such that the edge $\overline{vv'}$ is in the $x$-direction. Recall that 
$\mathbf{e}_1=(1,0)$ and $\mathbf{e}_2=(0,1)$, 
the standard basis of the $xy$-plane $\mathbb{R}^2$. 

\begin{proof}
For $v \in \mathrm{Ver}_0(P)$, take a point $u \in \partial P$ such that $u=v+x \mathbf{e}_1$ for some $x$ and $\overline{v u} \cap \overline{vv'} \neq \{v\}$ and $\overline{v u} \subset P$. This point $u$ is unique. Then, take  
another point $u'=u+y\mathbf{e}_2$ for some $y$ such that $R(v, u') \subset P$ and $(u'+\mathbb{R}\mathbf{e}_1) \cap \partial P$ consists of edges of $\partial P$. Note that there may be several choice of $u'$. Fix $u'$. Then, 
since an edge of $P$ in the $x$-direction with a fixed $x$-component is unique, $(u'+\mathbb{R}\mathbf{e}_1) \cap \partial P=\overline{ww'}$ for some unique $w\in \mathrm{Ver}_0(P)$ and $w' \in \mathrm{Ver}_1(P)$. 
Note that when $u'$ is a vertex of $P$, then $u' \in \mathrm{Ver}_0(P)$ and $w=u'$. 
By construction, $R(v,w)$ is the required rectangle. 
For $v$, we can construct another rectangle $R(v,z)$ in a similar way by first taking an interval in the $y$-direction and then making it fat in the $x$-direction. See Figure \ref{fig4-12}. 
\end{proof}

\begin{figure}[ht]
\centering
\includegraphics*[height=4cm]{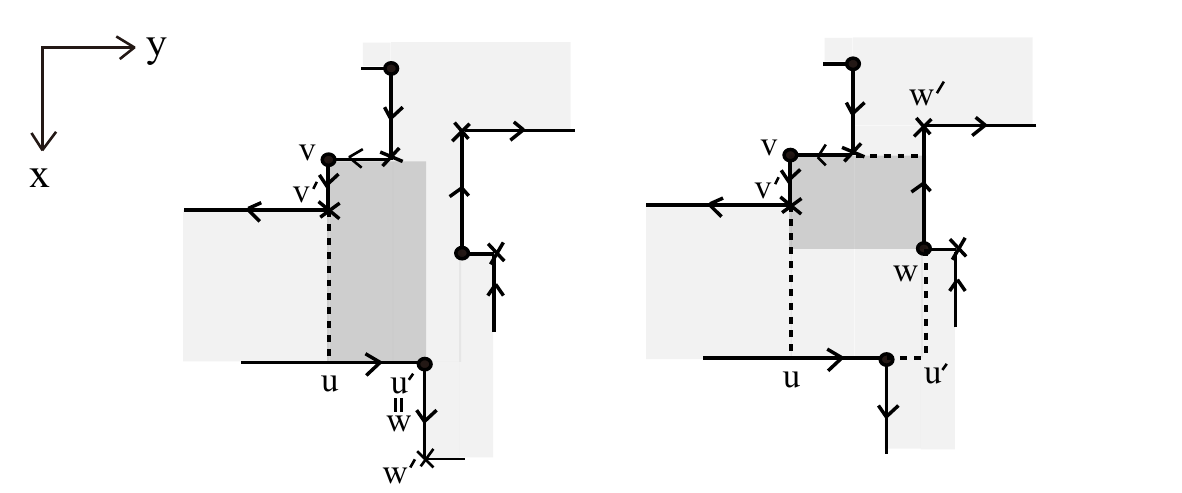}
\caption{The vertices $v, v', u, u', w$ and $w'$, where the vertices of $\mathrm{Ver}_0(P)$ (respectively $\mathrm{Ver}_1(P)$) are indicated by black circles (respectively X marks).}
\label{fig4-12}
\end{figure}

\begin{lemma}\label{lem3-19}
Let $P$ be a connected simple lattice polytope and let $R(v,w)$ be the rectangle constructed in Lemma \ref{lem3-18}. 
Then, the result of transformation $t(P, R(v, w))$ is a simple lattice polytope with region $P\backslash R(v,w)$.  
\end{lemma}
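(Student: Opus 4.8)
The plan is to verify directly that the set of vertices $t(P,R(v,w))$ together with $\mathrm{Ver}_1(P)$ bounds the region $P\backslash R(v,w)$, and that this region is a 2-disk (hence simple). Recall from Lemma \ref{lem3-18} that $R(v,w)\subset P$ and that the rectangle sits in a corner of $P$: the edge $\overline{vv'}$ is an edge of $\partial P$, the point $u=v+x\mathbf{e}_1$ lies on $\partial P$, and the opposite edge $\overline{ww'}$ lies on $\partial P$. The first step is to describe precisely how $\partial R(v,w)$ meets $\partial P$. I expect the intersection $\partial R(v,w)\cap\partial P$ to consist of the full edge $\overline{vv'}$ (in the $y$-direction), the full edge $\overline{ww'}$ (in the $x$-direction at level $u'$), and possibly a portion of the $x$-edge through $v$; the remaining two sides of the rectangle lie in the interior of $P$ by the choice of $u'$ guaranteeing $R(v,u')\subset P$.

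First I would record that the transformation $t(P,R(v,w))$ removes $v,w$ from $\mathrm{Ver}_0(P)$ and inserts the two new vertices $\tilde v=(x(w),y(v))$ and $\tilde w=(x(v),y(w))$, exactly as in the definition of a transformation by a rectangle. Since $P$ is connected and simple, the situation is the planar analogue of the Case~(1)/Case~(2) analysis carried out in the proof of Lemma \ref{lem4-15}, but applied on the $\mathrm{Ver}_0$ side rather than the $\mathrm{Ver}_1$ side. The orientations of the edges of $\partial P$ along the shared portion $I=\partial R(v,w)\cap\partial P$ are opposite to the orientations that $R(v,w)$ induces there (edges in the $x$-direction run \emph{into} a $\mathrm{Ver}_0$ vertex, while the rectangle's edges run \emph{out}), so along $I$ the two boundaries cancel. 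Removing the closed rectangle $R(v,w)$ from the disk $P$ therefore deletes a boundary-incident rectangle, and the new boundary is precisely the symmetric difference $\partial(P\backslash R(v,w))$, which is carried by the vertex set $t(\mathrm{Ver}_0(P),R(v,w))\cup\mathrm{Ver}_1(P)$.

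Next I would check the two topological conditions for $P\backslash R(v,w)$ to be a simple lattice polytope: that its edges are all in the $x$- or $y$-direction (immediate, since both $\partial P$ and $\partial R(v,w)$ have only axis-parallel edges, and no new directions are created by the cancellation), and that the resulting face is homeomorphic to a 2-disk. The disk property follows because $R(v,w)$ is a closed topological disk meeting the disk $P$ in the connected arc $I\subset\partial P$; excising a boundary-collar disk from a 2-disk yields a 2-disk. I would also confirm condition (2) of the lattice-polytope definition, that the $x$-components (resp. $y$-components) of the new $x$-edges (resp. $y$-edges) remain distinct: the only new $x$-components introduced are those of $\tilde v,\tilde w$, which coincide with the already-present components $x(w),x(v)$, and the transformation preserves the underlying multiset of $x,y$-components, so distinctness on each side is inherited.

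\textbf{The main obstacle} will be the careful case analysis of the intersection $I=\partial R(v,w)\cap\partial P$, since $u'$ may or may not be a vertex of $P$ (the construction in Lemma \ref{lem3-18} explicitly allows $u'\in\mathrm{Ver}_0(P)$ with $w=u'$, as well as $u'$ lying in the interior of an edge). In the degenerate subcase $w=u'$ the two new vertices and the old vertex $w$ may coincide or sit on a common edge, and one must verify that the cancellation along $I$ still produces a single embedded disk rather than creating a pinch point, a spurious crossing, or a violation of the distinctness condition. I would handle this by treating the generic subcase first and then checking that each degeneration merely shortens or merges edges without altering the disk type, appealing to the connectedness and simplicity of $P$ to rule out the rectangle touching $\partial P$ along two disjoint arcs.
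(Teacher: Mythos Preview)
There is a genuine gap in your topological picture. You assert that $I=\partial R(v,w)\cap\partial P$ is a single connected arc, and you plan to ``rule out the rectangle touching $\partial P$ along two disjoint arcs'' by appealing to connectedness and simplicity of $P$. This cannot be ruled out: the two-interval case genuinely occurs. The paper's proof shows precisely that $\partial R\cap\partial P$ is either all of $\partial R$, one interval, or \emph{two} intervals (one containing $v$ and one containing $w$). In the two-interval case, excising $R$ from the disk $P$ does not yield a single disk; it splits $P$ into two disjoint simple lattice polytopes $P_1$ and $P_2$. So your conclusion ``excising a boundary-collar disk from a 2-disk yields a 2-disk'' is false here, and the result $t(P,R(v,w))$ need not be connected. (This is exactly why Claim~(a) in the proof of Theorem~\ref{thm3-10} has to allow $Q'$ to be disconnected.)

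The paper handles this by ordering the vertices of $\mathrm{Ver}_0(P)$ and $\mathrm{Ver}_1(P)$ cyclically along $\partial P$ as $v_1,v_1',\ldots,v_n,v_n'$, writing $v=v_i$ and $w=v_j$, and then explicitly listing the vertex sets of the two pieces $P_1$ and $P_2$. The key step---and the place where simplicity of $P$ is actually used---is an orientation argument showing that if $x(w)<x(w')$ then necessarily $w=u'$ and $\tilde v=u$, which forces $\overline{\tilde v w}\subset\partial P$; otherwise $\overline{w\tilde v}\cap\overline{ww'}$ gives an interval at $w$. This is what guarantees $\partial R\cap\partial P$ contains intervals at both $v$ and $w$, hence $P_1\cap P_2=\emptyset$. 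Your orientation-cancellation idea from Lemma~\ref{lem4-15} is in the right spirit, but you need to replace the single-disk conclusion with this two-piece decomposition and verify disjointness of the pieces. (Minor point: $\overline{vv'}$ is in the $x$-direction, not the $y$-direction.)
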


\begin{proof}
Put $R=R(v,w)$. 
Let $u$ be a point in $\partial P$ as in the proof of Lemma \ref{lem3-18}. 
First we see that $\partial R \cap \partial P$ consists of $\partial R=\partial P$, one interval, or two intervals. Since $v,w$ are vertices of $P$ and an edge of $P$ in the $x$-direction (respectively $y$-direction) with a fixed $x$-component (respectively $y$-component) is unique, $\partial R \cap \partial P$ consists of the union of points $v$, $w$, and intervals containing $v$ or $w$. Hence it suffices to show that there are intervals in $\partial R \cap \partial P$ containing $v$ and $w$. 
Assume $x(v)<x(v')$.  Put the other pair of diagonal points of $R(v,w)$ by $\tilde{v}=(x(w), y(v))$ and $\tilde{w}=(x(v), y(w))$.  

Recall that we give an orientation of $P$ by giving each edge in the $x$-direction (respectively in the $y$-direction) the orientation from a vertex of $\mathrm{Ver}_0(P)$ to a vertex of $\mathrm{Ver}_1(P)$ (respectively from a vertex of $\mathrm{Ver}_1(P)$ to a vertex of $\mathrm{Ver}_0(P)$), and $\partial P$ has a coherent orientation as an immersion of a circle. 
Now, consider a point moving continuously in an interval. 
When a point $p$ passes a point $q$ from one direction and passes $q$ again for the second time, $p$ comes back from the other direction. Since $P$ is simple, this implies with the assumption $x(v)<x(v')$, that the situation in which both $x(w)<x(w')$ and $x(w)<x(u)$ do not occur simultaneously; thus, if $x(w)<x(w')$, then $x(w)=x(u)=x(u')$, hence $w=u'$. Thus, by construction, we see that if $x(w)<x(w')$, then $\tilde{v}=u$. 
 
When $x(w)<x(w')$, then $\tilde{v}=u$ and $w=u'$, hence $\overline{uu'}=\overline{\tilde{v} w}$ is an interval in $\partial R \cap \partial P$ containing $w$. 
When $x(w)>x(w')$, by construction, $x(w)\geq x(\tilde{w})$, hence $\overline{w\tilde{w}}\cap \overline{ww'}$ is an interval in $\partial R \cap \partial P$ containing $w$. Further, in both cases,  $\overline{vv'}$ is an interval in $\partial R \cap \partial P$ containing $v$. 
Thus $\partial R \cap \partial P$ consists of $\partial R=\partial P$, one interval, or two intervals. 
  
Since $\partial P$ has an orientation, we denote the vertices of $\mathrm{Ver}_0(P)$ and $\mathrm{Ver}_1(P)$ by $v_1,\ldots, v_n$ and $v'_1, \ldots, v_n'$ such that $\overline{v_j v_j'}$ $(j=1,\ldots, n)$ is an edge in the $x$-direction and the vertices  appear by $v_1, v_1', v_2, v_2' \ldots, v_n, v_n'$ on $\partial P$ with respect to the orientation.  
In the above argument, we assume that $v=v_i$ and $w=v_j$ with $i<j$.

Then, let $P_1$ and $P_2$ be lattice polytopes determined by vertices 
\[
v_1, v_1', \ldots, v_{i-1}, v'_{i-1}, \tilde{v}_j, v_j', v_{j+1}, \ldots, v_n, v_n'\]
 and 
 \[
 \tilde{v}_i, v_i', v_{i+1}, v_{i+1}', \ldots, v_{j-1}, v_{j-1}',\]
  respectively such that 
\begin{eqnarray*}
&&\mathrm{Ver}_0(P_1)=\{ v_1, \ldots, v_{i-1}, \tilde{v}_j, v_{j+1},   \ldots, v_n \}, \\
&& \mathrm{Ver}_1(P_1)=\{v_1', v'_{i-1},  v_j',  \ldots, v_n'\}, \\
&&\mathrm{Ver}_0(P_2)=\{ \tilde{v}_i, v_{i+1}, \ldots, v_{j-1} \},\\
 && \mathrm{Ver}_1(P_2)=\{v_i', v_{i+1}', \ldots, v_{j-1}'\},  
 \end{eqnarray*}
see Figure \ref{fig4-13}. 
 Since both $P$ and $R=R(v_i,v_j)$ are simple and connected, $P_1$ and $P_2$ are simple connected lattice polytopes. 
Since $\partial R \cap \partial P$ consists of either $\partial R=\partial P$, one interval, 
or two intervals containing $v$ and $w$, 
$P_1 \cap P_2=\emptyset$. Note that each of $P_1$ and $P_2$ consists of one isolated vertex (respectively one of $P_1$ and $P_2$ consists of one isolated vertex) when $\partial R=\partial P$ (respectively $\partial R \cap \partial P$ consists of one interval). 
Hence the region of $P_1 \cup P_2$ is formed by $P\backslash R(v,w)$, and we have the required result. 
\end{proof}

\begin{figure}[ht]\centering
\includegraphics*[height=4cm]{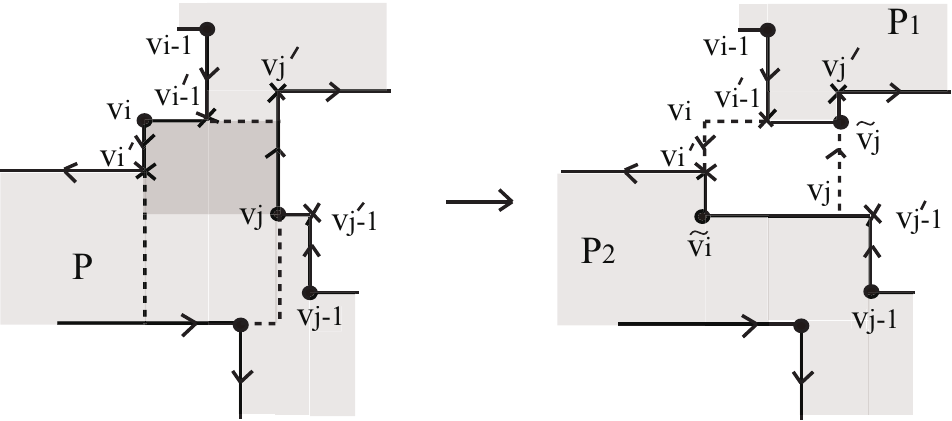}
\caption{Lattice polytopes $P$, $P_1$ and $P_2$, where the vertices of $\mathrm{Ver}_0(Q)$ (respectively $\mathrm{Ver}_1(Q)$), $Q=P, P_1, P_2$, are indicated by black circles (respectively X marks).}
\label{fig4-13}
\end{figure}

Let $\Delta=\Delta_0 \to \Delta_1 \to \cdots \to \Delta_k=\Delta'$ be a transformation with $\Delta_j=t(\Delta_{j-1}, R_j\cup (-R_j^*))$ for a rectangle $R_j=R_j(v_j, w_j)$ $(j=1,2,\ldots,k)$. 
For a lattice polytope $P$ associated with $\Delta$ and $\Delta'$, put $V_0=\mathrm{Ver}_0(P)$. 
We define $V_j$ inductively by $V_{j}=t(V_{j-1}, R_j(v_j, w_j))$ $(j=1,2,\ldots,k)$, 
if the diagonal vertices $v_{j}, w_{j}$ of $R_{j}(v_{j}, w_{j})$ satisfy $v_{j}, w_{j} \in V_{j-1}$. Note that $\Delta_j=V_j \cup (-V_j^*)$ if $V_j$ can be defined, and $V_k=\mathrm{Ver}_1(P)$. 
Then we have the following. 

\begin{lemma}\label{lem3-14}
If  $P$ and $-P^*$ are disjoint and $P$ satisfies the condition $1$ or $2$ of Theorem \ref{thm3-10} and further the area of the transformation $\Delta \to \Delta'$ is minimal, then $V_j$ can be defined for all $j=1,\ldots,k$.  
\end{lemma}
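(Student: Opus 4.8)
The plan is to show that a minimal-area transformation never uses a \emph{mixed} rectangle, i.e. a rectangle $R_j=R(v_j,w_j)$ with one diagonal vertex in $V_{j-1}$ and the other in $-V_{j-1}^*$; such a rectangle is precisely the obstruction to defining $V_j$. So I would argue by contradiction: suppose some rectangle is mixed and let $j_0$ be the smallest such index. For $j<j_0$ every rectangle is pure (both vertices in $V_{j-1}$, after replacing $R_j$ by $-R_j^*$ if necessary), so $V_0,\dots,V_{j_0-1}$ are defined, and it suffices to derive a contradiction from the existence of a mixed $R_{j_0}$.

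First I would convert minimality into an equality. By Theorem \ref{thm3-10} the minimal value of $\sum_j|\mathrm{Area}(R_j)|$ is $\mathrm{Area}|P|$, so our transformation satisfies $\sum_j|\mathrm{Area}(R_j)|=\mathrm{Area}|P|$. Writing $S_j=R_j\cup(-R_j^*)$, the identity $\mathrm{Area}(-R_j^*)=\mathrm{Area}(R_j)$ from the proof of Theorem \ref{thm3-10}, together with the fact that $R_j$ and $-R_j^*$ carry rotation numbers of the same sign, gives $\mathrm{Area}|S_j|=2|\mathrm{Area}(R_j)|$ even when $R_j$ and $-R_j^*$ overlap. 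By Lemma \ref{lem4-15} the rotation-number functions $\omega_{S_j}$ sum to that of $P\cup(-P^*)$, so the pointwise triangle inequality yields
\[
\sum_j\mathrm{Area}|S_j|\ \ge\ \mathrm{Area}|P\cup(-P^*)|=2\,\mathrm{Area}|P|,
\]
the last equality because $P$ and $-P^*$ are disjoint. Since the left-hand side equals $2\sum_j|\mathrm{Area}(R_j)|=2\,\mathrm{Area}|P|$, the inequality is an equality; hence at almost every point of $\mathbb{R}^2$ all the nonzero summands $\omega_{S_j}$ share a single sign (no cancellation).

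Next I would locate the line $x=y$ in the rotation-number picture. Because $P$ and $-P^*$ are disjoint, Proposition \ref{prop2-3} shows the presented chord diagram has no separated arcs, so, after possibly interchanging $P$ and $-P^*$, every vertex of $P$ lies in $C_+$; as all edges join vertices of $C_+$, the region $P$ lies in $C_+$, $-P^*$ lies in $C_-$, and $\partial(P\cup(-P^*))$ is disjoint from $x=y$. Thus the line $x=y$ lies entirely in regions of rotation number $0$. Moreover a pure move $R(v,w)$ with $v,w\in C_+$ presents non-separated arcs, so $R(v,w)\subset C_+$ by Proposition \ref{prop2-3}, whence its new vertices $\tilde v,\tilde w$ again lie in $C_+$; by induction $V_0,\dots,V_{j_0-1}\subset C_+$ and $-V_0^*,\dots,-V_{j_0-1}^*\subset C_-$. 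Now the mixed rectangle $R_{j_0}=R(v,w)$ has $v\in V_{j_0-1}\subset C_+$ and $w\in -V_{j_0-1}^*\subset C_-$, so it straddles $x=y$, meeting that line in a segment interior to $R_{j_0}$. At an interior point $p$ of this segment, reflection in $x=y$ fixes $p$ and carries $R_{j_0}$ to $-R_{j_0}^*$, so $p$ is interior to both; hence $\omega_{S_{j_0}}(p)=\omega_{R_{j_0}}(p)+\omega_{-R_{j_0}^*}(p)=\pm2\neq0$, while $\omega_{P\cup(-P^*)}(p)=0$. The no-cancellation conclusion then forces $\omega_{S_j}(p)=0$ for all $j$, contradicting $\omega_{S_{j_0}}(p)\neq0$. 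Therefore no mixed rectangle occurs and $V_j$ is defined for every $j$.

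The main obstacle I expect is the geometric input of the third paragraph: that the disjointness hypothesis places $P$ in $C_+$ with $x=y$ sitting in the rotation-number-$0$ region, and that every pure move up to step $j_0$ preserves containment in $C_+$. This is what turns the analytic no-cancellation statement into a concrete contradiction at a point of the diagonal. A secondary point requiring care is verifying that $\mathrm{Area}|S_j|=2|\mathrm{Area}(R_j)|$ survives the self-overlap of $R_j$ and $-R_j^*$, so that the minimality hypothesis is genuinely the equality case of the triangle inequality used above.
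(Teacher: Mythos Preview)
Your analytic backbone is correct and is the interesting part of the attempt: under the lemma's hypotheses minimality gives $\sum_j\mathrm{Area}|S_j|=\mathrm{Area}|P\cup(-P^*)|$, hence pointwise equality $\sum_j|\omega_{S_j}|=|\omega_{P\cup(-P^*)}|$, and your computation $\omega_{S_j}(p)=2\omega_{R_j}(p)$ for $p$ on the diagonal then shows that no $R_j$ can cross $x=y$. The difficulties are entirely in the geometric paragraph.

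Both invocations of Proposition~\ref{prop2-3} are misapplied. That proposition characterizes separatedness of \emph{two specific arcs} via the rectangle $R(v_1,v_2)$ on their representing points; it says nothing about the polytope $P$, so ``$P$ and $-P^*$ disjoint'' does not yield ``no separated arcs'', and the sentence ``a pure move with $v,w\in C_+$ presents non-separated arcs, so $R(v,w)\subset C_+$'' is circular --- by Proposition~\ref{prop2-3} these two assertions are equivalent, and neither is given. The second use can be repaired from what you already proved (every $R_j$ lies on one side of the diagonal), but the first cannot: disjointness of $P$ and $-P^*$ only forces each \emph{connected component} of $\partial P$ to one side of $x=y$. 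When $P$ has components in both $C_+$ and $C_-$ --- which the choice in Definition~\ref{def3} permits, e.g.\ $P$ consisting of a square $[1,2]\times[3,4]\subset C_+$ together with a square $[7,8]\times[5,6]\subset C_-$, with $-P^*$ the two mirror squares --- no global interchange puts $P$ into $C_+$. In that situation $V_0$ and $-V_0^*$ each meet both half-planes, and a mixed rectangle with both diagonal vertices in $C_+$ need not cross $x=y$, so your contradiction does not fire.

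The paper's argument is different and avoids this issue. It uses (via Lemma~\ref{lem4-15} and the proof of Theorem~\ref{thm3-10}) that in a minimal-area transformation each rectangle lies inside the region of $P\cup(-P^*)$; since $P$ and $-P^*$ are disjoint, a connected rectangle then lies in a single component of $P$ or of $-P^*$, so a mixed $R_1$ (with $v_1\in\partial P$ and $w_1\in\partial(-P^*)$) is impossible, and one inducts. In fact your no-cancellation equality already delivers this stronger containment: $\omega_{S_j}(p)=0$ whenever $\omega_{P\cup(-P^*)}(p)=0$, and since $\omega_{S_j}=\omega_{R_j}+\omega_{R_j}\circ(\,\cdot\,)^*$ with both summands of the same sign, this forces $\mathrm{int}(R_j)\subset\{\omega_{P\cup(-P^*)}\neq0\}$. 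Replacing the single line $x=y$ by the full complement of the region of $P\cup(-P^*)$ would close the gap and bring your argument in line with the paper's.
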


\begin{proof}
Assume that the area is minimal. 
By the proof of Theorem \ref{thm3-10}, the area is minimal when the used rectangles can be divided to several sets such that each set of rectangles form each simple lattice polytope or a set of simple lattice polytopes which bound a region obtained from a 2-disk $D^2$ by removing several mutually disjoint disks in the interior of $D^2$. Thus the vertices of each rectangle are contained in one of such regions. Since $P$ and $-P^*$ are disjoint, each of these regions is contained in either the region of $P$ or the region of $-P^*$. If $v_1 \in V_{0}=\mathrm{Ver}_0(P)$ and $w_1 \in -V_{0}^*$, then $R(v_1, w_1)$, together with other rectangles, forms a region which contains a vertex $v_1$ of $P$ and a vertex $w_1$ of $-P^*$. Since the components of $P$ and $-P^*$ are distinct, this is a contradiction, and hence we can assume that $v_1, w_1 \in V_0$ and we have $V_1$. Since the area of $\Delta_0 \to \Delta_1 \to \cdots \to  \Delta_k$ is minimal, the area of the transformation $\Delta _1=V_1 \cup (-V_1^*) \to \cdots \to \Delta_k$ is also minimal. Hence, by repeating the same argument, we see that $V_j$ can be defined for all $j=1,\ldots,k$.  
\end{proof}

\begin{proof}[Proof of Theorem \ref{thm-red}]
We consider local deformations (I)--(IV) given in Definition \ref{def-graph}, but instead of deformations (III) and (IV), which contain one vertex, we consider (III) as the local move with one or two vertices in the arc whose closure is a circle, and (IV) as the local move with two vertices on the closed arc. Then, by the proof of Theorem \ref{thm3-10}, we see that all possible transformations which realize the minimal area are in the corresponding graphs presented by deformations (I)--(IV). 

From a transformation of a lattice polytope by rectangles with minimal area, we obtain a sequence of graphs related by deformations (I)--(IV). Thus, the reduced graph of a lattice polytope $P$ is an empty graph if there exists a transformation of $P$ with the minimal area. Conversely, since all possible transformations which realize the minimal area are presented by deformations (I)--(IV), if there does not exist a transformation of $P$ with the minimal area, then the reduced graph is not empty. Thus we have the required result.
\end{proof}

\section{Simple lattice polygons}\label{sec5}
In this section, we consider simple lattice polytopes with one component, which we call simple {\it lattice polygons}. 
 By the proof of Proposition \ref{lem3-15}, we have the following. 
 
\begin{proposition}\label{prop3-20}
Let $P$ be a simple lattice polygon and let $R(v,w)$ $(v,w \in \mathrm{Ver}_0(P))$ be a rectangle contained in $P$. 
Then, the result of transformation $t(P, R(v, w))$ is a simple lattice polygon with region $P\backslash R(v,w)$ if and only if $\partial R(v,w)$ contains an interval of $\partial P$.  
\end{proposition}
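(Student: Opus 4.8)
The plan is to prove the biconditional in Proposition \ref{prop3-20} by analyzing the intersection $\partial R(v,w) \cap \partial P$ directly, reusing the machinery developed in Lemma \ref{lem3-19}. Since $v,w \in \mathrm{Ver}_0(P)$ and, by condition (2) of the definition of a lattice polytope, an edge of $P$ in the $x$-direction (respectively $y$-direction) with a fixed $x$-component (respectively $y$-component) is unique, the intersection $\partial R(v,w) \cap \partial P$ can only consist of the points $v$, $w$ together with (possibly degenerate) intervals of $\partial R$ containing $v$ or $w$. The key quantity to track is how many such intervals occur and whether they survive the cancellation of oppositely-oriented edges.

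First I would establish the forward direction by contraposition: if $\partial R(v,w)$ contains \emph{no} interval of $\partial P$, then $\partial R \cap \partial P = \{v,w\}$ (two isolated points). In that case, as in the proof of Lemma \ref{lem4-15}, the only edges of $\partial R$ meeting $\partial P$ do so transversally at $v$ and $w$, and when forming the region $P \backslash R(v,w)$ no edge cancellation occurs to glue $\partial P$ and $\partial R$ into a single coherent immersed circle. Concretely, removing $R$ from $P$ and replacing the diagonal pair $v,w$ by $\tilde v,\tilde w$ then produces a boundary that either fails to be embedded (the result is not simple) or splits off a spurious closed component, so $t(P, R(v,w))$ is not a simple lattice polygon with region $P \backslash R(v,w)$. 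I would make this precise by the same orientation bookkeeping used in Lemma \ref{lem3-19}: edges of $\partial P$ in the $x$-direction are oriented toward $v$ or $w$ while those of $\partial R$ in the $x$-direction are oriented away, and vice versa in the $y$-direction, so a shared interval is needed to produce the orientation-reversing cancellation that merges the two boundaries into one embedded circle.

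For the converse direction, I would assume $\partial R(v,w)$ contains an interval $I$ of $\partial P$ and argue that the cancellation of $I$ (viewed with opposite orientations in $\partial P$ and $\partial R$) fuses $\partial R$ and $\partial P$ into the boundary of a region homeomorphic to a 2-disk with $\partial(P \backslash R)$ embedded. This is exactly the computation carried out in Lemma \ref{lem3-19}, where the presence of an interval containing $v$ (namely $\overline{vv'}$) and an interval containing $w$ guaranteed that $t(P, R(v,w))$ decomposes as a disjoint union of simple connected pieces with region $P \backslash R(v,w)$; in the one-component (lattice polygon) setting this forces a single simple polygon. Here I would only need a single shared interval rather than the two that appeared in the general lemma, so I would verify that one interval already suffices for $P \backslash R(v,w)$ to have embedded, coherently oriented boundary, bounding a disk since $P$ is simple.

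The main obstacle I anticipate is the bookkeeping in the forward (only-if) direction: showing cleanly that the \emph{absence} of a shared interval genuinely obstructs simplicity, rather than merely changing the combinatorial type. The subtlety is that $v$ and $w$ can still lie on $\partial P$ as isolated intersection points, so I must rule out the possibility that transverse contact at two points alone yields a simple polygon with the correct region. I expect to resolve this by a parity/orientation argument in the spirit of Lemma \ref{lem3-19}: tracking a point moving along $\partial P$ and using simplicity (the boundary is an embedded circle bounding a disk) to show that without a cancelling interval the replacement $v,w \mapsto \tilde v, \tilde w$ either introduces a self-intersection or disconnects the boundary, in either case failing to give a simple lattice polygon with region $P \backslash R(v,w)$.
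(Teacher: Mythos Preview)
Your approach is essentially the same as the paper's: both directions are handled by reusing the $P_1, P_2$ decomposition from Lemma \ref{lem3-19}. However, the paper resolves your anticipated ``main obstacle'' much more cheaply than the orientation/parity argument you sketch. For the contrapositive direction, once you know $\partial R(v,w)\cap \partial P=\{v,w\}$, you already have the decomposition $t(P,R(v,w))=P_1\cup P_2$ from Lemma \ref{lem3-19} in hand; the paper simply observes that in this case $P_1\cap P_2=R(v,w)$ (rather than $\emptyset$), so the region of $t(P,R(v,w))$ is not $P\setminus R(v,w)$. No new tracking of a moving point or self-intersection analysis is needed.

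For the converse, the paper also takes a shorter route than redoing the one-interval computation: it observes that if $\partial R(v,w)$ contains an interval of $\partial P$, then $R(v,w)$ is precisely one of the rectangles produced by the construction in Lemma \ref{lem3-18}, so Lemma \ref{lem3-19} applies verbatim. Your plan to verify that a single shared interval suffices would work, but it duplicates effort already contained in those lemmas.
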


\begin{proof}
If $\partial R(v, w)$ $(v,w \in \mathrm{Ver}_0(P))$ contains an interval of $\partial P$, then 
$R(v,w)$ is a rectangle constructed by the way shown in Lemma \ref{lem3-18}, and it follows from Lemma \ref{lem3-19} that $t(P, R(v, w))$ is a simple lattice polygon with region $P\backslash R(v,w)$. 

If $\partial R(v,w)$ does not contain an interval of $\partial P$, then $R(v,w) \cap \partial P=\{v,w\}$.  Then, the result $t(P, R(v, w))$, which consists of  
 the lattice polytopes $P_1$ and $P_2$ in the proof of \ref{lem3-19}, satisfies 
 $P_1 \cap P_2=R(v,w)$, hence the region of $t(P, R(v, w))$ is not the region of $P\backslash R(v,w)$. Thus we have the required result. 
 \end{proof}
 
 By Corollary \ref{cor4-12}, 
in particular, we have the following. 
\begin{corollary}\label{cor5-2}
Let $\Delta, \Delta'$ be two lattice presentations of partial matchings such that a lattice polytope $P$ associated with $\Delta, \Delta'$ is a simple lattice polygon. Let $n$ be half the number of the non-isolated vertices of $P$. Then, a division of $P$ into $n$ rectangles $R_1,\ldots, R_n$ presents a transformation $\Delta \to \Delta'$ with minimal area, where $R_1, \ldots, R_n$ satisfy the condition that they induce a transformation of $P$. 

In particular, if $P$ and $-P^*$ are disjoint, then any transformation $\Delta \to \Delta'$ with minimal area is presented by such a division of $P$; further, the transformations consist of those which, as chord diagrams, changes nesting arcs to crossing arcs and vice versa. 
\end{corollary}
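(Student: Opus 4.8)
The plan is to specialize the machinery already assembled in Corollary \ref{cor4-12}, using Proposition \ref{prop3-20} to pin down exactly which rectangle removals keep a simple polygon. First I would record the structural observation that a simple lattice polygon automatically satisfies condition $(1)$ of Theorem \ref{thm3-10}: since $P$ is simple and connected, $\partial P$ is a single embedded circle, so the unique bounded region $A$ has rotation number $\omega(A)=\epsilon\in\{+1,-1\}$ and the unbounded region has rotation number $0$. Consequently $\mathrm{Area}|P|=|\mathrm{Area}(P)|$ is the ordinary area enclosed by $\partial P$, and Theorem \ref{thm3-10} tells us this is the minimal area of a transformation and that it is attained.

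For the first assertion I would build the division explicitly by iterating the peeling of Proposition \ref{lem3-15}. Starting from $P_0=P$, Proposition \ref{lem3-15} produces a rectangle $R_1=R(v,w)$ with $v,w\in\mathrm{Ver}_0(P_0)$, contained in $P_0$, such that $t(P_0,R_1)$ is again a simple lattice polytope with region $P_0\setminus R_1$; by Proposition \ref{prop3-20} this is precisely the case in which $\partial R_1$ contains an interval of $\partial P_0$. Setting $P_1=t(P_0,R_1)$ and repeating, each step removes a rectangle from a simple polygon and leaves a simple polygon, so after finitely many steps the remaining region is a single rectangle; this realizes the division of $P$ into the rectangles $R_1,\dots,R_n$ of the statement. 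Because at stage $j$ we remove $R_j$ from the simple polygon $P_{j-1}$, additivity of area gives $|\mathrm{Area}(R_j)|+|\mathrm{Area}(P_j)|=|\mathrm{Area}(P_{j-1})|$, and telescoping yields $\sum_j|\mathrm{Area}(R_j)|=|\mathrm{Area}(P)|=\mathrm{Area}|P|$. By construction each $R_j$ has both diagonal vertices in the current initial-vertex set, so the $R_j$ induce a transformation of $P$, and the pairs $R_j\cup(-R_j^*)$ present the desired minimal transformation $\Delta\to\Delta'$.

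For the converse, assuming $P$ and $-P^*$ are disjoint, I would invoke Corollary \ref{cor4-12}: any minimal-area transformation $\Delta\to\Delta'$ is described by a transformation of $P$, and by Lemma \ref{lem3-14} the intermediate initial-vertex sets $V_j$ are all defined, so each step is a genuine rectangle transformation $t(P_{j-1},R_j)$ with $v_j,w_j\in\mathrm{Ver}_0(P_{j-1})$. The point is then to show each such removal must preserve simplicity. By Lemma \ref{lem4-15} the used rectangles fill regions whose boundary is $\partial P$, and since the total $\sum_j|\mathrm{Area}(R_j)|$ equals $\mathrm{Area}|P|=|\mathrm{Area}(P)|$, the $R_j$ must tile the disk $P$ with multiplicity one, without overlap. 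If some $R_j$ failed to leave a simple polygon with region $P_{j-1}\setminus R_j$, then by Proposition \ref{prop3-20} we would have $R_j\cap\partial P_{j-1}=\{v_j,w_j\}$, so $R_j$ would be doubly covered and some region would acquire $|\omega|\ge 2$, forcing $\sum_j|\mathrm{Area}(R_j)|>\mathrm{Area}|P|$ and contradicting minimality. Hence every $R_j$ meets the criterion of Proposition \ref{prop3-20}, so the $R_j$ genuinely divide $P$ into rectangles. The final clause, that these transformations change nesting arcs to crossing arcs and vice versa, is immediate from the corresponding statement in Corollary \ref{cor4-12}.

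The main obstacle I anticipate is exactly this converse step: converting ``minimal total area'' into ``the rectangles tile $P$ without overlap,'' and hence into the local dichotomy of Proposition \ref{prop3-20}. The subtlety is that a priori a minimal transformation might pass through disconnected or non-simple intermediate polytopes; the argument above excludes this by combining the rigidity of the rotation numbers of a simple polygon (every interior region has $\omega=\epsilon$) with Proposition \ref{prop3-20}'s description of how a rectangle can meet the boundary. Everything else is bookkeeping with Theorem \ref{thm3-10}, Corollary \ref{cor4-12}, Lemma \ref{lem3-14} and Lemma \ref{lem4-15}.
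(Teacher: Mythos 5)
Your route is essentially the paper's: the existence half is the iterated peeling of Proposition \ref{lem3-15} (which is precisely the content of Lemmas \ref{lem3-18} and \ref{lem3-19}, the same decomposition the paper's induction uses), and the converse half rests on Corollary \ref{cor4-12} together with Lemmas \ref{lem3-14} and \ref{lem4-15}. In fact your converse is worked out in more detail than the paper's, which dismisses that direction with the phrase ``together with Corollary \ref{cor4-12}'': your observation that minimality plus Lemma \ref{lem4-15} forces the rectangles to tile $P$ with multiplicity one, so that Proposition \ref{prop3-20} excludes a rectangle meeting $\partial P_{j-1}$ only in $\{v_j,w_j\}$, is a sound filling-in of that step.

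The genuine gap is the count. The corollary does not merely assert that some division of $P$ presents a minimal transformation; it asserts a division into exactly $n$ rectangles, $n$ being half the number of non-isolated vertices of $P$, and your proof never verifies that the peeling terminates after $n$ steps --- it says ``after finitely many steps'' and then names the output $R_1,\dots,R_n$. That bookkeeping is the entire written content of the paper's proof, an induction on $n$ using the vertex count of $P_1\cup P_2$ from Lemma \ref{lem3-19}. Two further points. First, your iteration needs component bookkeeping in any case, since $t(P_{j-1},R_j)$ can be a disjoint union of two simple polygons (or a polygon plus an isolated vertex), so the process does not in general end at ``a single rectangle''; the paper's $P_1,P_2$ splitting handles exactly this. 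Second, if you actually carry out the count, each peel leaves a total of $n$ initial vertices distributed over $P_1\cup P_2$ (isolated vertices absorbing the finished ones), and the recursion $f(\text{rectangle})=1$, $f(\text{isolated vertex})=0$, $f(P)=1+f(P_1)+f(P_2)$ returns $n-1$ rectangles, not $n$: a rectangle has four non-isolated vertices, so $n=2$, yet is a division by one rectangle, and an L-shaped hexagon has $n=3$ yet divides into two. This clashes with the statement and with the paper's own base case ``when $n=1$, then $P=R_1$,'' which is consistent only with a different reading of $n$; there is an off-by-one discrepancy in the source itself. A complete argument should either prove the corrected count $n-1$ (your peeling sets up exactly the right induction to do so) or flag the discrepancy, rather than silently asserting the division has $n$ pieces.
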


\begin{definition}
In the situation of Corollary \ref{cor5-2}, 
we describe a transformation by a division of $P$ into $n$ rectangles $R_1,\ldots, R_n$, where $R_1, \ldots, R_n$ satisfy the condition that they induce a transformation of $P$, 
and assigning each rectangle $R_j$ with the label $j$ ($j=1,\ldots,n$). 
We call such a division of $P$  
the {\it division of a simple lattice polygon $P$} presenting a transformation; see Figure \ref{fig5-1}. 
\end{definition}

\begin{figure} 
\centering
\includegraphics*[height=5cm]{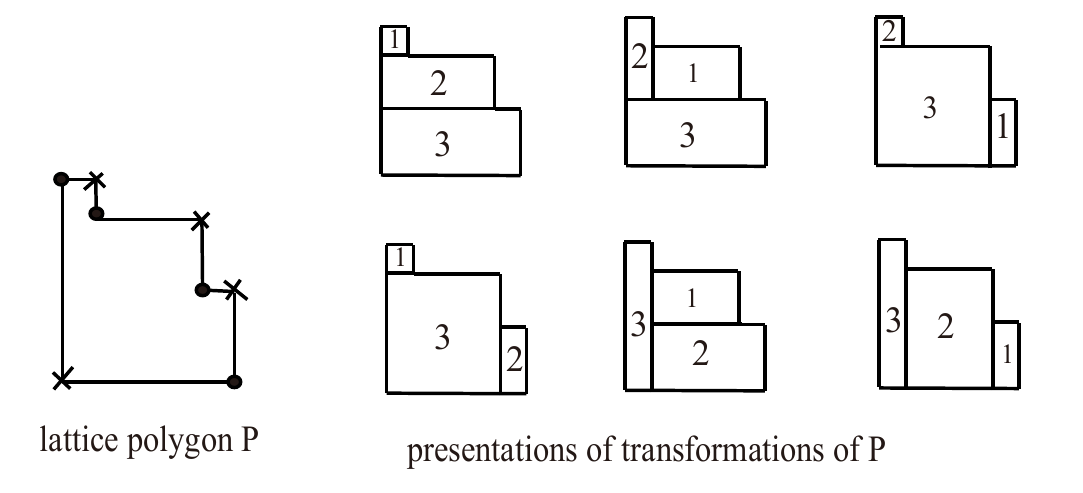}
\caption{Division of a simple lattice polygon $P$ presenting a transformation.}
\label{fig5-1}
\end{figure}
  
\begin{proof}[Proof of Corollary \ref{cor5-2}]
When $n=1$, then $P=R_1$, which is a division by one rectangle. Assume that a simple lattice polygon with $2(n-1)$ vertices is divided by $n-1$ rectangles. By the proof of Lemma \ref{lem3-19}, $P$ is divided to $R \cup P_1 \cup P_2$ for a rectangular $R$ and simple lattice polygons $P_1$ and $P_2$ with $P_1 \cap P_2=\emptyset$ such that the sum of the numbers of vertices of $P_1$ and $P_2$ is $2n$. Then, the number of vertices of $P_1$ and that of $P_2$ is equal to or less than $2(n-1)$, and hence, by assumption, $P$ is divided to $n$ rectangles. Thus, by induction on $n$, together with Corollary \ref{cor4-12}, we have the required result. 
\end{proof}
 
\section*{Acknowledgements}
The author would like to thank Professors Sigeo Ihara and Hiroki Kodama for their helpful comments. 
The author was supported by iBMath through the fund for Platform Project for Supporting in Drug Discovery and Life Science Research (Platform for Dynamic Approaches to Living System) from Japan Agency for Medical Research and Development (AMED), and JSPS KAKENHI Grant Numbers 15K17532 and 15H05740.


\begin{thebibliography}{999}
 
\bibitem{Barvinok2}
Barvinok, A. {\em Integer Points in Polyhedra}. Zurich Lectures in Advanced Mathematics, European Mathematical Society, 2008.


\bibitem{Birman}
Birman, J. S. {\em Braids, Links and Mapping Class Groups}; Annals of Mathematics Studies, Princeton University Press, 1975.

\bibitem{CDDSY}
Chen, W. Y. C.; Deng, E. Y. P.; Du, R. R. X.; Stanley, R. P.; Yan, C. H. Crossing and nesting of matchings and partitions. {\em Trans. Amer. Math. Soc.} {\bf 2007}, 359,  1555--1575. 

\bibitem{Diestel}
Diestel, R. {\em Graph Theory}; Graduate Texts in Mathematics 173, American Mathematical Society, Springer, 2010. 


\bibitem{JQR}
Jin, E. Y.; Qin, J.; Reidys, C. M. Combinatorics of RNA structures with pseudoknots. {\em Bull. Math. Biol.} {\bf 2008}, 70, 45--67. 

\bibitem{JR}
Jin, E. Y; Reidys, C. M. Asymptotic enumeration of RNA structures with pseudoknots.  {\em Bull. Math. Biol.} {\bf 2008}, 70, 951--970. 

\bibitem{PM}
Pipas, J. M.; McMahon, J. E. Method for predicting RNA secondary structure. {\em Proc. natn. Acad. Sci. U.S.A.} {\bf 72} 2017--2021.



 
\bibitem{Reidys} 
Reidys, C. M. {\em Combinatorial Computational Biology of RNA. Pseudoknots and neutral networks}; Springer, New York, 2011.

\bibitem{Tinoco-al}
Tinoco, I., Jr.; Uhlenbeck, O. C.; Levine, M. D. Estimation of secondary structure in ribonucleic acids. {\it Nature, Lond}. {\bf 230}, 362--367.

\bibitem{Zuker-Sankoff}
Zuker, M.; Sankoff, D. RNA secondary structures and their prediction. {\it Bulletin Math. Biol.} {\bf 1984}, 46, 591--621.
\end{thebibliography}
\end{document}